\numberwithin{equation}{section}
\newtheorem{theorem}{Theorem}[section]
\newtheorem{lemma}[theorem]{Lemma}
\newtheorem{proposition}[theorem]{Proposition}
\theoremstyle{definition}
\newtheorem{remark}[theorem]{Remark}
\theoremstyle{definition}
\newtheorem{definition}[theorem]{Definition}
\theoremstyle{definition}
\def\dashint{\operatorname%
{\,\,\text{\bf--}\kern-.98em\DOTSI\intop\ilimits@\!\!}}
\def\bH{\mathbb{H}}
\def\bR{\mathbb{R}}
\def\bZ{\mathbb{Z}}
\def\cA{\mathcal{A}}
\def\cF{\mathcal{F}}
\def\cH{\mathcal{H}}
\def\cM{\mathcal{M}}
\def\cS{\mathcal{S}}
\def\cT{\mathcal{T}}
\def\cI{\mathcal{I}}
\def\cX{\mathcal{X}}
\begin{document}
\title[Trace theorems for time-fractional equations]{Sobolev spaces and trace theorems for time-fractional evolution equations}

\author[D. Kim]{Doyoon Kim}
\address[D. Kim]{Department of Mathematics, Korea University, 145 Anam-ro, Seongbuk-gu, Seoul, 02841, Republic of Korea}
\email{\href{mailto:doyoon_kim@korea.ac.kr}{\nolinkurl{doyoon_kim@korea.ac.kr}}}

\author[K. Woo]{Kwan Woo}
\address[K. Woo]{Department of Mathematics, Korea University, 145 Anam-ro, Seongbuk-gu, Seoul, 02841, Republic of Korea}
\email{\href{mailto:wkwan@korea.ac.kr}{\nolinkurl{wkwan@korea.ac.kr}}}

\thanks{D. Kim and K. Woo were supported by the National Research Foundation of Korea (NRF) funded by the Korean government (MSIT) (2019R1A2C1084683).}

\subjclass[2020]{46E35, 35K15, 35B30, 35B65}

\keywords{Time fractional equation, Caputo fractional derivative, Trace theorem, Initial value problem, Sobolev space}

\begin{abstract}
We establish trace and extension theorems for evolutionary equations with the Caputo fractional derivatives in (weighted) $L_p$ spaces.
To achieve this, we identify weighted Sobolev and Besov spaces with mixed norms that accommodate solution spaces and their initial values well-suited for equations involving time-fractional derivatives.
Our analysis encompasses both time-fractional sub-diffusion and super-diffusion equations.
We also provide observations on the initial behavior of solutions to time-fractional equations.
\end{abstract}

\maketitle

\section{Introduction}
	\label{intro}
This paper focuses on the $ L_p$ theory of the initial value problem for evolution equations involving time-fractional derivatives.
A representative example of the time-fractional equations we consider is given by:
\begin{align}
	\label{eq230510_1}
    \begin{cases}
    \partial_t^{\alpha}u - \Delta u = 0 &\quad \text{in}\quad  {(0, T) \times \Omega}, \\
    u(0, \cdot) = u_0 (\cdot)&\quad \text{on}\quad  \Omega,
    \end{cases} 
\end{align}
where $\Delta$ can be replaced with $a^{ij}D_{ij}$ (non-divergence form) or $D_i(a^{ij}D_ju)$ (divergence form).
Here, $\partial_t^{\alpha}$ represents the Caputo fractional derivative of order $\alpha \in (0, 1)$, defined as
\[
\partial_t^{\alpha}u (t, x) = \frac{1}{\Gamma(1-\alpha)} \frac{\mathrm{d}}{\mathrm{d}t} \int_0^t (t-s)^{-\alpha} \left( u \left( s, x \right) - u_0\left( x \right) \right) \,\mathrm{d}s.
\]

Time-fractional equations as in \eqref{eq230510_1} have been widely recognized as a valuable approach for describing dynamics governed by anomalous diffusion, as discussed, for instance, in \cite{MR1809268}.
Thus, the mathematical analysis of such equations becomes essential.
The main goal of this paper is to establish an appropriate framework that can discuss initial value problems for equations of type \eqref{eq230510_1} in Sobolev spaces.
In particular, we require trace and extension theorems that identify the optimal function spaces for initial conditions.
To achieve this, we first explore various properties of $\partial_t^{\alpha}u$ as a function in the weighted mixed norm space $L_{p, q, w}$ and $\bH_{p, q, w}^{-1}$. See Section \ref{subsec_div} for the definition of $\bH_{p, q, w}^{-1}$.
Based on these properties, we define solution spaces and prove trace and extension theorems for these spaces.
Trace theorem provides a rigorous background for discussing the initial value of a solution in Sobolev spaces, while extension theorem ensures that the initial data space considered in the trace theorem is optimal.
Notably, it turns out that if the time regularity $\alpha$ is too small, we cannot define $u(0, \cdot)$ in $L_p$ sense adequately; see Remark \ref{rmk_alpha}.
This issue can be overcome by appropriately choosing $q$ (integrability with respect to the time variable) and temporal weights.
This is also why the mixed norm and weight should be considered; see Remark \ref{rmk_homogeneous} for a more detailed description. 
Note that if the order of time smoothness of solution $u$ is a positive integer (\textit{e.g.} $\alpha = 1$), then the precise regularity of $u(0, \cdot)$ is well-known in the literature, for example, see \cite{MR0503903}.

There has been a huge body of literature on time-fractional equations, and the following is a brief overview of the literature closely related to initial value problems in Sobolev-type spaces with fractional derivatives in the time variable.
In \cite{MR2125407} and \cite{MR2278673}, the authors obtained an unweighted and unmixed $L_q$-theory for the Volterra type equations in non-divergence form.
 The solution spaces used in their work are the vector-valued Bessel potential spaces $\bar{H}_q^{\alpha}$ (see Remark \ref{rmk_yamamoto}-(ii) for the definition of $\bar{H}_q^{\alpha}$). 
This theory is based on the operator theoretical approach and holds under the assumption that $\alpha > 1/q$.
For equations in divergence form, discussions on the $L_2$ theory within the Sobolev-Slobodetskii spaces can be found in \cite{MR4200127} and references therein (also see Remark \ref{rmk_yamamoto}-(i)).
A totally different approach was employed in \cite{MR4186022} to obtain weighted mixed norm estimates for solutions to \eqref{eq230510_1} under minimal assumptions on the coefficient $a^{ij} = a^{ij}(t, x)$. 
The focus in \cite{MR4186022} is the zero initial value problems, but it also addresses non-zero initial values (without trace theorem), which is related to the extension theorem for a specific class of initial values. 
Related results can be found in \cite{MR4242958} and \cite{MR4464544}.
Studies on traces of weighted vector-valued Sobolev-Slobodetskii as well as Bessel potential spaces can be found in \cite{MR2863860} and \cite{MR3273638}.
In a recent study \cite{MR4604539}, the authors established trace theorems for weighted Triebel-Lizorkin spaces, using temporal weights $t^{\mu}$ with the assumption that $\alpha > (1+\mu)/q$ and $\mu \in (-1, q-1)$.
The results presented in \cite{MR4604539} encompass many previous findings on trace theorems.
Additionally, in \cite[Section 6]{MR4604539}, the authors applied their results to derive trace estimates for solutions in Bessel potential spaces to various evolution equations, such as time-fractional diffusion equations.
Note that, except for \cite{MR4186022} (and \cite{MR4242958, MR4464544}), the aforementioned papers use the intersection of two Banach space-valued function spaces as a solution space to evolution equations in the form
$\cA^{\alpha,p}(J;X_0) \cap L^p(J;X_1)$,
where $\cA^{s,p}$ represents the Sobolev-Slobodetskii space, the Bessel
potential space, or the Triebel-Lizorkin space, and $(X_0,X_1)$ is a pair of Banach spaces.
In particular, when $J = (0,T)$, for instance, the Triebel-Lizorkin space $\cA^{\alpha,p}(J) = F_{p,q}^\alpha(J)$ is given by a restriction of $F_{p,q}^\alpha(\bR)$.
This restriction is unavoidable when defining function spaces on a finite time interval $J = (0, T)$ because the definition of $F^\alpha_{p,q}$ requires the whole space, i.e., $\bR$, especially when $\alpha$ is non-integer.

The results presented in this paper exhibit several methodological differences compared to previous studies on traces of Sobolev-type spaces with time-fractional regularity and initial value problems for time-fractional evolution equations.
For example, the literature \cite{MR1238939, MR2125407, MR2278673, MR4604539} makes use of semigroup and operator theory to address maximal $L_p$ regularity for time-fractional equations, which is suitable when regarding solutions to the equations as Banach space-valued functions.
In this approach, the operator $\partial_t^{\alpha}$ can be interpreted as $(\partial_t)^{\alpha}$ in a semigroup sense, and various properties of the domain of $(\partial_t)^{\alpha}$, such as embedding and interpolation results, can be utilized.
In contrast, we directly construct solution spaces (see Definition \ref{def0102_01}) for time-fractional evolution equations on a given time interval $(0, T)$ using fractional calculus.
We view our solutions as functions of $(t,x)$ defined on the domain $(0,T) \times \Omega \subset \bR^{d+1}$.
Our constructed spaces align well with the approach used in  \cite{MR3581300, MR3899965, MR4186022, MR4387945, MR4345837, MR4464544} for obtaining $L_{p}$ estimates of solutions to time-fractional parabolic equations (with rough coefficients).
In particular, if equations in divergence form contain singular drifts as in \cite{MR4387945}, it becomes necessary to consider the case $\partial_t^{\alpha}u -\Delta u = D_ig_i + \sum_{k=1}^m f_k$ with $g_i \in L_p$ and $f_k \in L_{p_k}$ ($p_k < p$).
For such cases, to use an operator theoretical approach, one may need to answer if solution spaces can be represented by an intersection of two Banach space-valued function spaces, especially when the spatial boundary is irregular.
Our construction easily applies to this case.
Furthermore, the trace theorems (see Theorems \ref{thm0111_01} and \ref{trace0110_1}) for the constructed spaces are proved using fundamental tools such as integration by parts and Hardy's inequality.
Thus, our approach can be seen as an alternative for researchers who prefer to work with function spaces for time-fractional equations without relying on semigroup and operator theory (an alternative approach for solution spaces related to those in this paper).
We not only concentrate on trace and extension theorems but also provide comprehensive explanations of the initial behavior of a solution, depending on the relationship between $\alpha$, $q$, and temporal weights.
See remarks in Section \ref{ftn_space}.
Furthermore, unlike the Bessel potential spaces $\bar{H}_q^{\alpha}(0, T)$, where the initial value of functions cannot be well-defined if $\alpha \le 1/q$, as illustrated by H\"ormander's example in \cite{MR1044427}, the initial value $u(0, x)$ of a solution $u(t, x)$ is well-defined even when $\alpha = 1/q$ (or $\alpha = (1+\mu)/q$ for the weighted case) in our setting.
It is worth mentioning that \cite{MR4604539} focuses on trace theorems (without extension theorems) for anisotropic weighted Tribel-Lizorkin spaces, and our trace theorems when $T \to \infty$ correspond to a special case of the trace results mentioned in \cite[Theorem 1.2]{MR4604539}.
Regarding extension theorems (see Theorems \ref{thm_subdiff} and \ref{thm_supdiff}), if the time regularity is $\alpha \in (0, 1)$, there are some results in \cite{MR3273638}, but they differ from Theorem \ref{thm_subdiff} in this paper because the function spaces for the spatial variables in \cite{MR3273638} are different from ours.
Note that we also prove Theorem \ref{thm_supdiff} for the case where the time-regularity is $1 + \alpha \in (1, 2)$.

Let us present one special case of the main results in this paper.
Our solution spaces for the time-fractional equations in non-divergence form are $\bH_{p, q, w}^{ \alpha, 2}$ introduced in Definition \ref{def0102_01}.
We prove that the weighted Besov spaces $B_{p, q, w_2}^{2\theta}$ (Definition \ref{def_besov}) are the exact function spaces for initial values for the time-fractional equations in $\bH_{p, q, w}^{ \alpha, 2}$.
More precisely, if $\theta = 1 - (1+\mu)/(q\alpha) \in (0, 1)$ (in particular, $\alpha > (1+\mu)/q$), we prove that the following trace operator is bounded (trace theorem)
\[
\cT: \bH_{p, q, w}^{ \alpha, 2} \to B_{p, q, w_2}^{2\theta}, \quad \textrm{where} \quad \cT u(t, x) := u(0, x)
\]
and prove that $B_{p, q, w_2}^{2\theta}$ is the smallest space satisfying the above (extension theorem).
As a key step in proving the trace theorem, we take an appropriate integral representation of $u \in \bH_{p, q, w}^{ \alpha, 2}$ with the help of a certain type of mollification $u^{(\varepsilon)}$, which is also used to obtain Sobolev inequality \cite{MR4387945, MR4345837} and regularity of solutions on the boundary of the spatial domain (i.e., lateral trace) \cite{arXiv:2105.05131}. 
If there is no spatial weight, i.e., $w_2(x) = 1$, we also give an alternative proof for the trace theorem (see Remark \ref{rmk_unweighted}).
For the extension theorem, we solve a homogeneous equation with a non-zero initial value $u_0 \in B_{p, q, w_2}^{2\theta}$.
To this end, we derive precise time decay estimates for the fundamental solution $P_{\alpha}$ of the time-fractional heat equation.
We also obtain trace estimates of $\partial_t^{k+\alpha}u$ (non-negative integer $k$), and solve the initial value problem for super-diffusion equation $\partial_t^{1+\alpha}u - \Delta u = 0$.

The remainder of the paper is organized as follows.
In Section \ref{ftn_space}, we introduce spaces with fractional regularity in time and their basic properties.
In Section \ref{sol_space}, we introduce solution spaces and initial trace spaces for the time-fractional equations and state main theorems; Theorems \ref{thm0111_01}, \ref{cor0111_01} (trace theorems) and Theorems \ref{thm_subdiff} and \ref{thm_supdiff} (extension theorems).
Then we prove the main theorems in Section \ref{sec_trace}.
Also, in Section \ref{sec_trace}, we present another version of the trace theorem (Theorem \ref{thm230416}).

In this paper, $\bR^d$ stands for the $d$-dimensional Euclidean space and $x = (x_1, \ldots, x_d)$ denotes a point in $\bR^d$.
We set $\bR = \bR^1$ and $\bR_+ = (0, \infty)$.
For functions $u = u(t, x)$, we use the following standard notation
\[
D_i = \partial / \partial x_i, \quad Du = (D_1u, \ldots, D_d u).
\]
For a multi-index $\gamma = (\gamma_1, \ldots, \gamma_d)$ where $\gamma_i$'s are non-negative integers, we denote
\[
D^{\gamma} = D_1^{\gamma_1} \ldots D_d^{\gamma_d}, \quad |\gamma| = \gamma_1 + \ldots + \gamma_d.
\]
By $\partial_t^n u$, we mean a partial derivative of order $n$ with respect to $t$. 
Frequently, we use $u_t$ instead of $\partial_tu$.
For a domain $\Omega \subset \bR^d$ and $(0, T) \times \Omega \subset \bR^{d+1}$, we set 
\begin{itemize}
\item $\cS = \cS(\bR^d)$: the space of Schwartz functions.

\item $\cS' = \cS'(\bR^d)$: the space of tempered distributions.

\item $C_0^{\infty}(\Omega)$: the collection of all infinitely differentiable functions with compact support in $\Omega$.

\item $C_0^{\infty}\left( \left( 0, T\right) \times \Omega \right)$: the collection of all infinitely differentiable functions with compact support in $\Omega_T$.

\item $C_0^\infty([0, T] \times \Omega )$: the collection of all infinitely differentiable functions defined on $[0, T] \times \Omega$ having support in an intersection of a ball in $\bR^{d+1}$ with $[0, T] \times \Omega$.

\end{itemize}
We similarly define, for instance, $C_0^{\infty}\left( [0, T ) \times \Omega \right)$ and $C_0^{\infty}( \overline{ \left( 0, T\right) \times \Omega} )$.
By $\cF$ and $\cF^{-1}$, we denote the Fourier and the inverse Fourier transform in $\bR^d$;
\[
\cF f  := \frac{1}{(2\pi)^{d/2}} \int_{\bR^d} e^{- i x\cdot \xi}f(x)\,\mathrm{d}x, \quad \cF^{-1} f := \frac{1}{(2\pi)^{d/2}} \int_{\bR^d} e^{i x\cdot \xi}f(x)\,\mathrm{d}x,
\]
where we sometimes use $\hat{f}$ instead of $\cF f$.
For $a, b \in \bR$, we set $a \wedge b := \min\{a, b\}$ and $a \vee b := \max\{a, b \}$.
If we write $N = N(A, B, \ldots)$, this means that the constant $N$ depends only on $A, B, \ldots$.
Finally, for $a, b \in \bR$, we write $a \simeq_{\nu, \ldots} b$ if there exists a constant $N = N(\nu, \ldots) > 0$ such that $N^{-1}a \le b \le Na$.

\section{Function spaces with fractional regularity in time}
\label{ftn_space}

This section presents preliminary work on defining Sobolev-type solution spaces for time-fractional evolution equations to be discussed in Section \ref{sol_space}. One of the solution spaces defined in Section \ref{sol_space} is:
\[
\bH_p^{\alpha,2} = \{u \in L_{p}: u \in \bH_{p}^{\alpha,0}, Du, D^2 u \in L_{p} \},
\]
where the norm is given by
\[
\| u \|_{\bH_{p}^{\alpha,2}} = \left\| \left|u\right| + \left|Du\right| + \left|D^2u\right| + |\partial_t^{\alpha} u | \right\|_{L_{p}}. 
\]
To construct such spaces, in this section, we focus on studying the fractional derivative $\partial_t^\alpha$ in the context of $L_p$ spaces and examining its properties.
Here is a summary of the contents in this section:

\begin{itemize}
\item We start by defining function spaces with fractional regularity in time: $\bH_{p, 0}^{\alpha,0}$ and $\bH_{p}^{\alpha,0}$ for the non-divergence case (see Definition \ref{def0513_1}), $\cH_{p, 0}^{\alpha, -1}$ and $\cH_{p}^{\alpha, -1}$ for the divergence case (see Definition \ref{def0513_6}).

\item Next, we investigate properties related to the initial values of $\bH_{p, 0}^{\alpha,0}$ and $\bH_{p}^{\alpha,0}$ (respectively, $\cH_{p, 0}^{\alpha, -1}$ and $\cH_{p}^{\alpha, -1}$) that depend on the relationship between $\alpha$ and $p$. These properties are discussed in Lemmas \ref{lem0513_1} and \ref{lem0513_2} (respectively, Lemmas \ref{lem04131517_1} and \ref{lem0526_1}).

\item Finally, we define $\partial_t^{\alpha}u$ and the norms of $\bH_{p}^{\alpha,0}$ and $\cH_{p}^{\alpha, -1}$ using the construction outlined above (see Definitions \ref{def_0223_1} and \ref{def_0223_6}).

\item Additionally, we present essential properties of the fractional derivative $\partial_t^\alpha$ in the remaining lemmas, propositions, and remarks throughout this section.
\end{itemize}

For $\alpha \in (0, 1)$, we denote
\[
I^\alpha f(t,x) = \frac{1}{\Gamma(\alpha)} \int_0^t (t-s)^{\alpha-1} f(s,x) \, \mathrm{d}s, \quad I^1f(t, x) = \int_0^t f(s, x)\,\mathrm{d}s,
\]
and $I^n f = I^1\left( I^{n-1}f \right) $ for $n = 2, 3, \cdots$. We set
\begin{equation}
							\label{eq0516_04}
J^\alpha \varphi(t,x) = \frac{1}{\Gamma(\alpha)} \int_t^T (r-t)^{\alpha-1} \varphi(r,x) \, \mathrm{d}r.
\end{equation}

For $p \in (1,  \infty)$ and a positive integer $d$, we let $A_p(\bR^d, \mathrm{d}x) = A_p(\bR^d)$ be the set of all locally integrable non-negative functions $w$ on $\bR^d$ such that
$$
[ w ]_{A_p} := \sup_{x_0 \in \bR^d, r > 0} \left(\dashint_{B_r(x_0)}w(x)\, \mathrm{d} x \right)
\left(\dashint_{B_r(x_0)}\left( w \left( x \right) \right)^{-1/(p-1)}\,\mathrm{d} x \right)^{p-1}<\infty,
$$
where $B_r(x_0) = \{x \in \bR^d : |x - x_0| < r \}$.

Let $p, q \in (1, \infty)$, $\Omega \subset \bR^d$ be a domain and $\Omega_T := (0, T) \times \Omega$ for $T \in (0, \infty)$.
Note that throughout the paper, we assume that $T$ is a positive real number unless otherwise specified (for instance, $T$ can be $\infty$ in Lemma \ref{lem0224}).
For $w(t, x) = w_1(t)w_2(x) = t^{\mu}w_2(x)$, where $(t, x) \in \bR \times \bR^d$, $\mu \in (-1, q-1)$ and $w_2 \in A_p(\bR^d)$, we set $L_{p, q, w}(\Omega_T)$ to be the set of all measurable functions $f$ defined on $\Omega_T$ such that
$$
\|f\|_{L_{p, q, w}}(\Omega_T) := \left( \int_0^T \left( \int_{\Omega} |f(t, x)|^p w_2(x)\,\mathrm{d}x \right)^{q/p} w_1(t)\,\mathrm{d}t \right)^{1/q} < \infty.
$$
We set $L_{p, q, w}(\Omega_T) = L_p(\Omega_T)$ if $p = q$ and $w = 1$, i.e., $\mu = 0$ and $w_2(x)=1$.
We similarly define $L_{p, w_2}(\Omega)$ and $L_{q, w_1}(0, T)$.
One can use $w_2 \in A_p(\Omega)$ instead of $w_2 \in A_p(\bR^d)$ if the domain $\Omega$ is a space of homogeneous type. See \cite[Section 2]{MR3812104} for more details.

Before presenting function spaces incorporating fractional derivatives, we introduce a lemma on the inequality of the Hardy type, which serves as a useful tool throughout the paper.

\begin{lemma}
\label{lem0224}
Let $\alpha \in (0, 1]$, $q \in (1, \infty)$, and $\mu < q-1$. Then for any function $f$ defined on $(0, T)$ with $T \in (0, \infty]$, we have
\begin{equation}
\label{eq0224_03}
\int_0^{T} | t^{-\alpha}I^{\alpha}f |^q t^{\mu}\,\mathrm{d}t \le N  \int_0^{T} |f |^q t^{\mu}\,\mathrm{d}t,
\end{equation}
where $N = N(\alpha, q, \mu) > 0$.
\end{lemma}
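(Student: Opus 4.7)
The plan is to reduce this Hardy-type inequality to a one-dimensional convolution bound via Minkowski's integral inequality, after a scaling substitution that turns $t^{-\alpha}I^{\alpha}f(t)$ into an average of dilates of $f$ against an integrable kernel on $(0,1)$.

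First, I would apply the change of variables $s = t\tau$ in the definition of $I^{\alpha}f(t)$, which gives
\begin{equation*}
t^{-\alpha}I^{\alpha}f(t) \;=\; \frac{1}{\Gamma(\alpha)} \int_0^1 (1-\tau)^{\alpha-1} f(t\tau) \,\mathrm{d}\tau.
\end{equation*}
This representation is valid for $\alpha \in (0,1]$ (with the kernel $(1-\tau)^{\alpha-1}$ replaced by $1$ when $\alpha = 1$), and it is scale-invariant in the sense that the $t$-dependence now lives entirely inside $f(t\tau)$.

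Second, I would apply Minkowski's integral inequality in $L_q((0,T), t^\mu\,\mathrm{d}t)$ to interchange the order of the $q$-th power integration in $t$ and the integration in $\tau$:
\begin{equation*}
\left( \int_0^T \bigl| t^{-\alpha} I^{\alpha} f(t) \bigr|^q t^\mu\,\mathrm{d}t \right)^{1/q}
\;\le\; \frac{1}{\Gamma(\alpha)} \int_0^1 (1-\tau)^{\alpha-1} \left( \int_0^T |f(t\tau)|^q t^\mu\,\mathrm{d}t \right)^{1/q} \mathrm{d}\tau.
\end{equation*}
For each fixed $\tau \in (0,1)$, the substitution $r = t\tau$ in the inner integral gives
\begin{equation*}
\int_0^T |f(t\tau)|^q t^\mu\,\mathrm{d}t \;=\; \tau^{-\mu-1}\int_0^{T\tau}|f(r)|^q r^\mu\,\mathrm{d}r \;\le\; \tau^{-\mu-1}\int_0^{T}|f(r)|^q r^\mu\,\mathrm{d}r,
\end{equation*}
where the last inequality uses $T\tau \le T$ (here we also allow $T = \infty$).

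Putting these together, I would obtain the bound
\begin{equation*}
\left( \int_0^T \bigl| t^{-\alpha} I^{\alpha} f \bigr|^q t^\mu\,\mathrm{d}t \right)^{1/q}
\;\le\; \frac{1}{\Gamma(\alpha)} \left( \int_0^1 (1-\tau)^{\alpha-1} \tau^{-(\mu+1)/q}\,\mathrm{d}\tau \right) \left( \int_0^T |f|^q r^\mu\,\mathrm{d}r \right)^{1/q},
\end{equation*}
and the remaining work is to check that the constant is finite. The $\tau$-integral is exactly the Beta function $B\bigl(1-(\mu+1)/q,\alpha\bigr)$, which converges precisely when $\alpha > 0$ and $(\mu+1)/q < 1$; the latter is equivalent to $\mu < q-1$, which is the standing hypothesis. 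Thus $N$ depends only on $\alpha, q, \mu$, and the proof is complete. The only subtle point is making sure Minkowski can be applied with the kernel $(1-\tau)^{\alpha-1}\tau^{-(\mu+1)/q}$ (which is integrable on $(0,1)$ exactly under the given conditions), so there is essentially no obstacle beyond the bookkeeping of exponents.
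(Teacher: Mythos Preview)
Your proof is correct and self-contained. The paper, by contrast, does not give a direct argument: it simply observes that the case $\alpha=1$ is the classical Hardy inequality and that the case $\alpha\in(0,1)$ follows from Hardy--Littlewood's fractional-integral inequality (Theorem~10 of \cite{MR1544927}) by replacing $f$ with $f\,1_{(0,T)}$. Your route---the scaling substitution $s=t\tau$ followed by Minkowski's integral inequality and the identification of the constant as $B\bigl(1-(\mu+1)/q,\alpha\bigr)/\Gamma(\alpha)$---is more elementary and has the advantage of producing an explicit constant while making the role of the hypothesis $\mu<q-1$ completely transparent; the paper's citation hides both. The two approaches are equivalent in strength for this lemma, but yours is the one a reader could verify without consulting the reference.
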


\begin{proof}
Note that when $\alpha = 1$, \eqref{eq0224_03} is a version of classical Hardy's inequality. If $\alpha \in (0, 1)$, it is a direct consequence of \cite[Theorem 10]{MR1544927} by replacing $f$ with $f1_{(0, T)}$ in \cite[Theorem 10]{MR1544927}. 
\end{proof}

Now we introduce function spaces with fractional derivatives.

\subsection{Non-divergence case}
	\label{subsec_nondiv}

We present a set of definitions and properties for time fractional derivatives that are associated with time-fractional evolution equations in non-divergence form.

\begin{definition}
							\label{def0513_1}
Let $\alpha \in (0, 1)$, $p, q \in (1, \infty)$ and $w(t, x) = w_1(t)w_2(x) = t^{\mu}w_2(x)$, where $\mu \in (-1, q-1)$ and $w_2 \in A_p(\bR^d)$.
We define $\bH_{p, q, w, 0}^{\alpha,0}(\Omega_T)$ and $\bH_{p, q, w}^{\alpha,0}(\Omega_T)$ as follows.

\begin{enumerate}[(i)]
\item By $u \in \bH_{p, q, w, 0}^{\alpha,0}(\Omega_T)$, we mean that $u \in L_{p, q, w}(\Omega_T)$ and there exists $f \in L_{p, q, w}(\Omega_T)$ such that
\begin{equation}
							\label{eq0513_01}
\int_{\Omega_T}I^{1-\alpha} u \, \varphi_t \, \mathrm{d}x \, \mathrm{d}t = - \int_{\Omega_T} f \, \varphi \, \mathrm{d}x \, \mathrm{d}t
\end{equation}
for all $\varphi \in C_0^\infty\left([0,T) \times \Omega\right)$. 
Clearly, $\partial_t I^{1-\alpha} u = f$.
When $p=q$ and $w=1$, we set $\bH_{p, q, w, 0}^{\alpha,0}(\Omega_T) = \bH_{p, 0}^{\alpha,0}(\Omega_T)$.

\item By $u \in \bH_{p, q, w}^{\alpha,0}(\Omega_T)$, we mean that $u \in L_{p, q, w}(\Omega_T)$ and there exists $u_0 \in L_{p, w_2}(\Omega)$ such that
\[
u-u_0 \in \bH_{p, q, w, 0}^{\alpha,0}(\Omega_T).
\]
We again suppress $q$ and $w$ if $p=q$ and $w=1$ (i.e., $\mu = 0$ with $w_2(x) = 1$).
\end{enumerate}
In our notation, the superscript $0$ in $\bH_{p, q, w, 0}^{\alpha,0}(\Omega_T)$ and $\bH_{p, q, w}^{\alpha,0}(\Omega_T)$ indicates that no regularity of $u$ is required with resect to the spatial variables.
On the other hand, the subscript $0$ in $\bH_{p, q, w, 0}^{\alpha,0}(\Omega_T)$ signifies that the initial value of each element of $\bH_{p, q, w, 0}^{\alpha,0}(\Omega_T)$ is zero, whenever it is well-defined (see Lemma \ref{lem0513_2}, Theorem \ref{thm0111_01}, and Theorem\ref{cor0111_01}).
\end{definition}

The norms for $\bH_{p, q, w, 0}^{\alpha,0}(\Omega_T)$ and $\bH_{p, q, w}^{\alpha,0}(\Omega_T)$ will be introduced in Definition \ref{def_0223_1}.

\begin{lemma}
							\label{lem0513_1}
Let $\alpha \in (0,(1+\mu)/q)$ and $u_0 \in L_{p, w_2}(\Omega)$.
Then, $u_0 \in \bH_{p, q, w, 0}^{\alpha,0}(\Omega_T)$ as a function on $\Omega_T$.
Thus, $\bH_{p, q, w, 0}^{\alpha,0}(\Omega_T) = \bH_{p, q, w}^{\alpha,0}(\Omega_T)$ for $\alpha \in (0,(1+\mu)/q)$.
\end{lemma}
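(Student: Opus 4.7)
The plan is to verify the two requirements of Definition \ref{def0513_1}(i) directly for the constant-in-time function $u_0(x)$, and then deduce the identity $\bH_{p, q, w, 0}^{\alpha,0}(\Omega_T) = \bH_{p, q, w}^{\alpha,0}(\Omega_T)$ as a trivial consequence.

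First I would check membership in $L_{p, q, w}(\Omega_T)$. Because $u_0$ is independent of $t$,
\[
\|u_0\|_{L_{p, q, w}(\Omega_T)} = \|u_0\|_{L_{p, w_2}(\Omega)} \Bigl(\int_0^T t^\mu \, \mathrm{d}t\Bigr)^{1/q},
\]
which is finite since $\mu > -1$ and $T < \infty$. Next I would compute $I^{1-\alpha}u_0$ explicitly: a direct substitution gives
\[
I^{1-\alpha}u_0(t,x) = \frac{u_0(x)\, t^{1-\alpha}}{\Gamma(2-\alpha)},
\]
so the natural candidate for $f$ in \eqref{eq0513_01} is $f(t,x) := u_0(x)\, t^{-\alpha}/\Gamma(1-\alpha)$. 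Its norm is
\[
\|f\|_{L_{p, q, w}(\Omega_T)}^q = \Gamma(1-\alpha)^{-q} \|u_0\|_{L_{p, w_2}(\Omega)}^q \int_0^T t^{-\alpha q + \mu}\, \mathrm{d}t,
\]
and the time integral converges near $t=0$ precisely when $-\alpha q + \mu > -1$, i.e.\ $\alpha < (1+\mu)/q$, which is exactly the hypothesis.

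It then remains to verify the integration-by-parts identity \eqref{eq0513_01} for every $\varphi \in C_0^\infty([0,T) \times \Omega)$. By Fubini (justified since $u_0 \varphi_t$ is bounded in $x$ and $t^{1-\alpha}$ is integrable on $(0,T)$), this reduces to the one-dimensional identity
\[
\int_0^T \frac{t^{1-\alpha}}{\Gamma(2-\alpha)} \varphi_t(t,x)\, \mathrm{d}t = -\int_0^T \frac{t^{-\alpha}}{\Gamma(1-\alpha)} \varphi(t,x)\, \mathrm{d}t,
\]
which is obtained by integration by parts on $(0,T)$: the boundary term at $t=T$ vanishes because $\varphi$ has compact support in $[0,T)$, while the boundary term at $t=0$ vanishes because $t^{1-\alpha} \to 0$ as $t \to 0^+$ (this uses $1-\alpha > 0$, which is available since $\alpha \in (0,1)$). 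Multiplying by $u_0(x)$ and integrating over $\Omega$ yields \eqref{eq0513_01}. This proves $u_0 \in \bH_{p, q, w, 0}^{\alpha,0}(\Omega_T)$.

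For the second assertion, the inclusion $\bH_{p, q, w, 0}^{\alpha,0}(\Omega_T) \subseteq \bH_{p, q, w}^{\alpha,0}(\Omega_T)$ is immediate by taking initial value $0$ in Definition \ref{def0513_1}(ii). Conversely, if $u \in \bH_{p, q, w}^{\alpha,0}(\Omega_T)$ with associated $u_0 \in L_{p, w_2}(\Omega)$, write $u = (u - u_0) + u_0$; the first summand lies in $\bH_{p, q, w, 0}^{\alpha,0}(\Omega_T)$ by definition and the second by the first part of the lemma, so their sum does too by linearity (which follows at once from \eqref{eq0513_01}). I do not expect any genuine obstacle; the only delicate point is the convergence of $\int_0^T t^{-\alpha q + \mu}\, \mathrm{d}t$ near $t=0$, which is precisely where the assumption $\alpha < (1+\mu)/q$ is used.
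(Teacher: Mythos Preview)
Your proof is correct and follows essentially the same route as the paper: compute $I^{1-\alpha}u_0$ explicitly, observe that $\partial_t I^{1-\alpha}u_0 = t^{-\alpha}u_0(x)/\Gamma(1-\alpha)$ lies in $L_{p,q,w}(\Omega_T)$ precisely because $\alpha q < 1+\mu$, verify \eqref{eq0513_01} by integration by parts using that $I^{1-\alpha}u_0$ vanishes at $t=0$, and then decompose $u = (u-u_0)+u_0$ for the second assertion. Your additional checks (that $u_0 \in L_{p,q,w}(\Omega_T)$ and the Fubini justification) are fine but not strictly needed beyond what the paper writes.
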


\begin{proof}
Clearly,
\[
I^{1-\alpha} u_0 = \frac{t^{1-\alpha}}{(1-\alpha)\Gamma(1-\alpha)}u_0(x) \quad \textrm{and} \quad \partial_t I^{1-\alpha} u_0 = \frac{t^{-\alpha}}{\Gamma(1-\alpha)}u_0(x),
\]
and we see that $\partial_t I^{1-\alpha} u_0 \in L_{p, q, w}(\Omega_T)$ because $\alpha q < 1 + \mu$.
By integration by parts and the fact that $I^{1-\alpha} u_0|_{t=0} = 0$, we have
\[
\int_{\Omega_T} I^{1-\alpha} u_0 \, \varphi_t \, \mathrm{d}x \, \mathrm{d}t = - \int_{\Omega_T} \partial_t I^{1-\alpha} u_0 \, \varphi \, \mathrm{d}x \, \mathrm{d}t
\]
for all $\varphi \in C_0^\infty\left([0,T) \times \Omega\right)$.
Therefore, by Definition \ref{def0513_1}, $u_0 \in \bH_{p, q, w, 0}^{\alpha,0}(\Omega_T)$.

If $u \in \bH_{p, q, w}^{\alpha,0}(\Omega_T)$, there exists $u_0 \in L_{p, w_2}(\Omega)$ such that $u - u_0 \in \bH_{p, q, w, 0}^{\alpha,0}(\Omega_T)$.
Since we also have $u_0 \in \bH_{p, q, w, 0}^{\alpha,0}(\Omega_T)$, it follows that
\[
u = (u-u_0)+u_0 \in \bH_{p, q, w, 0}^{\alpha,0}(\Omega_T).
\]
The lemma is proved. 
\end{proof}

\begin{lemma}
							\label{lem0513_2}
Let $\alpha \in [(1+\mu)/q,1)$ and $u_0 \in L_{p, w_2}(\Omega)$.
If $u_0 \in \bH_{p, q, w, 0}^{\alpha,0}(\Omega_T)$, then $u_0 = 0$.
Thus, for $u \in \bH_{p, q, w}^{\alpha,0}(\Omega_T)$, there exists a unique $u_0 \in L_{p, w_2}(\Omega)$ such that $u-u_0 \in \bH_{p, q, w, 0}^{\alpha,0}(\Omega_T)$ when $\alpha \in [(1+\mu)/q,1)$.
\end{lemma}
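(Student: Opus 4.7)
The plan is to compute $\partial_t I^{1-\alpha} u_0$ explicitly for a time-independent function $u_0(x)$ and observe that the threshold $\alpha = (1+\mu)/q$ is exactly the borderline at which $t^{-\alpha}u_0(x)$ fails to lie in $L_{p,q,w}(\Omega_T)$. Since membership in $\bH_{p,q,w,0}^{\alpha,0}(\Omega_T)$ forces the weak derivative $\partial_t I^{1-\alpha} u_0$ to live in $L_{p,q,w}$, we will deduce $u_0 = 0$.

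More concretely, mimicking the computation in the proof of Lemma \ref{lem0513_1}, one has $I^{1-\alpha}u_0(t,x) = t^{1-\alpha}u_0(x)/\Gamma(2-\alpha)$. For any $\varphi \in C_0^\infty([0,T) \times \Omega)$, integration by parts in $t$ yields
\[
\int_{\Omega_T} I^{1-\alpha}u_0 \, \varphi_t \, \mathrm{d}x\,\mathrm{d}t = -\int_{\Omega_T} \frac{t^{-\alpha}}{\Gamma(1-\alpha)}u_0(x)\,\varphi\,\mathrm{d}x\,\mathrm{d}t,
\]
the boundary contribution at $t = T$ vanishing because $\varphi(T,\cdot)=0$ and that at $t=0$ because $1-\alpha > 0$. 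If $u_0 \in \bH_{p,q,w,0}^{\alpha,0}(\Omega_T)$, then by Definition \ref{def0513_1} there is $f \in L_{p,q,w}(\Omega_T)$ satisfying the same identity with $f$ in place of $t^{-\alpha}u_0(x)/\Gamma(1-\alpha)$. The fundamental lemma of the calculus of variations then gives $f(t,x) = t^{-\alpha}u_0(x)/\Gamma(1-\alpha)$ for a.e.\ $(t,x) \in \Omega_T$.

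Next I would compute the mixed norm by Fubini:
\[
\|f\|_{L_{p,q,w}(\Omega_T)}^q = \Gamma(1-\alpha)^{-q}\,\|u_0\|_{L_{p,w_2}(\Omega)}^q \int_0^T t^{\mu - \alpha q}\,\mathrm{d}t.
\]
Under the assumption $\alpha \geq (1+\mu)/q$, we have $\mu - \alpha q \leq -1$, so the time integral diverges. As $f \in L_{p,q,w}(\Omega_T)$ by hypothesis, we must have $\|u_0\|_{L_{p,w_2}(\Omega)} = 0$, i.e., $u_0 = 0$.

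Finally, for the second assertion, suppose $u_0, u_0' \in L_{p,w_2}(\Omega)$ both satisfy $u - u_0, u - u_0' \in \bH_{p,q,w,0}^{\alpha,0}(\Omega_T)$. Since $\bH_{p,q,w,0}^{\alpha,0}(\Omega_T)$ is a linear space (the weak identity \eqref{eq0513_01} is linear in $u$ and $f$), the constant function $u_0' - u_0 = (u - u_0) - (u - u_0')$ lies in $\bH_{p,q,w,0}^{\alpha,0}(\Omega_T)$, and the first part of the lemma forces $u_0' - u_0 = 0$. The main (minor) subtlety will be justifying the vanishing of the boundary term at $t = 0$ and the identification of $f$ via the fundamental lemma in the mixed-norm weighted setting, but these are standard once one restricts to compact subsets of $\Omega_T$ where local integrability is clear.
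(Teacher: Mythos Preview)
Your proposal is correct and follows essentially the same approach as the paper: compute $\partial_t I^{1-\alpha}u_0 = t^{-\alpha}u_0(x)/\Gamma(1-\alpha)$ in the distributional sense, observe that $t^{-\alpha+\mu/q}\notin L_q(0,T)$ when $\alpha \geq (1+\mu)/q$ so membership in $L_{p,q,w}(\Omega_T)$ forces $u_0=0$, and deduce uniqueness by linearity. Your write-up is slightly more explicit about the integration by parts and the Fubini computation of the mixed norm, but the argument is identical.
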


\begin{proof}
If $u_0 \in \bH_{p, q, w, 0}^{\alpha,0}(\Omega_T)$, by Definition \ref{def0513_1}, the derivative $\partial_t I^{1-\alpha} u_0$ exists in $L_{p, q, w}(\Omega_T)$.
On the other hand,
\[
\partial_t I^{1-\alpha} u_0 = \frac{t^{-\alpha}}{\Gamma(1-\alpha)}u_0(x)
\]
in the distribution sense.
Because $t^{-\alpha}w_1(t)^{1/q} = t^{-\alpha + \mu/q} \notin L_q(0,T)$ (recall that $w(t, x) = w_1(t)w_2(x) = t^{\mu}w_2(x)$), we have $\partial_t I^{1-\alpha}u_0 \in L_{p, q, w}(\Omega_T)$ only when $u_0 = 0$.

To prove the uniqueness of $u_0$, for $u \in \bH_{p, q, w}^{\alpha,0}(\Omega_T)$, let $u_0, v_0 \in L_{p, w_2}(\Omega)$ satisfy
\[
u-u_0, \, u-v_0 \in \bH_{p, q, w, 0}^{\alpha,0}(\Omega_T).
\]
Then, $u_0 - v_0 = (u-u_0) - (u-v_0) \in \bH_{p, q, w, 0}^{\alpha,0}(\Omega_T)$.
From this, along with the fact that $u_0 - v_0 \in L_{p, w_2}(\Omega)$ and the first assertion of the lemma proved above, we get $u_0 = v_0$.
\end{proof}

Thanks to the above lemmas we are ready to define $\partial_t^\alpha u$ and the norm of $\bH_{p, q, w}^{\alpha,0}(\Omega_T)$.

\begin{definition}
\label{def_0223_1}
Let $p, q \in (1, \infty)$ and $w(t, x) = w_1(t)w_2(x) = t^{\mu}w_2(x)$, where $\mu \in (-1, q-1)$ and $w_2 \in A_p(\bR^d)$.
For $\alpha \in (0,(1+\mu)/q)$ and $u \in \bH_{p, q, w}^{\alpha,0}(\Omega_T)$,
we denote
\[
\partial_t^\alpha u = \partial_t I^{1-\alpha} u.
\]
For $\alpha \in [(1+\mu)/q,1)$ and $u \in \bH_{p, q, w}^{\alpha,0}(\Omega_T)$, by Lemma \ref{lem0513_2}, there exists a unique $u_0 \in L_{p, w_2}(\Omega_T)$ such that $u - u_0 \in \bH_{p, q, w, 0}^{\alpha,0}(\Omega_T)$.
In this case, we denote
\[
\partial_t^\alpha u = \partial_t I^{1-\alpha}(u-u_0).
\]
Then, the norm of $\bH_{p, q, w}^{\alpha,0}(\Omega_T)$ for $\alpha \in (0,1)$ is defined by
\[
\|u\|_{\bH_{p, q, w}^{\alpha,0}(\Omega_T)} = \|u\|_{L_{p, q, w}(\Omega_T)} + \|\partial_t^\alpha u\|_{L_{p, q, w}(\Omega_T)}.
\]
\end{definition}

For the case $p = q = 2$ with $\mu = 0$, a similar notion of $\partial_t^{\alpha}u$ to Definition \ref{def_0223_1} can be found in \cite{MR2538276} and \cite{MR4200127}.

\begin{remark}
	\label{rmk_alpha}
\mbox{}
	
(i) Note that $\bH_{p, q, w, 0}^{\alpha,0}(\Omega_T)$ is a subspace $\bH_{p, q, w}^{\alpha,0}(\Omega_T)$ with $u_0 = 0$.
Thus, for $u \in \bH_{p, q, w, 0}^{\alpha,0}(\Omega_T)$, we have
\[
\partial_t^\alpha u = \partial_t I^{1-\alpha}(u-0) = \partial_t I^{1-\alpha} u.
\]
Regarding the norm for $\bH_{p, q, w, 0}^{\alpha,0}(\Omega_T)$, we use the same norm $\|\cdot\|_{\bH_{p, q, w}^{\alpha,0}(\Omega_T)}$ (and the same notation).

(ii) In Definition \ref{def_0223_1}, unlike the case where $\alpha \in [(1+\mu)/q,1)$, $u_0$ is not involved in defining $\partial_t^\alpha u$ for $\alpha \in (0, (1+\mu)/q)$.
This distinction arises because the initial value $u_0$ cannot be properly defined when $\alpha \in (0, (1+\mu)/q)$.
See Lemma \ref{lem0513_1}.
One can observe such inadequacy in the following example for the case when $p = q$ and $w(t, x) = t^{\mu}w_2(x)=1$.
For $\phi(x) (\neq 0) \in C_0^{\infty}(\Omega)$, set $u_n(t, x) = \phi(x)$, and $v_n(t, x) = t^{1/n}\phi(x)$ for $n = 1, 2, \cdots$.
By direct computation, we see that $\partial_t^{\alpha}u_n -  \partial_t^{\alpha}u_m = 0$ for all $n, m = 1, 2, \cdots$, and $\partial_t^{\alpha}v_n - \partial_t^{\alpha}v_m \to 0$ as $n, m \to \infty$, provided that $\alpha < 1/p$.
But, $u_n(0, x) = \phi(x) \neq 0 = v_n(0, x)$ for all $n = 1, 2, \cdots$.
On the other hand, in the case of $\alpha \in [(1+\mu)/q, 1)$, Lemma \ref{lem0513_2} guarantees the well-definedness of the initial trace, at least in a framework of Definition \ref{def0513_1}.
See Theorems \ref{thm0111_01} and \ref{cor0111_01} for initial trace results concerning functions with both fractional time derivatives and spatial derivatives (precisely, those in $\bH_{p, q, w}^{k+\alpha, 2}(\Omega_T)$ and $\cH_{p, q, w}^{k+\alpha, 1}(\Omega_T)$).
\end{remark}

The following lemma is used in the proof of Lemma \ref{lem0517_2}.
Recall the definition of $J^{\alpha}\varphi$ in \eqref{eq0516_04}.

\begin{lemma}
							\label{lem0517_1}
For $\varphi \in C_0^\infty\left( \left( 0,T \right) \times \Omega \right)$, we have $J^\alpha \varphi \in C_0^\infty([0,T) \times \Omega)$ and
\[
\partial_t J^\alpha \varphi = J^\alpha \partial_t \varphi.
\]
\end{lemma}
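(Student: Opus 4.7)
The key move is the substitution $s = r-t$, which rewrites
\[
J^\alpha \varphi(t,x) = \frac{1}{\Gamma(\alpha)} \int_0^{T-t} s^{\alpha-1} \varphi(t+s,x)\,\mathrm{d}s,
\]
shifting the $t$-dependence from the singular weight $(r-t)^{\alpha-1}$ into the smooth factor $\varphi$. Since $\varphi \in C_0^\infty((0,T) \times \Omega)$, there exist $\delta > 0$ and a compact $K \subset \Omega$ with $\mathrm{supp}\,\varphi \subseteq [\delta, T-\delta] \times K$, so extending $\varphi$ by zero to a function $\tilde\varphi \in C_0^\infty(\mathbb{R} \times \Omega)$ yields
\[
J^\alpha \varphi(t,x) = \frac{1}{\Gamma(\alpha)} \int_0^{\infty} s^{\alpha-1} \tilde\varphi(t+s,x)\,\mathrm{d}s,
\]
with integrand supported in $s \in [0, T-\delta - t]$ when $t < T-\delta$ and vanishing otherwise.

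For the support and smoothness claims, I would first note that $J^\alpha\varphi(t,x) = 0$ whenever $x \notin K$, and whenever $t \geq T-\delta$ (since then $\tilde\varphi(t+s,x)=0$ for all $s \geq 0$). Hence $\mathrm{supp}(J^\alpha\varphi)$ is contained in $[0,T-\delta] \times K \subset [0,T) \times \Omega$, which is the support condition required for membership in $C_0^\infty([0,T) \times \Omega)$. Smoothness on $[0,T) \times \Omega$ follows by differentiating under the integral sign: for any multi-index and for $\partial_t^k$, the kernel $s^{\alpha-1}$ is integrable near $s=0$ (since $\alpha > 0$), the derivatives of $\tilde\varphi$ are uniformly bounded, and their $s$-support is contained in a bounded interval independent of small perturbations of $t$, giving a dominating integrable majorant.

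The identity $\partial_t J^\alpha\varphi = J^\alpha \partial_t \varphi$ then drops out of the same differentiation under the integral: I compute
\[
\partial_t J^\alpha \varphi(t,x) = \frac{1}{\Gamma(\alpha)} \int_0^{\infty} s^{\alpha-1}\, \partial_t \tilde\varphi(t+s, x)\,\mathrm{d}s = \frac{1}{\Gamma(\alpha)} \int_t^{T} (r-t)^{\alpha-1}\, \partial_t \varphi(r, x)\,\mathrm{d}r = J^\alpha \partial_t \varphi(t,x),
\]
where the last equality uses that $\partial_t \varphi \in C_0^\infty((0,T) \times \Omega)$ and undoes the substitution. Equivalently, Leibniz's rule applied directly to $\int_0^{T-t} s^{\alpha-1}\varphi(t+s,x)\,\mathrm{d}s$ produces a boundary contribution proportional to $(T-t)^{\alpha-1}\varphi(T,x)$, which vanishes because $\varphi(T,\cdot) \equiv 0$.

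There is no real obstacle; the only step needing care is justifying the interchange of $\partial_t$ with the integral at the singularity $s = 0$ of the weight, which is handled by the integrability of $s^{\alpha-1}$ on $(0,1)$ together with the compact support of $\tilde\varphi$ and its derivatives.
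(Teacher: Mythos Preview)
Your proof is correct and takes a genuinely different route from the paper. The paper establishes the identity $\partial_t J^\alpha \varphi = J^\alpha \partial_t \varphi$ in the weak sense: it tests against $\phi \in C_0^\infty(0,T)$ and exploits the adjoint relation $\int_0^T \phi\, J^\alpha \varphi \, \mathrm{d}t = \int_0^T (I^\alpha \phi)\, \varphi \, \mathrm{d}t$, together with $\partial_t I^\alpha \phi = I^\alpha \phi'$ (valid since $\phi(0)=0$) and integration by parts, to move the time derivative from $\phi$ onto $\varphi$. Your approach is the direct pointwise one: the substitution $s = r-t$ transfers the $t$-dependence from the singular kernel to the smooth compactly supported factor, after which differentiation under the integral sign is justified by the local integrability of $s^{\alpha-1}$ and dominated convergence. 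Your argument is more elementary and self-contained, and it has the added benefit of explicitly verifying the smoothness and support claims for $J^\alpha\varphi$, which the paper leaves implicit. The paper's duality computation, on the other hand, foreshadows the $I^\alpha$/$J^\alpha$ adjoint pairing that is used repeatedly in the subsequent lemmas.
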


\begin{proof}
To check $\partial_t J^\alpha \varphi = J^\alpha \partial_t \varphi$, for $\phi \in C_0^\infty(0,T)$, we consider
\[
\int_0^T \phi'(t) J^\alpha \varphi \, \mathrm{d}t = \int_0^T I^\alpha \phi'(t) \varphi \, \mathrm{d}t = \int_0^T \partial_t I^\alpha \phi (t) \varphi \, \mathrm{d}t,
\]
where we know that $\partial_t I^\alpha \phi = I^\alpha \phi'$ because $\phi(0) = 0$.
Thus,
\[
\int_0^T \phi'(t) J^\alpha \varphi \, \mathrm{d}t = \int_0^T \partial_t I^\alpha \phi (t) \varphi \, \mathrm{d}t = - \int_0^T I^\alpha \phi(t) \,  \partial_t \varphi \, \mathrm{d}t = - \int_0^T \phi(t) J^\alpha \partial_t \varphi \, \mathrm{d}t,
\]
where we used the fact that $I^\alpha \phi|_{t=0} = 0$ and $\varphi(T,x) = 0$.
This shows that $\partial_t J^\alpha \varphi = J^\alpha \partial_t \varphi$.
\end{proof}

\begin{lemma}
							\label{lem0517_2}
Let $\alpha \in (0,1)$ and $u \in \bH_{p, q, w, 0}^{\alpha,0}(\Omega_T)$.
Then, $u = I^\alpha \partial_t I^{1-\alpha} u$
and
\[
\|u\|_{L_{p, q, w}(\Omega_T)} \leq NT^\alpha \|\partial_t^\alpha u\|_{L_{p, q, w}(\Omega_T)},
\]
where $N = N(\alpha, q, \mu) > 0$.
Thus, if $\alpha \in [(1+\mu)/q,1)$ and $u \in \bH_{p, q, w}^{\alpha,0}(\Omega_T)$ with $u - u_0 \in \bH_{p, q, w, 0}^{\alpha,0}(\Omega_T)$,
then
\begin{equation}
							\label{eq0513_11}
\|u\|_{L_{p, q, w}(\Omega_T)} \leq N T^\alpha \|\partial_t^\alpha u\|_{L_{p, q, w}(\Omega_T)} + N T^{(1+\mu)/q} \|u_0\|_{L_{p, w_2}(\Omega)},
\end{equation}
where $N = N(\alpha,q, \mu) > 0$.
\end{lemma}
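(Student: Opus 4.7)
The plan is to establish the identity $u = I^\alpha \partial_t I^{1-\alpha} u$ first; once this is in hand, both norm estimates follow quickly from Hardy's inequality and a triangle inequality. Fix $u \in \bH_{p,q,w,0}^{\alpha,0}(\Omega_T)$ and set $f = \partial_t I^{1-\alpha} u \in L_{p,q,w}(\Omega_T)$. The heart of the matter is to show $I^{1-\alpha} u = I^1 f$. Take tensor product test functions $\varphi(t,x) = \psi(t) \eta(x)$ with $\psi \in C_0^\infty([0,T))$ and $\eta \in C_0^\infty(\Omega)$ in \eqref{eq0513_01}, and define $U(t) = \int_\Omega I^{1-\alpha} u(t,x) \eta(x)\,\mathrm{d}x$ and $F(t) = \int_\Omega f(t,x)\eta(x)\,\mathrm{d}x$. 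The resulting identity $\int_0^T U \psi' \,\mathrm{d}t = -\int_0^T F \psi \,\mathrm{d}t$ for all $\psi \in C_0^\infty([0,T))$ forces $U' = F$ distributionally on $(0,T)$ together with $U(0)=0$ (precisely because $\psi(0)$ need not vanish, so a nonzero boundary contribution would produce an extra term). Therefore $U(t) = \int_0^t F(s)\,\mathrm{d}s$ for a.e.\ $t$, and since $\eta$ is arbitrary, $I^{1-\alpha} u(t,x) = I^1 f(t,x)$ pointwise a.e. Applying $I^\alpha$ and using the semigroup identity $I^\alpha I^{1-\alpha} = I^1 = I^\alpha I^{1-\alpha}$ (verified by Fubini) gives $I^1 u = I^1 I^\alpha f$, and the injectivity of $I^1$ on $L^1_{\mathrm{loc}}$ yields $u = I^\alpha f = I^\alpha \partial_t^\alpha u$ a.e.

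The second step is the norm estimate. Write $I^\alpha f(t,x) = t^\alpha \cdot (t^{-\alpha} I^\alpha f(t,x))$ and bound $t^\alpha \le T^\alpha$ on $(0,T)$. Then Lemma \ref{lem0224} applied slicewise in $x$ gives
\[
\int_0^T |I^\alpha f(t,x)|^q w_1(t)\,\mathrm{d}t \le T^{\alpha q} \int_0^T |t^{-\alpha} I^\alpha f(t,x)|^q t^\mu \,\mathrm{d}t \le N T^{\alpha q} \int_0^T |f(t,x)|^q t^\mu\,\mathrm{d}t.
\]
Taking the $L_p(\Omega, w_2\,\mathrm{d}x)^{q/p}$-norm before integrating in $t$ and applying Minkowski's integral inequality (or simply noting that the $L_{p,q,w}$ norm is the $L_{q,w_1}((0,T); L_{p,w_2}(\Omega))$ norm, and the bound above holds in any Banach space since $I^\alpha$ is a scalar convolution), we obtain $\|u\|_{L_{p,q,w}(\Omega_T)} = \|I^\alpha \partial_t^\alpha u\|_{L_{p,q,w}(\Omega_T)} \le N T^\alpha \|\partial_t^\alpha u\|_{L_{p,q,w}(\Omega_T)}$.

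For the last assertion with $\alpha \in [(1+\mu)/q,1)$, apply what has just been proved to $u - u_0 \in \bH_{p,q,w,0}^{\alpha,0}(\Omega_T)$, noting that $\partial_t^\alpha(u-u_0) = \partial_t^\alpha u$ by Definition \ref{def_0223_1}. The triangle inequality gives $\|u\|_{L_{p,q,w}(\Omega_T)} \le \|u - u_0\|_{L_{p,q,w}(\Omega_T)} + \|u_0\|_{L_{p,q,w}(\Omega_T)}$, and a direct computation using that $u_0$ is constant in $t$ yields
\[
\|u_0\|_{L_{p,q,w}(\Omega_T)} = \left(\int_0^T t^\mu \,\mathrm{d}t\right)^{1/q} \|u_0\|_{L_{p,w_2}(\Omega)} = \frac{T^{(1+\mu)/q}}{(1+\mu)^{1/q}} \|u_0\|_{L_{p,w_2}(\Omega)},
\]
which completes \eqref{eq0513_11}.

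The main obstacle is the first step: extracting from the weak formulation \eqref{eq0513_01} the pointwise (a.e.) identity $I^{1-\alpha} u = I^1 f$, which is where the vanishing initial trace of $I^{1-\alpha} u$ is encoded. Everything else is a direct application of Hardy's inequality and a straightforward computation. One detail to watch is that the semigroup property $I^\alpha I^{1-\alpha} = I^1$ is used for $L^1_{\mathrm{loc}}$ functions and follows from Fubini; and the injectivity of $I^1$ is the Lebesgue differentiation theorem.
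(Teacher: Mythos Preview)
Your proof is correct and reaches the same conclusion, but your route to the identity $u = I^\alpha \partial_t I^{1-\alpha} u$ differs from the paper's. The paper argues by duality: it pairs $I^\alpha \partial_t I^{1-\alpha} u$ against $\varphi \in C_0^\infty((0,T)\times\Omega)$, transfers $I^\alpha$ to the test side as $J^\alpha \varphi$ (which lands in $C_0^\infty([0,T)\times\Omega)$ by Lemma~\ref{lem0517_1}), invokes \eqref{eq0513_01} with this test function, and then uses the commutation $\partial_t J^\alpha \varphi = J^\alpha \partial_t \varphi$ from Lemma~\ref{lem0517_1} to return the derivative and close the chain of equalities. Your argument is more direct: testing \eqref{eq0513_01} against tensor products $\psi(t)\eta(x)$ extracts the one-dimensional statement that $t \mapsto \int_\Omega I^{1-\alpha} u(t,x)\eta(x)\,\mathrm{d}x$ has distributional derivative $\int_\Omega f(t,x)\eta(x)\,\mathrm{d}x$ and vanishes at $t=0$ (the latter precisely because $\psi(0)$ is free), whence $I^{1-\alpha} u = I^1 f$ a.e.; applying $I^\alpha$ and the injectivity of $I^1$ finishes. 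Your approach bypasses Lemma~\ref{lem0517_1} entirely and makes the role of the zero initial trace more explicit; the paper's approach is slightly slicker as a single computation and reuses the $J^\alpha$ machinery later in the divergence setting (Lemma~\ref{lem0517_3}). The norm estimates are handled identically in substance by both arguments (Hardy's inequality for the first, and the decomposition $u = I^\alpha \partial_t^\alpha u + u_0$ plus a direct computation of $\|u_0\|_{L_{p,q,w}(\Omega_T)}$ for the second).
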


\begin{proof}
For $\varphi \in C_0^\infty((0,T) \times \Omega)$, by Lemma \ref{lem0517_1} $J^\alpha \varphi(t,x) \in C_0^\infty([0,T) \times \Omega)$ and $\partial_t  J^\alpha \varphi = J^\alpha \partial_t \varphi$.
Hence, $u= I^\alpha \partial_t I^{1-\alpha} u$ holds because of the following calculation.
\[
\int_{\Omega_T} I^\alpha \partial_t I^{1-\alpha} u \, \varphi \, \mathrm{d}x \, \mathrm{d}t = \int_{\Omega_T} \partial_t I^{1-\alpha} u J^{\alpha} \varphi \, \mathrm{d}x \, \mathrm{d}t
= - \int_{\Omega_T} I^{1-\alpha} u \, \partial_t J^{\alpha} \varphi \, \mathrm{d}x \, \mathrm{d}t
\]
\[
 = - \int_{\Omega_T} I^{1-\alpha} u J^{\alpha} \partial_t  \varphi \, \mathrm{d}x \, \mathrm{d}t
= - \int_{\Omega_T} I^\alpha I^{1-\alpha} u \, \partial_t  \varphi \, \mathrm{d}x \, \mathrm{d}t = \int_{\Omega_T} u \, \varphi \, \mathrm{d}x \, \mathrm{d}t,
\]
which we essentially used that \eqref{eq0513_01} holds for $J^\alpha \varphi \in C_0^\infty([0,T) \times \Omega)$.

By the fact that $u= I^\alpha \partial_t I^{1-\alpha}u$ and \eqref{eq0224_03} of Lemma \ref{lem0224},
\[
\|u\|_{L_{p, q, w}(\Omega_T)} = \|I^\alpha \partial_t I^{1-\alpha} u\|_{L_{p, q, w}(\Omega_T)} \leq N T^\alpha \|\partial_t I^{1-\alpha} u\|_{L_{p, q, w}(\Omega_T)}.
\]

To prove \eqref{eq0513_11}, we simply write
\begin{equation*}
%							\label{eq0513_10}
u = u-u_0 + u_0 = I^\alpha \partial_t I^{1-\alpha} (u-u_0) + u_0 = I^\alpha \partial_t^\alpha u + u_0.
\end{equation*}
The lemma is proved.
\end{proof}

\begin{lemma}
							\label{lem0225_1}
Let $\alpha \in (0,1)$ and $u \in \bH_{p, q, w, 0}^{\alpha,0}(\Omega_T)$.
Then
\[
\| t^{-\alpha} u \|_{L_{p, q, w}(\Omega_T)} \le N \| \partial_t^{\alpha}u \|_{L_{p, q, w}(\Omega_T)},
\]
where $N = N(\alpha, q, \mu) > 0$.
Furthermore, if $\alpha \in [(1+\mu)/q, 1)$ and $u \in \bH_{p, q, w}^{\alpha,0}(\Omega_T)$ with $u - u_0 \in \bH_{p, q, w, 0}^{\alpha,0}(\Omega_T)$, we have
\[
\| t^{-\alpha} (u - u_0) \|_{L_{p, q, w}(\Omega_T)} \le N(\alpha, q, \mu) \| \partial_t^{\alpha}u \|_{L_{p, q, w}(\Omega_T)}.
\]
\end{lemma}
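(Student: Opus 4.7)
The plan is to combine the representation formula $u = I^\alpha \partial_t^\alpha u$ from Lemma \ref{lem0517_2} with the Hardy-type inequality of Lemma \ref{lem0224}, with the spatial integration handled by Minkowski's inequality. I would proceed in three steps.

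First, for $u \in \bH_{p,q,w,0}^{\alpha,0}(\Omega_T)$, by Remark \ref{rmk_alpha}(i) we have $\partial_t^\alpha u = \partial_t I^{1-\alpha} u$, and by Lemma \ref{lem0517_2} the identity $u = I^\alpha \partial_t I^{1-\alpha} u = I^\alpha \partial_t^\alpha u$ holds. Multiplying by $t^{-\alpha}$ gives the pointwise (in $x$) representation
\[
t^{-\alpha} u(t,x) = \frac{t^{-\alpha}}{\Gamma(\alpha)} \int_0^t (t-s)^{\alpha-1} \partial_t^\alpha u(s,x)\,\mathrm{d}s.
\]

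Second, set $G(s) := \|\partial_t^\alpha u(s,\cdot)\|_{L_{p,w_2}(\Omega)}$. By Minkowski's integral inequality applied to the $L_{p,w_2}(\Omega)$-norm in the $x$-variable, we obtain
\[
\|t^{-\alpha} u(t,\cdot)\|_{L_{p,w_2}(\Omega)} \le t^{-\alpha} I^\alpha G(t).
\]
Raising this to the $q$-th power, multiplying by $w_1(t) = t^\mu$, and integrating over $(0,T)$, the right-hand side is precisely the quantity controlled by Lemma \ref{lem0224} with $f = G$; this yields
\[
\int_0^T \|t^{-\alpha} u(t,\cdot)\|_{L_{p,w_2}(\Omega)}^q\, t^\mu\,\mathrm{d}t \le N \int_0^T G(t)^q t^\mu\,\mathrm{d}t = N\,\|\partial_t^\alpha u\|_{L_{p,q,w}(\Omega_T)}^q,
\]
which is the first inequality after taking the $q$-th root.

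Third, for the case $\alpha \in [(1+\mu)/q,1)$ and $u \in \bH_{p,q,w}^{\alpha,0}(\Omega_T)$ with $u - u_0 \in \bH_{p,q,w,0}^{\alpha,0}(\Omega_T)$, I would simply apply the first part to $v := u - u_0$, using that $\partial_t^\alpha v = \partial_t I^{1-\alpha}(u - u_0) = \partial_t^\alpha u$ by Definition \ref{def_0223_1}.

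There is no genuine obstacle here; the only point that requires a small check is the applicability of Minkowski's integral inequality in the step where the spatial $L_{p,w_2}$-norm is pulled inside $I^\alpha$, but this is standard since the kernel $(t-s)^{\alpha-1}$ is non-negative. The $A_p$-weight $w_2$ plays no role beyond making $L_{p,w_2}(\Omega)$ a genuine Banach function space so that Minkowski's inequality applies, and the weight-range condition $\mu \in (-1, q-1)$ is exactly what Lemma \ref{lem0224} requires.
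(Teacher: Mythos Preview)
Your proposal is correct and follows essentially the same approach as the paper: both use the representation $u = I^\alpha \partial_t^\alpha u$ from Lemma~\ref{lem0517_2}, apply Minkowski's inequality to pull the $L_{p,w_2}(\Omega)$-norm inside $I^\alpha$, and then invoke the Hardy-type inequality of Lemma~\ref{lem0224}. The only cosmetic difference is that the paper dispatches the second assertion in a single sentence at the outset (``by definition, the second assertion follows from the first one''), whereas you spell out the substitution $v = u - u_0$ explicitly at the end.
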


\begin{proof}
By definition, the second assertion follows from the first one. Let $\alpha \in (0,1)$, $u \in \bH_{p, q, w, 0}^{\alpha,0}(\Omega_T)$ and denote $f = \partial_t I^{1-\alpha}u = \partial_t^\alpha u$.
Then by Lemma \ref{lem0517_2} $u = I^\alpha f$.
Since $f \in L_{p, q, w}(\Omega_T)$, by Minkowski inequality and \eqref{eq0224_03} of Lemma \ref{lem0224} we have
\[
\int_0^T \left(  \int_{\Omega} | I^\alpha f(t,x)|^p w_2(x)\,\mathrm{d}x \right)^{q/p} t^{-q\alpha + \mu}\, \mathrm{d}t \leq \int_0^T | I^{\alpha} \|f(t, \cdot)\|_{L_{p, w_2}(\Omega)} |^q t^{-q\alpha + \mu} \,\mathrm{d}t
\]
\[
\le N \int_0^T  \|f(t, \cdot)\|_{L_{p, w_2}(\Omega)}^q t^{\mu}\, \mathrm{d}t,
\]
where $N = N(\alpha, q, \mu)$.
We obtain the desired inequality by recalling that $I^\alpha f = u$.
\end{proof}

\begin{remark}
	\label{rmk_yamamoto} 
We here comment on the equivalence between $\bH_{p}^{\alpha, 0}$ and the function spaces with fractional regularity in time that appear in literature \cite{MR4200127} and \cite{MR2125407, MR2278673}.

(i) Lemma \ref{lem0517_2} shows that, for $u \in \bH_{p, q, w, 0}^{\alpha,0}(\Omega_T)$,
\[
\|\partial_t^\alpha u\|_{L_{p, q, w}(\Omega)} \simeq \|u\|_{\bH_{p, q, w}^{\alpha,0}(\Omega_T)}.
\]
Let us use the notation $\bH_{p,0}^{\alpha,0}(0,T)$ for functions without spatial variables with $w_1(t) = 1$.
Then, the above equivalence indicates that $\bH_{p,0}^{\alpha,0}(0,T)$, when $p=2$, is the same as $H_\alpha(0,T)$  defined in \cite{MR4200127}, where the authors state that
\[
\|\partial_t^\alpha u\|_{L_2(0,T)} \simeq \|u\|_{H_\alpha(0,T)}.
\]
Indeed, in \cite{MR4200127} the authors defines $\partial_t^\alpha$ as an inverse operator of the operator $I^\alpha$ from $L_2(0,T) \to H_\alpha(0,T)$.
Thus, for any $u \in \bH_{p,0}^{\alpha,0}(0,T)$, we have $\partial_t^\alpha u \in L_2(0,T)$ and by Lemma \ref{lem0517_2}, $I^\alpha \partial_t^\alpha u = u$, which means that $u \in H_\alpha(0,T)$.
On the other hand, if $u \in H_\alpha(0,T)$, then there exists $f \in L_2(0,T)$ such that $u = I^\alpha f$.
Then, $\partial_t^\alpha u = \partial_t^\alpha 
I^\alpha f = \partial_t I^{1-\alpha} I^\alpha f = f$.
We also see that \eqref{eq0513_01} is satisfied. Hence, by Definition \ref{def0513_1} $u \in \bH_{p,0}^{\alpha,0}(0,T)$.
Also note that Lemma \ref{lem0225_1} shows that, when $\alpha = 1/2$,
\[
\int_0^T \frac{|u(t)|^2}{t} \, \mathrm{d}t < \infty
\]
for $u \in \bH_{2,0}^{1/2,0}(\Omega_T)$,
which is required for $u \in H_{1/2}(0,T)$ in \cite{MR4200127}.

(ii) As noted in \cite[Example 2.1]{MR2278673}, if $p=q$, $w(t, x)= 1$ and $\alpha \neq 1/p$, $\bH_{p}^{\alpha, 0}(\Omega_T)$ is equivalent to
\[
\bar{H}_p^{\alpha}\left(  \left(0, T \right), L_p\left(  \Omega \right)\right) := \{ u|_{(0, T)} : u \in \bar{H}_p^{\alpha}\left(  \bR, L_p\left(  \Omega \right)\right) \},
\]
where $\bar{H}_p^{\alpha}\left(  \bR, L_p\left(  \Omega \right)\right)$ is an $L_p(\Omega)$-valued Bessel potential space.
The norm is given by
\[
\| u \|_{\bar{H}_p^{\alpha}\left(  \left(0, T \right), L_p\left(  \Omega \right)\right)} = \inf \{ \|v\|_{\bar{H}_p^{\alpha}\left(  \bR, L_p\left(  \Omega \right)\right)} : v|_{(0, T)} = u \}.
\]
The author in \cite{MR2278673} implicitly demonstrated the equivalence between $\bH_{p}^{\alpha, 0}(\Omega_T)$ and $\bar{H}_p^{\alpha}\left(  \left(0, T \right), L_p(\Omega)\right)$ by using results from operator analysis and the Sobolev embedding theorem.
In fact, one can obtain the above equivalence by using \cite[(7.4)]{MR1347689} with an extension of $u \in \bH_p^{\alpha, 0}(\Omega_T)$ to $\bR \times \Omega$.
For the case $\alpha = 1/p$, we can show that $\bH_{p}^{\alpha, 0}(\Omega_T)$ is equivalent to a proper subset of $\bar{H}_p^{\alpha}\left( (0, T), L_p(\Omega)\right)$.
At least, Lemma \ref{lem0525_1} below implies that they are not equivalent because one cannot define the initial trace of functions in $\bar{H}_p^{\alpha}\left((0, T), L_p(\Omega)\right)$.
See Remark \ref{rmk_230615}-(ii) for more discussion about the case $\alpha = 1/p$.
\end{remark}

\begin{proposition}
							\label{prop0514_1}
For $u \in \bH_{p, q, w, 0}^{\alpha,0}(\Omega_T)$, there is a sequence $\{u_k\}$ such that $u_k \in C^\infty\left([0,T] \times \Omega\right)$, $u_k(t,x)$ vanishes for large $|x|$ (if $\Omega$ is unbounded), $u_k(0,x) = 0$, and $u_k \to u$ in $\bH_{p, q, w, 0}^{\alpha,0}(\Omega_T)$ as $k \to \infty$.
\end{proposition}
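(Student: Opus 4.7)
The plan is to reduce the approximation problem in $\bH_{p,q,w,0}^{\alpha,0}(\Omega_T)$ to the (standard) approximation problem in $L_{p,q,w}(\Omega_T)$ by using the representation $u=I^\alpha f$ with $f=\partial_t^\alpha u$ that Lemma \ref{lem0517_2} provides. The map $f\mapsto I^\alpha f$ sends $L_{p,q,w}(\Omega_T)$ into $\bH_{p,q,w,0}^{\alpha,0}(\Omega_T)$, is the inverse of $\partial_t^\alpha$ there, and is bounded in the relevant norm; so if I can approximate $f$ by sufficiently nice functions $f_k$ in $L_{p,q,w}$, then $u_k:=I^\alpha f_k$ will do the job.

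First I would pick a sequence $f_k\in C_0^\infty((0,T)\times\Omega)$, with $\operatorname{supp}f_k\subset[\delta_k,T-\delta_k]\times K_k$ for some $\delta_k\searrow 0$ and compact $K_k\Subset\Omega$ exhausting $\Omega$, such that $f_k\to f$ in $L_{p,q,w}(\Omega_T)$. This is routine density in the weighted mixed norm space: the weight $t^\mu$ is locally integrable on $(0,T)$ (since $\mu>-1$) and $w_2\in A_p(\bR^d)$, so cutoffs away from $\{t=0\}$, $\{t=T\}$, $\partial\Omega$, and $|x|=\infty$ followed by a standard spatial mollification suffice. Next I would set
\[
u_k(t,x)=I^\alpha f_k(t,x)=\frac{1}{\Gamma(\alpha)}\int_0^t(t-s)^{\alpha-1}f_k(s,x)\,\mathrm{d}s.
\]
Since $f_k\equiv 0$ on $[0,\delta_k]\times\Omega$, the integral is zero for $t\le\delta_k$, so $u_k\equiv 0$ there; for $t>\delta_k$, substituting $r=t-s$ and using that $f_k$ vanishes to infinite order at $s=\delta_k$ allows differentiation under the integral in all orders, giving $u_k\in C^\infty([0,T]\times\Omega)$ with $u_k(0,x)=0$ and with $x$-support inside $K_k$. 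Then I would verify $u_k\in\bH_{p,q,w,0}^{\alpha,0}(\Omega_T)$ with $\partial_t^\alpha u_k=f_k$: compute $I^{1-\alpha}u_k=I^{1-\alpha}I^\alpha f_k=If_k$, whose distributional $\partial_t$ is $f_k\in L_{p,q,w}(\Omega_T)$, and $(If_k)|_{t=0}=0$, so the integration-by-parts identity \eqref{eq0513_01} holds against every $\varphi\in C_0^\infty([0,T)\times\Omega)$. Finally I would prove $u_k\to u$ in the $\bH_{p,q,w,0}^{\alpha,0}$-norm by splitting
\[
\|u-u_k\|_{\bH_{p,q,w}^{\alpha,0}}=\|I^\alpha(f-f_k)\|_{L_{p,q,w}}+\|f-f_k\|_{L_{p,q,w}},
\]
and bounding the first summand by $NT^\alpha\|f-f_k\|_{L_{p,q,w}}$ via Lemma \ref{lem0517_2} (equivalently, the Hardy inequality of Lemma \ref{lem0224}).

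The main obstacle I anticipate is the smoothness of $u_k$ up to $t=0$ and at the endpoint $t=\delta_k$, because the Abel kernel $(t-s)^{\alpha-1}$ is singular. The way to overcome this cleanly is to insist that $f_k$ vanishes identically on $[0,\delta_k]$: then $u_k\equiv 0$ on $[0,\delta_k]$, all derivatives at $t=\delta_k$ from the left are zero, and on $(\delta_k,T]$ after the change of variable $r=t-s$ the integrand $r^{\alpha-1}f_k(t-r,x)$ is, up to the integrable factor $r^{\alpha-1}$, a smooth function of $(t,r,x)$ that vanishes near $r=t-\delta_k$, so every $t$-derivative can be passed under the integral and matches $0$ at $t=\delta_k^+$. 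With that, no regularization of the kernel itself is required, and the rest of the argument is bookkeeping in weighted mixed-norm spaces.
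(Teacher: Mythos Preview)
Your proof is correct and follows essentially the same strategy as the paper: represent $u=I^\alpha f$ with $f=\partial_t^\alpha u\in L_{p,q,w}$, approximate $f$ by smooth functions, and set $u_k=I^\alpha f_k$. The paper takes a slight detour, approximating $I^{1-\alpha}u$ in $W_{p,q,w}^{1,0}(\Omega_T)$ by smooth $w_k$ with $w_k(0,x)=0$ and then setting $u_k=I^\alpha(\partial_t w_k)$; your direct approximation of $f=\partial_t I^{1-\alpha}u$ in $L_{p,q,w}$ by $f_k\in C_0^\infty((0,T)\times\Omega)$ is a bit more economical (it avoids invoking density in a weighted $W^{1,0}$ space with zero trace) but otherwise the two arguments are equivalent.
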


\begin{proof}
We know $I^{1-\alpha}u \in L_{p, q, w}(\Omega_T)$.
By Definition \ref{def0513_1}, $\partial_t I^{1-\alpha} u \in L_{p, q, w}(\Omega_T)$.
Thus, $I^{1-\alpha} u \in W_{p, q, w}^{1,0}(\Omega_T)$.
Moreover, the equality \eqref{eq0513_01}  implies that $I^{1-\alpha} u(t,x)|_{t=0} = 0$.
Then, there exists a sequence $\{w_k\}$ such that $w_k \in C^\infty\left([0,T] \times \Omega\right)$, $w_k(t,x)$ vanishes for large $|x|$, $w_k(0,x) = 0$, and $w_k \to I^{1-\alpha}u$ in $W_{p, q, w}^{1,0}(\Omega_T)$ as $k \to \infty$.
In particular,
\[
w_k \to I^{1-\alpha} u, \quad 
\partial_t w_k \to \partial_t I^{1-\alpha} u
\]
in $L_p(\Omega_T)$.
Set
\[
u_k(t,x) = I^\alpha (\partial_t w_k)(t,x) = \frac{1}{\Gamma(\alpha)} \int_0^t (t-s)^{\alpha-1} \partial_t w_k(s,x) \, \mathrm{d}s.
\]
Then, $u_k \in C^\infty([0,T] \times \Omega)$, $u_k(t,x)$ vanishes for large $|x|$, and $u_k(0,x) = 0$.
Since $\partial_t w_k \to \partial_t I^{1-\alpha} u$ in $L_{p, q, w}(\Omega_T)$, we have $u_k = I^\alpha (\partial_t w_k) \to I^{\alpha} \partial_t I^{1-\alpha} u$ in $L_{p, q, w}(\Omega_T)$, where $I^{\alpha} \partial_t I^{1-\alpha} u = u$ by Lemma \ref{lem0517_2}.
We also have
\[
\partial_t I^{1-\alpha} u_k = \partial_t I^{1-\alpha} I^\alpha (\partial_t w_k) = \partial_t w_k \to 
\partial_t I^{1-\alpha} u
\]
in $L_p(\Omega_T)$.
Therefore, $\{u_k\}$ is a desired sequence.
\end{proof}

The following Lemmas \ref{lem0525_1} and \ref{lem1115_1} contain the behavior of $u \in \bH_{p, q, w}^{\alpha,0}(\Omega_T)$ at $t = 0$.
The complete description of initial traces for functions in $\bH_{p, q, w}^{\alpha,0}(\Omega_T)$ with additional temporal and spatial Sobolev regularities (that is, functions in $\bH_{p, q, w}^{k+\alpha,2}(\Omega_T)$) is provided in Theorem \ref{thm0111_01}.

\begin{lemma}[Trace inequality]
							\label{lem0525_1}
Let $\alpha \in [(1+ \mu)/q,1)$ and $u \in \bH_{p, q, w}^{\alpha,0}(\Omega_T)$ with $u - u_0 \in \bH_{p, q, w, 0}^{\alpha,0}(\Omega_T)$.
Then,
\begin{equation}
							\label{eq0513_03}
\|u_0\|_{L_{p, w_2}(\Omega)} \leq N \left( T^{-(1+\mu)/q}\|u\|_{L_{p, q, w}(\Omega_T)} + T^{\alpha - (1+\mu)/q} \| \partial_t^{\alpha}u\|_{L_{p, q, w}(\Omega_T)}\right),
\end{equation}
where $N = N(\alpha, q, \mu) > 0$. Furthermore, if $T \ge 1$, we have
$$
\|u_0\|_{L_{p, w_2}(\Omega)} \leq N(\alpha, q, \mu) \|u\|_{\bH_{p, q, w}^{\alpha, 0}(\Omega_T)}.
$$
\end{lemma}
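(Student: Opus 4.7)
The plan is to exploit the representation formula $u = u_0 + I^\alpha \partial_t^\alpha u$ provided by Lemma \ref{lem0517_2}, which is valid precisely because $u - u_0 \in \bH_{p,q,w,0}^{\alpha,0}(\Omega_T)$. Rearranging, $u_0(x) = u(t,x) - I^\alpha \partial_t^\alpha u(t,x)$ holds for almost every $t \in (0,T)$. Since the left-hand side is independent of $t$, I can freely average in $t$ against the weight $t^\mu$, and that is the sole source of the explicit $T$-dependence in the inequality.

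Taking the $L_{p,w_2}(\Omega)$-norm of both sides and applying the triangle inequality gives
\[
\|u_0\|_{L_{p,w_2}(\Omega)} \le \|u(t,\cdot)\|_{L_{p,w_2}(\Omega)} + \|I^\alpha \partial_t^\alpha u(t,\cdot)\|_{L_{p,w_2}(\Omega)} \quad \textrm{for a.e.\ } t \in (0,T).
\]
Raising this to the $q$-th power, multiplying by $t^\mu$, and integrating over $(0,T)$, the left-hand side becomes $\|u_0\|_{L_{p,w_2}(\Omega)}^q \cdot T^{1+\mu}/(1+\mu)$. The first term on the right becomes $\|u\|_{L_{p,q,w}(\Omega_T)}^q$. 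For the second term, Minkowski's inequality in Bochner form yields $\|I^\alpha \partial_t^\alpha u(t,\cdot)\|_{L_{p,w_2}(\Omega)} \le I^\alpha\bigl(\|\partial_t^\alpha u(\cdot,\cdot)\|_{L_{p,w_2}(\Omega)}\bigr)(t)$, and then I rewrite $|I^\alpha g|^q t^\mu = |t^{-\alpha} I^\alpha g|^q \cdot t^{\mu + q\alpha}$ and bound $t^{q\alpha} \le T^{q\alpha}$ over $(0,T)$ before invoking the Hardy-type inequality \eqref{eq0224_03} from Lemma \ref{lem0224}. This gives the bound $NT^{q\alpha}\|\partial_t^\alpha u\|_{L_{p,q,w}(\Omega_T)}^q$. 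Taking the $q$-th root and dividing through by $T^{(1+\mu)/q}$ produces \eqref{eq0513_03}.

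For the second assertion, when $T \ge 1$ the constant $T^{-(1+\mu)/q}$ is automatically bounded by $1$, but $T^{\alpha - (1+\mu)/q}$ may blow up as $T \to \infty$ since $\alpha \ge (1+\mu)/q$ is allowed to be strict. The remedy is to apply the already-proven inequality \eqref{eq0513_03} not on $(0,T)$ but on $(0,1)$, after restricting $u$ to $\Omega_1$. One must briefly note that the restriction $u|_{\Omega_1}$ still lies in $\bH_{p,q,w}^{\alpha,0}(\Omega_1)$ with the same initial value $u_0$ and with $\partial_t^\alpha(u|_{\Omega_1}) = (\partial_t^\alpha u)|_{\Omega_1}$; this follows immediately from the distributional definition \eqref{eq0513_01} because test functions supported in $[0,1) \times \Omega$ are also test functions on $[0,T) \times \Omega$. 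Then \eqref{eq0513_03} applied with $T=1$ yields
\[
\|u_0\|_{L_{p,w_2}(\Omega)} \le N\bigl(\|u\|_{L_{p,q,w}(\Omega_1)} + \|\partial_t^\alpha u\|_{L_{p,q,w}(\Omega_1)}\bigr) \le N \|u\|_{\bH_{p,q,w}^{\alpha,0}(\Omega_T)}.
\]

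The only step requiring care is the Minkowski inequality that brings $\|\cdot\|_{L_{p,w_2}(\Omega)}$ inside the Riemann--Liouville integral $I^\alpha$; this is standard once one writes $I^\alpha$ as an integral against a positive kernel. Everything else is algebraic rearrangement and invocation of the Hardy inequality already proved in Lemma \ref{lem0224}, so I do not foresee any real obstacle.
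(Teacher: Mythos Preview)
Your proof is correct. Both your argument and the paper's rest on the same three-step scheme: (i) write an integral identity expressing $u_0$ in terms of $u$ and $\partial_t^\alpha u$, (ii) take the $L_{p,q,w}(\Omega_T)$-norm, and (iii) invoke the Hardy-type inequality from Lemma~\ref{lem0224}. The only difference is in step~(i): the paper works with the identity
\[
\frac{t^{1-\alpha}}{(1-\alpha)\Gamma(1-\alpha)}\,u_0(x) = I^{1-\alpha}u - I^1(\partial_t^\alpha u),
\]
obtained by applying $I^{1-\alpha}$ and using $I^{1-\alpha}(u-u_0) = I^1 \partial_t^\alpha u$, whereas you use the representation $u_0 = u - I^\alpha \partial_t^\alpha u$ coming directly from Lemma~\ref{lem0517_2}. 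Your route is marginally more economical since Lemma~\ref{lem0517_2} is already available; the paper's route is self-contained in that it does not cite that lemma. Either way the powers of $T$ fall out identically, and your handling of the $T\ge 1$ case by restricting to $(0,1)$ is exactly what the paper does (it says ``when $T \geq 1$, we take $L_{p, q, w}((0,1) \times \Omega)$-norms'').
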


\begin{proof}
Denote $f := \partial_t I^{1-\alpha}(u-u_0) = \partial_t^\alpha u$.
Observe that
\[
I^{1-\alpha} u_0 = I^{1-\alpha} u - I^{1-\alpha} (u-u_0) = I^{1-\alpha} u - \int_0^t f(s,x) \, \mathrm{d}s,
\]
where the second equality is due to the fact that \eqref{eq0513_01} holds for $\varphi \in C_0^\infty\left([0,T) \times \Omega\right)$ with $u$ replaced with $u-u_0$.
Since
\[
I^{1-\alpha} u_0 = \frac{t^{1-\alpha}}{(1-\alpha)\Gamma(1-\alpha)} u_0(x),
\]
we have
\[
\frac{t^{1-\alpha}}{(1-\alpha)\Gamma(1-\alpha)}u_0(x) = I^{1-\alpha}u - \int_0^t f(s,x) \, \mathrm{d}s.
\]
By taking the $L_{p, q, w}(\Omega_T)$-norms on both sides of the above equality, (when $T \geq 1$, we take $L_{p, q, w}((0,1) \times \Omega)$-norms), we have
\[
T^{(1-\alpha) + (1+\mu)/q} \|u_0\|_{L_{p, w_2}(\Omega)} \leq N \left( \|I^{1-\alpha} u\|_{L_{p, q, w}(\Omega_T)} + \left\| \int_0^t f(s,x) \, \mathrm{d}s \right\|_{L_{p, q, w}(\Omega_T)} \right)
\]
\[
\leq N(\alpha,q, \mu) \left( T^{1-\alpha}\|u\|_{L_{p, q, w}(\Omega_T)} +  T \|f\|_{L_{p, q, w}(\Omega_T)} \right),
\]
where the last inequality is due to \eqref{eq0224_03} of Lemma \ref{lem0224}.
Thus, the inequality \eqref{eq0513_03} is proved.
\end{proof}

\begin{lemma}
	\label{lem1115_1}
Let $\alpha \in ((1+\mu)/q,1)$ if $\mu \in [0, q-1)$ and let $\alpha \in (1/q, 1)$ if $\mu \in (-1, 0)$. For $u \in \bH_{p, q, w}^{\alpha,0}(\Omega_T)$ with $u_0 \in L_{p, w_2}(\Omega)$ and $u - u_0 \in \bH_{p, q, w, 0}^{\alpha,0}(\Omega_T)$, there is a version of $u$ (still denoted by $u$) such that
\begin{equation}
\label{eq0220_02_1}
\|u(t,\cdot) - u_0(\cdot)\|_{L_{p, w_2}(\Omega)} \leq N t^{\alpha-(1+\mu)/q} \|\partial_t^\alpha u\|_{L_{p, q, w}(\Omega_T)}
\end{equation}
for $t \in (0,T)$, where $N = N(\alpha,q, \mu) > 0$.
Thus, 
\begin{equation*}
%	\label{eq0220_02_1_0}
\| u(t,\cdot) - u_0(\cdot)\|_{L_{p, w_2}(\Omega)} \to 0
\end{equation*}
as $t \searrow 0$.
Furthermore, we have
\begin{equation}
	\label{eq230606_1}
\|u(t, \cdot)\|_{L_{p, w_2}(\Omega)} \le N(\alpha, q, \mu, T) \| u \|_{\bH_{p, q, w}^{\alpha, 0}(\Omega_T)}.
\end{equation}
\end{lemma}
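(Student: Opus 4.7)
The plan is to exploit the representation $u - u_0 = I^\alpha f$ with $f = \partial_t^\alpha u$ provided by Lemma \ref{lem0517_2} (applied to $u - u_0 \in \bH_{p,q,w,0}^{\alpha,0}(\Omega_T)$). This yields the pointwise formula
\[
u(t,x) - u_0(x) = \frac{1}{\Gamma(\alpha)} \int_0^t (t-s)^{\alpha-1} f(s,x) \, \mathrm{d}s,
\]
and I will take the right-hand side as the distinguished version of $u$. Applying Minkowski's inequality in $x$ reduces \eqref{eq0220_02_1} to estimating $\int_0^t (t-s)^{\alpha-1} \|f(s,\cdot)\|_{L_{p,w_2}(\Omega)} \, \mathrm{d}s$.

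Next I would split $(t-s)^{\alpha-1} = (t-s)^{\alpha-1} s^{-\mu/q} \cdot s^{\mu/q}$ and apply H\"older's inequality in $s$ with exponents $q'$ and $q$, pairing the factor $s^{\mu/q}\|f(s,\cdot)\|_{L_{p,w_2}(\Omega)}$ against the temporal weight to produce $\|f\|_{L_{p,q,w}(\Omega_T)}$. The remaining factor is the Beta-type integral
\[
\int_0^t (t-s)^{(\alpha-1)q'} s^{-\mu q'/q} \, \mathrm{d}s,
\]
which, via the substitution $s = t\sigma$, equals a finite constant times $t^{(\alpha-1)q' - \mu q'/q + 1}$; raising to the $1/q'$ power gives the exponent $\alpha - (1+\mu)/q$ as claimed. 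Convergence of the Beta integral requires $(\alpha-1)q' > -1$, i.e., $\alpha > 1/q$, and $-\mu q'/q > -1$, i.e., $\mu < q-1$. The latter is standing, and the former is exactly matched by the hypothesis: for $\mu \in [0,q-1)$ we have $\alpha > (1+\mu)/q \ge 1/q$, and for $\mu \in (-1,0)$ the condition $\alpha > 1/q$ is imposed directly. With \eqref{eq0220_02_1} in hand, convergence $\|u(t,\cdot) - u_0\|_{L_{p,w_2}(\Omega)} \to 0$ as $t \searrow 0$ is automatic because $\alpha - (1+\mu)/q > 0$ in both cases (for $\mu < 0$, note $(1+\mu)/q < 1/q < \alpha$).

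For \eqref{eq230606_1}, I would use the triangle inequality, bound $\|u(t,\cdot) - u_0\|_{L_{p,w_2}(\Omega)}$ uniformly in $t \in (0,T)$ by $N T^{\alpha-(1+\mu)/q}\|\partial_t^\alpha u\|_{L_{p,q,w}(\Omega_T)}$ (positivity of the exponent is crucial here), and bound $\|u_0\|_{L_{p,w_2}(\Omega)}$ via the trace inequality \eqref{eq0513_03} of Lemma \ref{lem0525_1}. The main obstacle is really the Beta integral bookkeeping: tracking the precise contribution of the weight exponent $\mu$ and verifying that the integrability thresholds are exactly what the hypotheses guarantee; once the arithmetic is carried out, the rest is a routine application of Minkowski and H\"older. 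A minor subtlety to flag is that the "version" of $u$ in the statement is not chosen ad hoc but is canonically provided by the integral representation, whose absolute convergence for every $t \in (0,T)$ is itself a byproduct of the H\"older estimate above.
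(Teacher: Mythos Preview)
Your proposal is correct and follows essentially the same approach as the paper: both use the representation $u - u_0 = I^\alpha \partial_t^\alpha u$ from Lemma~\ref{lem0517_2}, apply Minkowski in $x$, then H\"older in $s$ with the splitting $(t-s)^{\alpha-1} = (t-s)^{\alpha-1} s^{-\mu/q} \cdot s^{\mu/q}$, and evaluate the resulting Beta integral under exactly the integrability conditions $\alpha > 1/q$ and $\mu < q-1$. Your derivation of \eqref{eq230606_1} via the triangle inequality and Lemma~\ref{lem0525_1} also matches the paper's argument.
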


\begin{proof}
Note that \eqref{eq230606_1} follows from \eqref{eq0220_02_1} with the help of \eqref{eq0513_03} in Lemma \ref{lem0525_1} and the fact that $\alpha - (1+\mu)/q > 0$ under the assumptions on $\alpha, q$ and $\mu$.

Since $u - u_0 \in \bH_{p, q, w, 0}^{\alpha,0}(\Omega_T)$, by Lemma \ref{lem0517_2},
\[
u(t,x) - u_0(x) = I^\alpha \partial_t^\alpha u = \frac{1}{\Gamma(\alpha)}\int_0^t (t-s)^{\alpha-1} \partial_t^\alpha u(s,x) \, \mathrm{d}s
\]
for $(t, x) \in \Omega_T$.
By taking $L_{p, w_2}(\Omega)$-norms of both sides along with the Minkowski inequality and H\"{o}lder's inequality, we get
\[
\|u(t,\cdot) - u_0(\cdot)\|_{L_{p, w_2}(\Omega)} \leq \frac{1}{\Gamma(\alpha)}\int_0^t (t-s)^{\alpha-1} \|\partial_t^\alpha u(s,\cdot)\|_{L_{p, w_2}(\Omega)} \, \mathrm{d}s
\]
\[
\le N(\alpha) \left( \int_0^t (t-s)^{q'(\alpha - 1)} s^{-(q'-1)\mu}\,\mathrm{d}s \right)^{1/q'} \left( \int_0^t  \|\partial_t^\alpha u(s,\cdot)\|_{L_{p, w_2}(\Omega)}^q s^{\mu} \, \mathrm{d}s \right)^{1/q}
\]
\[
\leq N(\alpha,q, \mu) t^{\alpha - (1+\mu)/q} \left( \int_0^t \|\partial_t^\alpha u(s, \cdot)\|_{L_{p, w_2}(\Omega)}^{q}s^{\mu}\,\mathrm{d}s \right)^{1/q},
\]
where the last inequality is due to the assumptions on $\alpha$ and $\mu$, i.e. $\alpha > 1/q$ and $\mu < q-1$.
The lemma is proved.
\end{proof}

\begin{remark}
	\label{rmk_230613}
To see the necessity of the conditions on $\alpha$ in Lemma \ref{lem1115_1}, instead of \eqref{eq0220_02_1}, let us 
consider a weaker version
\begin{equation}
							\label{eq0705_01}
\sup_{0 < t < T}\|u(t,\cdot)\|_{L_{p,w_2}(\Omega)} \leq N \|\partial_t^\alpha u\|_{L_{p,q,w}(\Omega_T)},
\end{equation}
where we assume $u_0=0$ for simplicity.
Take $u(t,x) = \varphi(x) I^\alpha f(t)$, where $f$ and $\varphi$ are sufficiently smooth, so that $u(0,x) = 0$ and $\partial_t^\alpha u = \varphi
(x) f(t)$.
From the inequality \eqref{eq0705_01}, we have
\[
|I^\alpha f(t)| = N(\alpha) \left|\int_0^t (t-s)^{\alpha-1} f(s) \, \mathrm{d}s \right| \leq N \left(\int_0^T |f(s)|^q s^{\mu} \mathrm{d} s \right)^{1/q}
\]
for any $t \in (0,T)$, which means that, as a function of $s$, the $L_{q'}$-norm ($1/q+1/q'=1$) of $(t-s)^{\alpha-1} s^{-\mu/q}$ on the interval $(0,t)$ is uniformly bounded.
One can check that, by direct calculation, this is possible only when $\alpha$ satisfies the conditions (i.e., $\alpha > 1/q \vee (1 + \mu)/q$ with $\mu \in (-1, q-1)$) in Lemma \ref{lem1115_1}.
Especially, we need $\alpha > 1/q$ regardless of the value $\mu$.
The same conditions on $\alpha$ are imposed in Lemma \ref{lem230606_1} for the divergence case.
Nevertheless, it is worth noting that, by Lemma \ref{lem0513_2}, $u_0(x) = u(0, x)$ in Definition \ref{def_0223_1} is well-defined even under the condition $\alpha = (1+\mu)/q$ with $q \in (1, \infty)$ and $\mu \in (-1, q-1)$.
\end{remark}

\begin{remark}
	\label{rmk230604_1}
For later use, we note that if $u$ belongs to
\[
u \in W_{p, q, w}^{1, 0}(\Omega_T) := \{ u \in L_{p, q, w}(\Omega_T) : \partial_t u \in L_{p, q, w}(\Omega_T) \},
\]
which is a subset of $\bH_{p,q,w}^{\alpha,0}(\Omega_T)$, then there is a version of $u$ (still denoted by $u$) such that $u(t, \cdot)$ for $t \in [0, T]$ is well-defined in $L_{p, w_2}(\Omega)$ and satisfies
\begin{equation}
	\label{eq230604_1}
\|u(t, \cdot)\|_{L_{p, w_2}(\Omega)} \leq N(q, \mu) T^{-(1+\mu)/q} \left( \|u\|_{L_{p, q, w}(\Omega_T)} +  T \| \partial_t u\|_{L_{p, q, w}(\Omega_T)} \right).
\end{equation}
\end{remark}

\subsection{Divergence case}
	\label{subsec_div}
In this subsection, we introduce definitions and properties for time fractional derivatives associated with time-fractional evolution equations in divergence form.
The reader may skip this subsection since its content closely parallels that of Section \ref{subsec_nondiv}.

Let $p, q \in (1, \infty)$. For a given weight function $w(t,x) = t^{\mu}w_2(x)$ with $\mu \in (-1, q-1)$ and $w_2 \in A_p(\bR^d)$, we set
\[
w'(t, x) := w_1'(t)w_2'(x) = w_1^{1-q'}(t)w_2^{1-p'}(x) = t^{\mu(1-q')} w_2^{1-p'}(x)
\] 
with $1/p+1/p' = 1/q + 1/q' = 1$. Note that $\mu(1-q') \in (-1, q'-1)$ and $w_2^{1-p'} \in A_{p'}(\bR^d)$. 
We say $u \in \bH_{p, q, w}^{-1}(\Omega_T)$ if $u$ (as a distribution on $\Omega_T$) is a bounded linear functional on 
\[
\mathring{W}_{p',  q', w'}^{0,1}(\Omega_T) = \{v, Dv \in L_{p',  q', w'}(\Omega_T),v(t,x)|_{(0,T) \times \partial \Omega} = 0\}
\]
such that the evaluation of $u$ at $\varphi \in \mathring{W}_{p',  q', w'}^{0,1}(\Omega_T)$, denoted by $\langle u, \varphi \rangle_{\bH_{p, q, w}^{-1}(\Omega_T)}$, is given by   
\begin{equation}
							\label{eq0512_01}
 \langle u, \varphi \rangle_{\bH_{p, q, w}^{-1}(\Omega_T)} = \int_{\Omega_T} \left(F \varphi - G_i D_i \varphi \right) \, \mathrm{d}x \, \mathrm{d}t
\end{equation}
for some $G_i, F \in L_{p, q, w}(\Omega_T)$.
In this case we can write
\[
u = D_i G_i + F.
\]
If $u \in L_{p,q,w}(\Omega_T)$, then
\[
\langle u, \varphi \rangle_{\bH_{p, q, w}^{-1}(\Omega_T)} = \int_{\Omega_T} u \varphi  \, \mathrm{d}x \, \mathrm{d}t
\]
for $\varphi(t,x) \in \mathring{W}_{p',  q', w'}^{0,1}(\Omega_T)$, so that $L_{p, q, w}(\Omega_T) \subset \bH_{p, q, w}^{-1}(\Omega_T)$.

The norm of $\bH_{p, q, w}^{-1}(\Omega_T)$ is defined by
\[
\|u\|_{\bH_{p, q, w}^{-1}(\Omega_T)} = \inf \{ \|F\|_{L_{p, q, w}(\Omega_T)} + \|G_i\|_{L_{p, q, w}(\Omega_T)}: u = D_i G_i + F\}.
\]
Note that the linear functional $\varphi \to \langle u, \varphi\rangle_{\bH_{p, q, w}^{-1}(\Omega_T)}$ on $\mathring{W}_{p', q', w'}^{0,1}(\Omega_T)$ is uniquely determined if \eqref{eq0512_01} is determined for all $\varphi \in C_0^\infty((0,T) \times \Omega)$.
Then, we consider a linear functional
\begin{equation}
							\label{eq0514_01}
\varphi \to \langle u, J^{1-\alpha} \varphi \rangle_{\bH_{p, q, w}^{-1}(\Omega_T)}
\end{equation}
for $\varphi \in C_0^\infty((0,T) \times \Omega)$.
Because
\[
\langle u, J^{1-\alpha} \varphi \rangle_{\bH_{p, q, w}^{-1}(\Omega_T)} = \int_{\Omega_T} \left( (I^{1-\alpha}F) \varphi - (I^{1-\alpha} G_i) D_i \varphi \right) \, \mathrm{d}x \, \mathrm{d}t
\]
and $I^{1-\alpha}F, I^{1-\alpha}G_i \in L_{p, q, w}(\Omega_T)$ (see Lemma \ref{lem0224}), the linear functional in \eqref{eq0514_01} can be uniquely extended to all $\varphi \in \mathring{W}_{p', q', w'}^{0,1}(\Omega_T)$.
We denote this linear functional on $\mathring{W}_{p', q', w'}^{0,1}(\Omega_T)$ by $I^{1-\alpha} u$ so that $I^{1-\alpha} u \in \bH_{p, q, w}^{-1}(\Omega_T)$ and
\begin{equation}
							\label{eq0512_05}
\langle I^{1-\alpha} u, \varphi \rangle_{\bH_{p, q, w}^{-1}(\Omega_T)} = \int_{\Omega_T} \left( (I^{1-\alpha}F) \varphi - (I^{1-\alpha} G_i) D_i \varphi \right) \, \mathrm{d}x \, \mathrm{d}t.
\end{equation}

When the spatial domain $\Omega$ is concerned, we define $H_{p, w_2}^{-1}(\Omega)$ in a similar way to defining $\bH_{p, q, w}^{-1}(\Omega_T)$.
In particular, if $u_0 \in H_{p, w_2}^{-1}(\Omega)$ and
\begin{equation}
							\label{eq0512_04}
\langle u_0, \phi \rangle_{H_{p, w_2}^{-1}(\Omega)} = \int_\Omega \left( f \phi - g_i D_i \phi \right) \, \mathrm{d}x
\end{equation}
for $\phi \in C_0^\infty(\Omega)$, where $f, g_i \in L_{p, w_2}(\Omega)$, then, as an element of $\bH_{p, q, w}^{-1}(\Omega_T)$,
\begin{equation}
							\label{eq0512_03}
\langle u_0, \varphi \rangle_{\bH_{p, q, w}^{-1}(\Omega_T)} = \int_{\Omega_T} \left( f(x) \varphi(t,x) - g_i(x) D_i \varphi(t,x) \right) \, \mathrm{d}x \, \mathrm{d}t.
\end{equation}

\begin{definition}
	\label{def0513_6}
Let $\alpha \in (0, 1)$, $p, q \in (1, \infty)$ and $w(t, x) = w_1(t)w_2(x) = t^{\mu}w_2(x)$, where $\mu \in (-1, q-1)$ and $w_2 \in A_p(\bR^d)$.
We define $\cH_{p, q, w, 0}^{\alpha,-1}(\Omega_T)$ and $\cH_{p, q, w}^{\alpha,-1}(\Omega_T)$ as follows.
\begin{enumerate}[(i)]
\item By $u \in \cH_{p, q, w, 0}^{\alpha,-1}(\Omega_T)$ we mean that $u \in \bH_{p, q, w}^{-1}(\Omega_T)$ and there exist $g_i, f \in L_{p, q, w}(\Omega_T)$ such that
\begin{equation}
							\label{eq0601_01}
\langle I^{1-\alpha} u, \varphi_t \rangle_{\bH_{p, q, w}^{-1}(\Omega_T)} = \int_{\Omega_T} \left(g_i D_i \varphi - f \varphi \right) \, \mathrm{d}x \, \mathrm{d}t
\end{equation}
for $\varphi \in  C_0^\infty\left([0,T) \times \Omega\right)$.
In this case, in the distribution sense, we clearly have
\[
\partial_t I^{1-\alpha} u = D_i g_i + f \quad \text{in} \,\, \Omega_T,
\]
which, as a linear functional, further satisfies
\[
\langle \partial_t I^{1-\alpha}u, \varphi \rangle_{\bH_{p, q, w}^{-1}(\Omega_T)} = \int_{\Omega_T} \left(f \varphi - g_i D_i \varphi\right) \, \mathrm{d}x \, \mathrm{d}t = -\langle I^{1-\alpha} u, \varphi_t \rangle_{\bH_{p, q, w}^{-1}(\Omega_T)}
\]
for $\varphi \in \mathring{W}_{p', q', w'}^{0,1}(\Omega_T)$, so $\partial_t I^{1-\alpha}u \in \bH_{p, q, w}^{-1}(\Omega_T)$.

\item By $u \in \cH_{p, q, w}^{\alpha,-1}(\Omega_T)$ we mean that $u \in \bH_{p, q, w}^{-1}(\Omega_T)$ and there exists $u_0 \in H_{p, w_2}^{-1}(\Omega)$ such that
\[
u - u_0 \in \cH_{p, q, w, 0}^{\alpha,-1}(\Omega_T).
\]
\end{enumerate}
We again suppress $q$ and $w$ if $p=q$ and $w = 1$.
\end{definition}

\begin{lemma}
				\label{lem04131517_1}
Let $\alpha \in (0,(1+\mu)/q)$ and $u_0 \in H_{p, w_2}^{-1}(\Omega)$.
Then, $u_0 \in \cH_{p, q, w, 0}^{\alpha,-1}(\Omega_T)$.
Thus, $\cH_{p, q, w, 0}^{\alpha,-1}(\Omega_T) = \cH_{p, q, w}^{\alpha,-1}(\Omega_T)$ for $\alpha \in (0,(1+\mu)/q)$.
\end{lemma}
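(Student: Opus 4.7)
The plan is to mirror the proof of Lemma \ref{lem0513_1} from the non-divergence case, handling only the additional bookkeeping required by the $\bH_{p,q,w}^{-1}$-pairing formalism. Since $u_0 \in H_{p,w_2}^{-1}(\Omega)$, I first use \eqref{eq0512_04} to write $u_0 = D_i g_i^{(0)} + f^{(0)}$ with $f^{(0)}, g_i^{(0)} \in L_{p,w_2}(\Omega)$, and then regard $u_0$ as an element of $\bH_{p,q,w}^{-1}(\Omega_T)$ via \eqref{eq0512_03}.

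Next, I compute $I^{1-\alpha}u_0$ as an element of $\bH_{p,q,w}^{-1}(\Omega_T)$ using the definition \eqref{eq0512_05}. Because $f^{(0)}$ and $g_i^{(0)}$ are independent of $t$, an explicit evaluation gives
\[
I^{1-\alpha} f^{(0)}(t,x) = \frac{t^{1-\alpha}}{(1-\alpha)\Gamma(1-\alpha)} f^{(0)}(x), \qquad I^{1-\alpha} g_i^{(0)}(t,x) = \frac{t^{1-\alpha}}{(1-\alpha)\Gamma(1-\alpha)} g_i^{(0)}(x).
\]
Now I verify the defining identity \eqref{eq0601_01}. For $\varphi \in C_0^\infty([0,T)\times\Omega)$, I plug the above into \eqref{eq0512_05} applied to $\varphi_t$, and then integrate by parts in $t$. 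The boundary terms vanish because $t^{1-\alpha}|_{t=0} = 0$ and $\varphi(T,x) = 0$. This produces
\[
\langle I^{1-\alpha}u_0, \varphi_t\rangle_{\bH_{p,q,w}^{-1}(\Omega_T)} = \int_{\Omega_T}\left( \frac{t^{-\alpha}}{\Gamma(1-\alpha)} g_i^{(0)}(x) D_i\varphi - \frac{t^{-\alpha}}{\Gamma(1-\alpha)} f^{(0)}(x) \varphi \right)\,\mathrm{d}x\,\mathrm{d}t,
\]
so that \eqref{eq0601_01} is satisfied with
\[
f(t,x) := \frac{t^{-\alpha}}{\Gamma(1-\alpha)} f^{(0)}(x), \qquad g_i(t,x) := \frac{t^{-\alpha}}{\Gamma(1-\alpha)} g_i^{(0)}(x).
\]

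The crucial integrability check is that $f, g_i \in L_{p,q,w}(\Omega_T)$: since $w(t,x) = t^{\mu}w_2(x)$ and $f^{(0)}, g_i^{(0)} \in L_{p, w_2}(\Omega)$, membership reduces to $\int_0^T t^{-\alpha q + \mu}\,\mathrm{d}t < \infty$, which holds precisely because $\alpha < (1+\mu)/q$ gives $-\alpha q + \mu > -1$. This is exactly the step where the hypothesis on $\alpha$ enters, and it is the only real subtlety in the argument. By Definition \ref{def0513_6}(i), this establishes $u_0 \in \cH_{p,q,w,0}^{\alpha,-1}(\Omega_T)$.

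For the second assertion, I take arbitrary $u \in \cH_{p,q,w}^{\alpha,-1}(\Omega_T)$ and, by Definition \ref{def0513_6}(ii), select $u_0 \in H_{p,w_2}^{-1}(\Omega)$ with $u - u_0 \in \cH_{p,q,w,0}^{\alpha,-1}(\Omega_T)$. Since $\cH_{p,q,w,0}^{\alpha,-1}(\Omega_T)$ is a linear space and, by the first part, $u_0$ itself lies in it, it follows that $u = (u-u_0) + u_0 \in \cH_{p,q,w,0}^{\alpha,-1}(\Omega_T)$, giving the claimed equality of spaces. I do not anticipate any genuine obstacle; the main thing to watch is that the integration by parts is carried out at the level of the linear functional \eqref{eq0512_05} rather than pointwise, which is why I keep the calculation inside the $\bH_{p,q,w}^{-1}$-pairing throughout.
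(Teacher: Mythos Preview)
Your proposal is correct and follows essentially the same approach as the paper: both represent $u_0$ via \eqref{eq0512_04}, compute $I^{1-\alpha}u_0$ through \eqref{eq0512_05}, integrate by parts in $t$ using $t^{1-\alpha}|_{t=0}=0$ and $\varphi(T,\cdot)=0$, and check that the resulting $t^{-\alpha}f^{(0)}$, $t^{-\alpha}g_i^{(0)}$ lie in $L_{p,q,w}(\Omega_T)$ precisely because $\alpha q < 1+\mu$. The second assertion is handled identically (the paper simply refers back to the argument in Lemma~\ref{lem0513_1}, which is exactly what you wrote out).
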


\begin{proof}
Let $u_0 \in H_{p, w_2}^{-1}(\Omega)$ satisfy \eqref{eq0512_04}.
Then, as noted in \eqref{eq0512_03}, $u_0 \in \bH_{p, q, w}^{-1}(\Omega_T)$.
Note that
\[
I^{1-\alpha}f(x) = \frac{t^{1-\alpha}}{(1-\alpha)\Gamma(1-\alpha)} f(x), \quad \partial_t I^{1-\alpha} f(x) = \frac{t^{-\alpha}}{\Gamma(1-\alpha)} f(x) \in L_{p, q, w}(\Omega_T),
\]
because $\alpha q < 1 + \mu$, and the same observation can be made for $g_i$.
From \eqref{eq0512_03} and \eqref{eq0512_05} along with the above calculations of $I^{1-\alpha}f$ and $I^{1-\alpha} g_i$, it follows that $I^{1-\alpha} u_0 \in \bH_{p, q, w}^{-1}(\Omega_T)$ and
\[
\langle I^{1-\alpha} u_0, \varphi \rangle_{\bH_p^{-1}(\Omega_T)} = \frac{1}{(1-\alpha)\Gamma(1-\alpha)}\int_{\Omega_T} \left( t^{1-\alpha} f(x) \varphi - t^{1-\alpha} g_i(x) D_i \varphi \right) \, \mathrm{d}x \, \mathrm{d}t
\]
for $\varphi \in \mathring{W}_{p', q', w'}^{0,1}(\Omega_T)$.
Let $\varphi \in C_0^\infty([0,T) \times \Omega)$.
By replacing $\varphi$ with $\varphi_t$ in the above equality, and using integration by parts and the fact that $t^{1-\alpha}|_{t=0} = 0$, we obtain that
\begin{equation*}
%							\label{eq0514_02}
- \langle I^{1-\alpha} u_0, \varphi_t \rangle_{\bH_p^{-1}(\Omega_T)} = \frac{1}{\Gamma(1-\alpha)}\int_{\Omega_T} \left( t^{-\alpha} f(x) \varphi - t^{-\alpha} g_i(x) D_i \varphi \right) \, \mathrm{d}x \, \mathrm{d}t
\end{equation*}
for $\varphi \in C_0^\infty([0,T) \times \Omega)$.
This proves that $u_0 \in \cH_{p, q, w, 0}^{\alpha,-1}(\Omega_T)$.

It then follows as in the proof of Lemma \ref{lem0513_1} that  $\cH_{p, q, w, 0}^{\alpha,-1}(\Omega_T) = \cH_{p, q, w}^{\alpha,-1}(\Omega_T)$ for $\alpha \in (0,(1+\mu)/q)$. 
\end{proof}

\begin{lemma}
							\label{lem0526_1}
Let $\alpha \in [(1+\mu)/q,1)$ and $u_0 \in H_{p, w_2}^{-1}(\Omega)$.
If $u_0 \in \cH_{p, q, w, 0}^{\alpha,-1}(\Omega_T)$, then $u_0 = 0$.
Thus, for $u \in \cH_{p, q, w}^{\alpha,-1}(\Omega_T)$, there exists a unique $u_0 \in H_{p, w_2}^{-1}(\Omega)$ such that $u-u_0 \in \cH_{p, q, w, 0}^{\alpha,-1}(\Omega_T)$ when $\alpha \in [(1+\mu)/q,1)$.
\end{lemma}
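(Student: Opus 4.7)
The approach parallels that of Lemma~\ref{lem0513_2}: I will show that $\partial_t I^{1-\alpha}u_0$ coincides, as a distribution on $(0,T)\times\Omega$, with $\frac{t^{-\alpha}}{\Gamma(1-\alpha)}u_0$, and then use that $t^{-\alpha}\notin L_{q,w_1}(0,T)$ when $\alpha\ge(1+\mu)/q$ to force $u_0=0$. The new wrinkle compared with the non-divergence case is that $u_0\in H_{p,w_2}^{-1}(\Omega)$ admits only a non-unique representation $\langle u_0,\phi\rangle=\int_\Omega(f_0\phi-h_i^0 D_i\phi)\,\mathrm{d}x$ with $f_0,h_i^0\in L_{p,w_2}(\Omega)$, so one has to argue at the level of distributional pairings rather than with pointwise $L_p$ objects.

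Starting from the identification in \eqref{eq0512_03} of $u_0$ as a $t$-independent element of $\bH_{p,q,w}^{-1}(\Omega_T)$, I apply \eqref{eq0512_05} together with the explicit formulas $I^{1-\alpha}f_0=\tfrac{t^{1-\alpha}}{(1-\alpha)\Gamma(1-\alpha)}f_0$ and its analogue for $h_i^0$; integrating by parts in $t$ (using $t^{1-\alpha}\to 0$ as $t\to 0$ and the vanishing of $\varphi$ at $t=T$), this yields, for every $\varphi\in C_0^\infty((0,T)\times\Omega)$,
\[
\langle\partial_t I^{1-\alpha}u_0,\varphi\rangle_{\bH_{p,q,w}^{-1}(\Omega_T)}=\frac{1}{\Gamma(1-\alpha)}\int_{\Omega_T}t^{-\alpha}\bigl(f_0(x)\varphi-h_i^0(x)D_i\varphi\bigr)\,\mathrm{d}x\,\mathrm{d}t.
\]
Meanwhile, Definition~\ref{def0513_6}(i) supplies $g_i,f\in L_{p,q,w}(\Omega_T)$ with $\partial_t I^{1-\alpha}u_0=D_ig_i+f$, providing a second distributional representation of the same object.

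Testing the resulting equality against separated functions $\varphi(t,x)=\eta(t)\phi(x)$ with $\eta\in C_0^\infty((0,T))$ and $\phi\in C_0^\infty(\Omega)$, and setting $A_\phi(t):=\int_\Omega\bigl(f(t,\cdot)\phi-g_i(t,\cdot)D_i\phi\bigr)\,\mathrm{d}x$, H\"older's inequality in $x$ (using the dual pair $L_{p,w_2}$ and $L_{p',w_2^{1-p'}}$) places $A_\phi\in L_{q,w_1}(0,T)$, and the identity reduces to
\[
\int_0^T\eta(t)A_\phi(t)\,\mathrm{d}t=\frac{\langle u_0,\phi\rangle_{H_{p,w_2}^{-1}(\Omega)}}{\Gamma(1-\alpha)}\int_0^T t^{-\alpha}\eta(t)\,\mathrm{d}t,\qquad\eta\in C_0^\infty((0,T)).
\]
Since both $A_\phi$ and $t^{-\alpha}$ are locally integrable on $(0,T)$, this forces $A_\phi(t)=c_\phi\, t^{-\alpha}$ a.e.\ with $c_\phi=\langle u_0,\phi\rangle/\Gamma(1-\alpha)$. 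The weight condition $\alpha q\ge 1+\mu$ makes $t^{-\alpha}\notin L_{q,w_1}(0,T)$ (as already observed in the proof of Lemma~\ref{lem0513_2}), while $A_\phi\in L_{q,w_1}(0,T)$, so $c_\phi=0$; varying $\phi$ gives $u_0=0$.

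Uniqueness follows exactly as in Lemma~\ref{lem0513_2}: if $u_0,v_0\in H_{p,w_2}^{-1}(\Omega)$ both satisfy $u-u_0,u-v_0\in\cH_{p,q,w,0}^{\alpha,-1}(\Omega_T)$, then $u_0-v_0\in H_{p,w_2}^{-1}(\Omega)\cap\cH_{p,q,w,0}^{\alpha,-1}(\Omega_T)$, and the first part forces $u_0=v_0$. The main obstacle is the first step: verifying that the distributional formula $\partial_t I^{1-\alpha}u_0=t^{-\alpha}u_0/\Gamma(1-\alpha)$ is independent of the non-unique choice of representative $(f_0,h_i^0)$ for $u_0$. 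Pairing against separated test functions $\eta\otimes\phi$ sidesteps this issue because only the intrinsic quantity $\langle u_0,\phi\rangle$ survives in the final scalar identity.
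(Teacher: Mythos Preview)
Your proof is correct and follows essentially the same approach as the paper. Both arguments compute $\partial_t I^{1-\alpha}u_0$ explicitly as $\tfrac{t^{-\alpha}}{\Gamma(1-\alpha)}u_0$ in the distributional sense, then exploit $t^{-\alpha}\notin L_{q,w_1}(0,T)$ when $\alpha\ge(1+\mu)/q$ to contradict the existence of an $L_{p,q,w}$ representation of $\partial_t I^{1-\alpha}u_0$ unless $u_0=0$; the only cosmetic difference is that the paper constructs an explicit sequence $\varphi_k(t,x)=\phi(x)t^{\mu/q}\eta_k(t)$ with $\|\eta_k\|_{L_{q'}}\le 1$ to exhibit unboundedness directly, whereas you identify $A_\phi(t)=c_\phi t^{-\alpha}$ a.e.\ via the fundamental lemma and then invoke the integrability obstruction.
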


\begin{proof}
Let $u_0 \in H_{p, w_2}^{-1}(\Omega)$ satisfy \eqref{eq0512_04}.
Then, from \eqref{eq0512_05} and \eqref{eq0512_03},
$I^{1-\alpha} u_0 \in \bH_{p, q, w}^{-1}(\Omega_T)$ and
\begin{equation}
							\label{eq0513_06}
\langle I^{1-\alpha} u_0, \varphi \rangle_{\bH_{p, q, w}^{-1}(\Omega_T)} = \frac{1}{(1-\alpha)\Gamma(1-\alpha)} \int_{\Omega_T} \left( t^{1-\alpha} f(x) \varphi - t^{1-\alpha} g_i(x) D_i \varphi \right) \, \mathrm{d}x \, \mathrm{d}t
\end{equation}
for $\varphi \in \mathring{W}_{p', q', w'}^{0,1}(\Omega_T)$,
where $t^{1-\alpha}f(x), t^{1-\alpha}g_i(x) \in L_{p, q, w}(\Omega_T)$.
Since $u_0 \in \cH_{p, q, w, 0}^{\alpha,-1}(\Omega_T)$, from the definition we have
\[
\langle \partial_t I^{1-\alpha} u_0, \varphi \rangle_{\bH_{p, q, w}^{-1}(\Omega_T)} = - \langle I^{1-\alpha} u_0, \varphi_t \rangle_{\bH_{p, q, w}^{-1}(\Omega_T)}
\]
for $\varphi \in C_0^\infty\left([0,T) \times \Omega\right)$.
From this, \eqref{eq0513_06}, and integration by parts it follows that
\begin{equation}
							\label{eq0513_07}
\langle \partial_t I^{1-\alpha} u_0, \varphi \rangle_{\bH_{p, q, w}^{-1}(\Omega_T)} = \frac{1}{\Gamma(1-\alpha)} \int_{\Omega_T} \left( t^{-\alpha} f\left( x \right) \varphi - t^{-\alpha} g_i\left( x \right) D_i \varphi \right) \, \mathrm{d}x \, \mathrm{d}t
\end{equation}
for $\varphi \in C_0^\infty\left([0,T) \times \Omega\right)$. 
Set
\[
\varphi_k(t,x) = \phi(x) t^{\mu/q} \eta_k(t), \quad \phi(x) \in C_0^\infty(\Omega), \quad \eta_k(t) \in C^\infty_0(0,T).
\]
Note that $ t^{\mu/q} \eta_k(t) \in C^\infty_0(0,T)$.
Then, from \eqref{eq0513_07} we see that
\[
\langle \partial_t I^{1-\alpha} u_0, \varphi_k \rangle_{\bH_{p, q, w}^{-1}(\Omega_T)} = \frac{1}{\Gamma(1-\alpha)} \langle u_0, \phi \rangle_{H_{p, w_2}^{-1}(\Omega)} \int_0^T t^{-\alpha + \mu/q} \eta_k(t) \, \mathrm{d}t.
\]
Since $t^{-\alpha + \mu/q} \notin L_q(0,T)$, it is possible to find $\eta_k \in C_0^\infty(0,T)$ such that $\|\eta_k\|_{L_{q'}(0,T)} \leq 1$, $1/q+1/q'=1$, but
\[
\int_0^T t^{-\alpha + \mu/q}  \eta_k(t) \, \mathrm{d}t  \geq k \,\, \nearrow \infty \quad \textrm{as} \quad k \to \infty.
\]
This means that $\langle \partial_t I^{1-\alpha} u_0, \varphi_k \rangle_{\bH_{p, q, w}^{-1}(\Omega_T)}$ is unbounded unless $u_0 = 0$.

On the other hand,
since $u_0 \in \cH_{p, q, w, 0}^{\alpha,-1}(\Omega_T)$, by the definition, there exist $\tilde{f}(t,x), \tilde{g}_i(t,x) \in L_{p, q, w}(\Omega_T)$ such that
\[
\langle \partial_t I^{1-\alpha} u_0, \varphi \rangle_{\bH_{p, q, w}^{-1}(\Omega_T)} = \int_{\Omega_T} \left(\tilde{f}(t,x) \varphi - \tilde{g}_i(t,x) D_i \varphi\right) \, \mathrm{d}x \, \mathrm{d}t
\]
for $\varphi \in \mathring{W}_{p', q', w'}^{0,1}(\Omega_T)$.
From this, we see that if $\varphi_k(t,x) \in  C_0^\infty\left((0,T) \times \Omega\right)$ with
\[
\|\varphi_k\|_{L_{p', q', w'}(\Omega_T)} + \|D\varphi_k\|_{L_{p', q', w'}(\Omega_T)} \leq M, \quad k = 1, 2, \ldots
\]
for some positive number $M$,
then $\langle \partial_t I^{1-\alpha} u_0, \varphi_k \rangle_{\bH_{p, q, w}^{-1}(\Omega_T)}$ is also bounded in $k$.
Considering $\varphi_k(t,x) = \phi(x) t^{\mu/q}\eta_k(t)$ given above, one verifies
\[
\|\varphi_k\|_{L_{p', q', w'}(\Omega_T)} + \|D\varphi_k\|_{L_{p', q', w'}(\Omega_T)}
\]
\[
 =  \left( \| \phi \|_{L_{p', w_2'}(\Omega)} + \| D\phi \|_{L_{p', w_2'}(\Omega)} \right) \left( \int_0^T |t^{\mu/q } \eta_k(t)|^{q'} w_1'(t)\,\mathrm{d}t \right)^{1/q'}
\]
\[
= \left( \| \phi \|_{L_{p', w_2'}(\Omega)} + \| D\phi \|_{L_{p', w_2'}(\Omega)} \right) \| \eta_k\|_{L_{q'}(0, T)} \le \| \phi \|_{L_{p', w_2'}(\Omega)} + \| D\phi \|_{L_{p', w_2'}(\Omega)}.
\]
We therefore conclude that $u_0 = 0$ if $u_0 \in \cH_{p, q, w, 0}^{\alpha,-1}(\Omega_T)$.

The uniqueness of $u_0 \in H_{p, w_2}^{-1}(\Omega)$ for $u \in \cH_{p, q, w}^{\alpha,-1}(\Omega_T)$ with $u-u_0 \in \cH_{p, q, w, 0}^{\alpha,-1}(\Omega_T)$ follows as in the proof of Lemma \ref{lem0513_2}.
\end{proof}

We now define $\partial_t^\alpha u$ and the norm of $\cH_{p, q, w}^{\alpha,-1}(\Omega_T)$ as in Definition \ref{def_0223_1}.

\begin{definition}
\label{def_0223_6}
Let $p, q \in (1, \infty)$ and $w(t, x) = w_1(t)w_2(x) = t^{\mu}w_2(x)$, where $\mu \in (-1, q-1)$ and $w_2 \in A_p(\bR^d)$.
For $\alpha \in (0,(1+\mu)/q)$ and $u \in \cH_{p, q, w}^{\alpha,-1}(\Omega_T)$, we denote
\[
\partial_t^\alpha u = \partial_t I^{1-\alpha} u.
\]
For $\alpha \in [(1+\mu)/q,1)$ and $u \in \cH_{p, q, w}^{\alpha,-1}(\Omega_T)$, by Lemma \ref{lem0526_1} there exists a unique $u_0 \in H_{p, w_2}^{-1}(\Omega)$ such that $u-u_0 \in \cH_{p, q, w, 0}^{\alpha,-1}(\Omega_T)$. In this case, we denote
\[
\partial_t^\alpha u = \partial_t I^{1-\alpha}(u-u_0).
\]
For both cases, the norm of $\cH_{p, q, w}^{\alpha,-1}(\Omega_T)$ is defined by
\begin{equation*}
%							\label{eq0512_02}
\|u\|_{\cH_{p, q, w}^{\alpha,-1}(\Omega_T)} = \|u\|_{\bH_{p, q, w}^{-1}(\Omega_T)} + \|\partial_t^\alpha u\|_{\bH_{p, q, w}^{-1}(\Omega_T)}.
\end{equation*}
\end{definition}

\begin{remark}
If $u \in \cH_{p, q, w}^{\alpha,-1}(\Omega_T)$ satisfies
\[
\langle \partial_t I^{1-\alpha}(u-u_0), \varphi \rangle_{\bH_{p, q, w}^{-1}(\Omega_T)} = \int_{\Omega_T} (f \varphi - g_i D_i \varphi) \, \mathrm{d}x \, \mathrm{d}t
\]
for $\varphi \in \mathring{W}_{p', q', w'}^{0,1}(\Omega_T)$, then one can write
\[
\partial_t^\alpha u = D_i g_i + f.
\]
For $u \in \cH_{p, q, w, 0}^{\alpha,-1}(\Omega_T)$, we see that
\[
\partial_t^\alpha u = \partial_t I^{1-\alpha}u.
\]
Regarding the norm for $\cH_{p, q, w, 0}^{\alpha,-1}(\Omega_T)$, we use the same norm $\|\cdot\|_{\cH_{p, q, w}^{\alpha,-1}(\Omega_T)}$ (and the same notation).
\end{remark}

\begin{lemma}
							\label{lem0517_3}
Let $\alpha \in (0,1)$ and $u \in \cH_{p, q, w, 0}^{\alpha,-1}(\Omega_T)$.
Then $u = I^{\alpha} \partial_t I^{1-\alpha} u$ in $\bH_{p, q, w}^{-1}(\Omega_T)$ and
\[
\|u\|_{\bH_{p, q, w}^{-1}(\Omega_T)} \leq N T^\alpha \|\partial_t^\alpha u\|_{\bH_{p, q, w}^{-1}(\Omega_T)},
\]
where $N = N(\alpha,q, \mu) > 0$.
Thus, if $\alpha \in [(1+\mu)/q,1)$ and $u \in \cH_{p, q, w}^{\alpha,-1}(\Omega_T)$ with $u-u_0 \in \cH_{p, q, w, 0}^{\alpha,-1}(\Omega_T)$,
then
\[
\|u\|_{\bH_{p, q, w}^{-1}(\Omega_T)} \leq N T^\alpha \|\partial_t^\alpha u\|_{\bH_{p, q, w}^{-1}(\Omega_T)} + N T^{(1+\mu)/q} \|u_0\|_{H_{p, q, w}^{-1}(\Omega)},
\]
where $N = N(\alpha,q, \mu) > 0$.
\end{lemma}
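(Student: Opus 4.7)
The statement is the divergence-form analogue of Lemma \ref{lem0517_2}, so I would mirror its proof with the pairing against test functions replaced by the distributional pairing against $\mathring{W}_{p',q',w'}^{0,1}(\Omega_T)$. The first step is to establish the identity $u = I^\alpha \partial_t I^{1-\alpha} u$ in $\bH_{p,q,w}^{-1}(\Omega_T)$. Fix $\varphi \in C_0^\infty((0,T)\times\Omega)$. By Lemma \ref{lem0517_1}, $J^\alpha \varphi \in C_0^\infty([0,T)\times\Omega)$ and $\partial_t J^\alpha \varphi = J^\alpha \partial_t \varphi$. Unfolding the definitions of $I^\alpha$ on $\bH_{p,q,w}^{-1}$ (i.e.\ $\langle I^\alpha F,\varphi\rangle = \langle F, J^\alpha \varphi\rangle$) and using \eqref{eq0601_01} with $J^\alpha\varphi$ as the test function, I would compute
\[
\langle I^\alpha \partial_t I^{1-\alpha} u,\varphi\rangle = \langle \partial_t I^{1-\alpha}u, J^\alpha\varphi\rangle = -\langle I^{1-\alpha}u, J^\alpha \partial_t\varphi\rangle = -\langle u, J^{1-\alpha} J^\alpha \partial_t\varphi\rangle = -\langle u, J\partial_t\varphi\rangle,
\]
and then note that $J\partial_t\varphi(t,x) = \int_t^T \partial_s\varphi(s,x)\,\mathrm{d}s = -\varphi(t,x)$ because $\varphi(T,x)=0$. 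This yields $\langle I^\alpha \partial_t I^{1-\alpha}u,\varphi\rangle = \langle u,\varphi\rangle$ for all $\varphi \in C_0^\infty((0,T)\times\Omega)$, which identifies the two functionals on $\bH_{p,q,w}^{-1}(\Omega_T)$.

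For the norm estimate, let $\partial_t^\alpha u = \partial_t I^{1-\alpha}u = D_i g_i + f$ be any representation with $g_i, f \in L_{p,q,w}(\Omega_T)$. The identity from the previous step, combined with the fact that $I^\alpha$ commutes with $D_i$ (both act only in $t$ or $x$ respectively), gives the representation
\[
u = D_i (I^\alpha g_i) + I^\alpha f \quad \text{in} \quad \bH_{p,q,w}^{-1}(\Omega_T).
\]
Applying Minkowski's inequality in the spatial variable followed by Lemma \ref{lem0224} (together with the trivial bound $\|I^\alpha h\|_{L_{p,q,w}} \le T^\alpha \|t^{-\alpha} I^\alpha h\|_{L_{p,q,w}}$), I obtain $\|I^\alpha g_i\|_{L_{p,q,w}(\Omega_T)} + \|I^\alpha f\|_{L_{p,q,w}(\Omega_T)} \le N T^\alpha (\|g_i\|_{L_{p,q,w}(\Omega_T)} + \|f\|_{L_{p,q,w}(\Omega_T)})$. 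Taking the infimum over all representations of $\partial_t^\alpha u$ yields the first inequality.

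For the second assertion, I would split $u = (u - u_0) + u_0$. Applying the first assertion to $u - u_0 \in \cH_{p,q,w,0}^{\alpha,-1}(\Omega_T)$ controls its norm by $NT^\alpha \|\partial_t^\alpha u\|_{\bH_{p,q,w}^{-1}(\Omega_T)}$. For the $u_0$ piece, I would use the representation \eqref{eq0512_04} for $u_0 \in H_{p,w_2}^{-1}(\Omega)$ and compare with \eqref{eq0512_03}: since $f(x)$ and $g_i(x)$ are $t$-independent,
\[
\|u_0\|_{\bH_{p,q,w}^{-1}(\Omega_T)} \le \|f\|_{L_{p,q,w}(\Omega_T)} + \sum_i \|g_i\|_{L_{p,q,w}(\Omega_T)} = \Bigl(\tfrac{T^{1+\mu}}{1+\mu}\Bigr)^{1/q}\bigl(\|f\|_{L_{p,w_2}(\Omega)} + \textstyle\sum_i \|g_i\|_{L_{p,w_2}(\Omega)}\bigr),
\]
and taking the infimum over such representations gives the $NT^{(1+\mu)/q}\|u_0\|_{H_{p,w_2}^{-1}(\Omega)}$ term.

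I expect the main technical subtlety to lie in the first step: one must verify that the pairings $\langle I^\alpha F,\varphi\rangle = \langle F,J^\alpha\varphi\rangle$ and the integration-by-parts step extend from $C_0^\infty((0,T)\times\Omega)$ to the larger test space $C_0^\infty([0,T)\times\Omega)$ where $J^\alpha\varphi$ lives, and that all pairings sit inside $\mathring W_{p',q',w'}^{0,1}$ so that \eqref{eq0512_05} and \eqref{eq0601_01} are applicable. The verification is routine, since $J^{1-\alpha}$ and $J^\alpha$ map the relevant weighted Sobolev spaces back into themselves by Lemma \ref{lem0224} with weight $w'$, but this point (rather than the Hardy-type estimates, which are purely mechanical) deserves care.
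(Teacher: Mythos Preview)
Your proposal is correct and follows essentially the same route as the paper's proof: the identity $u = I^\alpha \partial_t I^{1-\alpha} u$ is obtained by the same duality computation with $J^\alpha\varphi$ and Lemma~\ref{lem0517_1} (you move both fractional integrals to the test-function side via $J^{1-\alpha}J^\alpha = J^1$, whereas the paper moves them to the $u$ side via $I^\alpha I^{1-\alpha}$, but this is the same manipulation), and the norm estimates are derived exactly as you describe, via the representation $u = D_i(I^\alpha g_i) + I^\alpha f$ and Lemma~\ref{lem0224}. Your remark on the technical subtlety---checking that $J^\alpha\varphi$ lands in the right test space so that \eqref{eq0512_05} and \eqref{eq0601_01} apply---is apt and matches what the paper uses.
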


\begin{proof}
For $u \in \cH_{p, q, w, 0}^{\alpha,-1}(\Omega_T)$, using Lemma \ref{lem0517_1} with \eqref{eq0512_05}, we have
\[
\langle I^{\alpha}\partial_t I^{1-\alpha} u, \varphi \rangle_{\bH_{p, q, w}^{-1}(\Omega_T)} = \langle \partial_t I^{1-\alpha} u, J^\alpha \varphi \rangle_{\bH_{p, q, w}^{-1}(\Omega_T)} = - \langle  I^{1-\alpha} u,  \partial_t J^\alpha \varphi \rangle_{\bH_{p, q, w}^{-1}(\Omega_T)}
\]
\[
= - \langle  I^{1-\alpha} u,  J^\alpha \partial_t \varphi \rangle_{\bH_{p, q, w}^{-1}(\Omega_T)} = - \langle I^{\alpha} I^{1-\alpha} u, \partial_t \varphi \rangle_{\bH_{p, q, w}^{-1}(\Omega_T)} = \langle u, \varphi \rangle_{\bH_{p, q, w}^{-1}(\Omega_T)}
\]
for $\varphi \in C_0^\infty((0,T) \times \Omega)$.
This proves $I^{\alpha} \partial_t I^{1-\alpha} u = u$ in $\bH_{p, q, w}^{-1}(\Omega_T)$.
Then, the inequalities in the lemma are proved as in the proof of Lemma \ref{lem0517_2}.
In particular, we use
\[
\|I^\alpha v\|_{\bH_{p, q, w}^{-1}(\Omega_T)} \leq N T^\alpha \|v\|_{\bH_{p, q, w}^{-1}(\Omega_T)},
\]
which holds since we have
$I^\alpha v = I^\alpha f + D_i I^\alpha g_i$ for $v = f + D_i g_i$, and 
\[
\|I^\alpha h\|_{L_{p, q, w}(\Omega_T)} \leq N T^\alpha \|h\|_{L_{p, q, w}(\Omega_T)}
\]
for $h \in L_{p, q, w}(\Omega_T)$ as in the proof of Lemma \ref{lem0517_2}.
\end{proof}

\begin{remark}
Lemma \ref{lem0517_3} shows that, for $u \in \cH_{p,q, w, 0}^{\alpha,-1}(\Omega_T)$,
\[
\|\partial_t^\alpha u\|_{\bH_{p, q, w}^{-1}(\Omega)} \simeq \|u\|_{\cH_{p, q, w}^{\alpha,-1}(\Omega_T)}.
\]
\end{remark}

\begin{lemma}
Let $\alpha \in (0,1)$ and $u \in \cH_{p, q, w, 0}^{\alpha,-1}(\Omega_T)$.
Then
\[
\| t^{-\alpha} u\|_{\bH_{p, q, w}^{-1}(\Omega_T)} \leq N \| \partial_t^\alpha u\|_{\bH_{p, q, w}^{-1}(\Omega_T)},
\]
where $N = N(\alpha, q, \mu) > 0$.
Furthermore, if $\alpha \in [(1+\mu)/q, 1)$ and $u \in \cH_{p, q, w}^{\alpha,-1}(\Omega_T)$ with $u - u_0 \in \cH_{p, q, w, 0}^{\alpha,-1}(\Omega_T)$, we have
\[
\| t^{-\alpha} (u - u_0) \|_{\bH_{p, q, w}^{-1}(\Omega_T)} \le N(\alpha, q, \mu) \| \partial_t^{\alpha}u \|_{\bH_{p, q, w}^{-1}(\Omega_T)}.
\]
\end{lemma}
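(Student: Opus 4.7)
The plan is to follow the template of Lemma \ref{lem0225_1}, substituting Lemma \ref{lem0517_3} for Lemma \ref{lem0517_2} and using the definition of the $\bH_{p,q,w}^{-1}$ norm to split the task into two $L_{p,q,w}$ estimates, each of which is handled by the Hardy-type inequality in Lemma \ref{lem0224}. As in the non-divergence case, the second assertion reduces to the first: for $u \in \cH_{p,q,w}^{\alpha,-1}(\Omega_T)$ with $u-u_0 \in \cH_{p,q,w,0}^{\alpha,-1}(\Omega_T)$, we have by Definition \ref{def_0223_6} that $\partial_t^\alpha u = \partial_t I^{1-\alpha}(u-u_0)$, so applying the first inequality to $u-u_0$ yields the second.

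To prove the first inequality, fix $u \in \cH_{p,q,w,0}^{\alpha,-1}(\Omega_T)$ and set $v = \partial_t^\alpha u \in \bH_{p,q,w}^{-1}(\Omega_T)$. For $\varepsilon > 0$, choose a representation $v = f + D_i g_i$ with $f, g_i \in L_{p,q,w}(\Omega_T)$ and
\[
\|f\|_{L_{p,q,w}(\Omega_T)} + \|g_i\|_{L_{p,q,w}(\Omega_T)} \le \|v\|_{\bH_{p,q,w}^{-1}(\Omega_T)} + \varepsilon.
\]
Lemma \ref{lem0517_3} gives $u = I^\alpha v$ in $\bH_{p,q,w}^{-1}(\Omega_T)$, and since $I^\alpha$ acts only in the time variable, it commutes with the spatial distributional derivatives, yielding the representation $u = I^\alpha f + D_i(I^\alpha g_i)$ as computed in the proof of Lemma \ref{lem0517_3}. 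Because $t^{-\alpha}$ depends only on $t$, multiplication by $t^{-\alpha}$ commutes with $D_i$ at the level of distributions, so
\[
t^{-\alpha} u = t^{-\alpha} I^\alpha f + D_i\bigl( t^{-\alpha} I^\alpha g_i \bigr).
\]
By the definition of the $\bH_{p,q,w}^{-1}$ norm,
\[
\|t^{-\alpha} u\|_{\bH_{p,q,w}^{-1}(\Omega_T)} \le \|t^{-\alpha} I^\alpha f\|_{L_{p,q,w}(\Omega_T)} + \sum_i \|t^{-\alpha} I^\alpha g_i\|_{L_{p,q,w}(\Omega_T)}.
\]
Exactly as in the proof of Lemma \ref{lem0225_1} (Minkowski in the spatial variable followed by the Hardy estimate \eqref{eq0224_03} of Lemma \ref{lem0224} in the time variable), each term on the right is bounded by $N(\alpha,q,\mu)\,\|\cdot\|_{L_{p,q,w}(\Omega_T)}$ applied to $f$ or $g_i$. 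Combining the estimates and letting $\varepsilon \to 0$ finishes the proof.

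The only point requiring a moment of care is ensuring that the formal identity $t^{-\alpha}(F + D_i G_i) = t^{-\alpha}F + D_i(t^{-\alpha}G_i)$ is valid as an $\bH_{p,q,w}^{-1}$ element; this follows at once by testing against $\varphi \in \mathring{W}_{p',q',w'}^{0,1}(\Omega_T)$ and using that $D_i(t^{-\alpha}\varphi) = t^{-\alpha} D_i\varphi$. Once this is observed, there is no genuine obstacle: the estimate reduces mechanically to the scalar Hardy inequality on each of the finitely many $L_{p,q,w}$ components of a near-optimal representation of $\partial_t^\alpha u$.
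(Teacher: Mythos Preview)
Your proof is correct and follows essentially the same route as the paper: both arguments use Lemma \ref{lem0517_3} to write $u = I^\alpha \partial_t^\alpha u$, pass to a representation $t^{-\alpha}u = t^{-\alpha}I^\alpha f + D_i(t^{-\alpha}I^\alpha g_i)$ from a representation $\partial_t^\alpha u = f + D_i g_i$, and finish with the Hardy inequality \eqref{eq0224_03}. The only cosmetic difference is that the paper verifies the representation identity by an explicit duality computation with $J^\alpha$, whereas you invoke the computation already recorded in the proof of Lemma \ref{lem0517_3} and then take an $\varepsilon$-near-optimal representation; both are equivalent.
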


\begin{proof}
By definition, the second assertion follows from the first one.
Let $g_i, f \in L_{p, q, w}(\Omega_T)$ satisfy \eqref{eq0601_01}.
Then, by Lemma \ref{lem0517_3},  for $\varphi \in C_0^\infty((0,T) \times \Omega)$,
\[
\langle t^{-\alpha} u, \varphi \rangle_{\bH_{p, q, w}^{-1}(\Omega_T)} = \langle u, t^{-\alpha} \varphi \rangle_{\bH_{p, q, w}^{-1}(\Omega_T)} = \langle \partial_t^\alpha u, J^\alpha (t^{-\alpha} \varphi) \rangle_{\bH_{p, q, w}^{-1}(\Omega_T)} 
\]
\[
= \int_{\Omega_T} \left( f J^\alpha (t^{-\alpha} \varphi) - g_i J^\alpha (t^{-\alpha} D_i \varphi) \right) \, \mathrm{d}x \, \mathrm{d}t
= \int_{\Omega_T} t^{-\alpha}\left( I^\alpha f \, \varphi - I^\alpha g_i D_i \, \varphi \right) \, \mathrm{d}x \, \mathrm{d}t,
\]
where as shown in the proof of Lemma \ref{lem0225_1}, by \eqref{eq0224_03} of Lemma \ref{lem0224} we have
\[
\|t^{-\alpha} I^\alpha f\|_{L_{p, q, w}(\Omega_T)} \leq N \|f\|_{L_{p, q, w}(\Omega_T)}, \quad \|t^{-\alpha} I^\alpha g_i\|_{L_{p, q, w}(\Omega_T)} \leq N \|g_i\|_{L_{p, q, w}(\Omega_T)}.
\]
Hence, we see that the desired inequality follows.
\end{proof}

\begin{proposition}
\label{prop0101_01}
If $u \in \cH_{p, q, w, 0}^{\alpha,-1}(\Omega_T)$, there exists a sequence $\{u_k\}$ such that $u_k \in C^\infty([0,T] \times \Omega)$, $u_k(t,x)$ vanishes for large $|x|$ (if $\Omega$ is unbounded), $u_k(0,x) = 0$, and $u_k \to u$ in $\cH_{p, q, w, 0}^{\alpha,-1}(\Omega_T)$ as $k \to \infty$.
\end{proposition}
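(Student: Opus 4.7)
The plan is to mirror the proof of Proposition~\ref{prop0514_1}, but to perform the approximation at the level of the ``data'' $(f,g_i)$ of $\partial_t I^{1-\alpha}u$ rather than at the level of $I^{1-\alpha}u$ itself, which is now only an element of $\bH_{p,q,w}^{-1}(\Omega_T)$ and not a Sobolev function in $t$. Since $u\in\cH_{p,q,w,0}^{\alpha,-1}(\Omega_T)$, Definition~\ref{def0513_6}(i) gives $f,g_i\in L_{p,q,w}(\Omega_T)$ with $\partial_t I^{1-\alpha}u=f+D_ig_i$, and Lemma~\ref{lem0517_3} yields $u=I^\alpha\partial_t I^{1-\alpha}u$ in $\bH_{p,q,w}^{-1}(\Omega_T)$. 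Since $I^\alpha$ commutes with the spatial derivative $D_i$, this can be rewritten as
\[
u = I^\alpha f + D_i I^\alpha g_i\qquad\text{in }\bH_{p,q,w}^{-1}(\Omega_T).
\]

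Next, I approximate $f$ and each $g_i$ in $L_{p,q,w}(\Omega_T)$ by $f_k,g_{i,k}\in C_0^\infty((0,T)\times\Omega)$; this is standard for weights $w(t,x)=t^\mu w_2(x)$ with $w_2\in A_p(\bR^d)$ via truncation in $t$ and $x$ followed by mollification. Define
\[
u_k := I^\alpha(f_k+D_ig_{i,k}) = I^\alpha f_k + D_i I^\alpha g_{i,k}.
\]
Because $f_k+D_ig_{i,k}$ is compactly supported in $(0,T)\times\Omega$, the function $u_k$ vanishes identically for $t$ in a neighbourhood of $0$ and for $|x|$ large; in particular $u_k\in C^\infty([0,T]\times\Omega)$ and $u_k(0,x)=0$. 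Moreover $I^{1-\alpha}u_k = I^1(f_k+D_ig_{i,k})$, which vanishes at $t=0$, so integration by parts shows that $u_k\in\cH_{p,q,w,0}^{\alpha,-1}(\Omega_T)$ with $\partial_t^\alpha u_k = f_k+D_ig_{i,k}$.

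For convergence, applying the estimate $\|I^\alpha h\|_{L_{p,q,w}(\Omega_T)}\le NT^\alpha\|h\|_{L_{p,q,w}(\Omega_T)}$ (which follows from Lemma~\ref{lem0224} exactly as used inside the proof of Lemma~\ref{lem0517_3}) componentwise gives
\[
\|u-u_k\|_{\bH_{p,q,w}^{-1}(\Omega_T)} \le NT^\alpha\bigl(\|f-f_k\|_{L_{p,q,w}(\Omega_T)} + \|g_i-g_{i,k}\|_{L_{p,q,w}(\Omega_T)}\bigr)\longrightarrow 0,
\]
while the identity $\partial_t^\alpha u - \partial_t^\alpha u_k = (f-f_k)+D_i(g_i-g_{i,k})$ gives $\|\partial_t^\alpha u-\partial_t^\alpha u_k\|_{\bH_{p,q,w}^{-1}(\Omega_T)}\to 0$. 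Hence $u_k\to u$ in $\cH_{p,q,w,0}^{\alpha,-1}(\Omega_T)$.

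The main subtlety, compared with Proposition~\ref{prop0514_1}, is that $I^{1-\alpha}u$ is no longer a function in a Sobolev space but only a distributional element of $\bH_{p,q,w}^{-1}(\Omega_T)$, so the direct density argument used there is unavailable. Transferring the approximation to $(f,g_i)\in L_{p,q,w}(\Omega_T)$ sidesteps this issue, and the fact that the approximants can be chosen compactly supported away from $\{t=0\}$ makes $u_k=I^\alpha(f_k+D_ig_{i,k})$ automatically smooth up to $t=0$, avoiding any delicate analysis of the kernel $(t-s)^{\alpha-1}$ at $s=t=0$.
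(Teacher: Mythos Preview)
Your proof is correct and takes a genuinely different route from the paper's. The paper proceeds by direct mollification of $u$: after a cutoff in $x$ reducing to compactly supported data, it sets
\[
u^{(\varepsilon)}(t,x)=\langle u,\varphi_\varepsilon(t-\cdot,x-\cdot)\rangle_{\bH_{p,q,w}^{-1}(\Omega_T)}
\]
with a one-sided temporal mollifier $\eta$ supported in $(0,1)$, and the heart of the argument is the commutation identity $\partial_t I^{1-\alpha}u^{(\varepsilon)}=(\partial_t I^{1-\alpha}u)^{(\varepsilon)}$, whose verification requires carefully tracking how $I^{1-\alpha}$ interacts with the one-sided convolution. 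You instead exploit the representation $u=I^\alpha f+D_iI^\alpha g_i$ from Lemma~\ref{lem0517_3} and push the approximation down to $(f,g_i)\in L_{p,q,w}(\Omega_T)$, where density of $C_0^\infty((0,T)\times\Omega)$ is standard; the resulting $u_k=I^\alpha f_k+D_iI^\alpha g_{i,k}$ are automatically smooth (via $I^\alpha f_k=I^{n+\alpha}\partial_t^n f_k$) and even vanish in a neighbourhood of $\{t=0\}$, which is slightly stronger than the paper's conclusion. Your approach is shorter and avoids the commutation computation entirely; the paper's mollification, on the other hand, is a reusable device that reappears (e.g., in Lemma~\ref{lem0102_04}) and adapts naturally when one wants to approximate several spatial derivatives of $u$ simultaneously.
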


\begin{proof}
We prove the case $\Omega = \bR^d$.
For $u \in \cH_{p, q, w, 0}^{\alpha,-1}(\Omega_T)$, let
\[
\langle u, \varphi \rangle_{\bH_{p, q, w}^{-1}(\Omega_T)} = \int_{\Omega_T}(F \varphi - G_i D_i \varphi ) \, \mathrm{d}x \, \mathrm{d}t,
\]
\[
\langle \partial_t I^{1-\alpha} u, \varphi \rangle_{\bH_{p, q, w}^{-1}(\Omega_T)} = \int_{\Omega_T}(f \varphi - g_i D_i \varphi ) \, \mathrm{d}x \, \mathrm{d}t
\]
for $\varphi \in W_{p', q', w'}^{0,1}(\Omega_T)$, where $F, G_i, f, g_i \in L_{p, q, w}(\Omega_T)$.

For $\xi = \xi(x) \in C_0^\infty(\bR^d)$, using the definition of $I^{1-\alpha}$ in \eqref{eq0512_05}, we see that $I^{1-\alpha}(\xi u) = \xi I^{1-\alpha} u$.
Then, $\langle \partial_t I^{1-\alpha} (\xi u), \varphi \rangle_{\bH_{p, q, w}^{-1}(\Omega_T)}$ is equal to
\[
\langle \xi \partial_t I^{1-\alpha} u, \varphi \rangle_{\bH_{p, q, w}^{-1}(\Omega_T)} = \int_{\Omega_T} \left((\xi f - g_i D_i \xi) \varphi - (\xi g_i) D_i \varphi \right) \, \mathrm{d}x \, \mathrm{d}t
\]
for all $\varphi \in C_0^\infty((0,T) \times \Omega)$, which further holds for all $\varphi \in \mathring{W}_{p', q', w'}^{0,1}(\Omega_T)$.
Hence, $\partial_t I^{1-\alpha} (\xi u) = \xi \partial_t I^{1-\alpha} u$
in $\bH_{p, q, w}^{-1}(\Omega_T)$.
This indicates that we can assume that $u$, $F$, $G_i$, $f$, and $g_i$ have compact support.

Let $\eta(t)$ and $\phi(x)$ be non-negative infinitely differentiable functions  with compact support on $\bR$ and $\bR^d$, respectively, such that $\eta(t) = 0$ for $t \in (-\infty,0) \cup (1,\infty)$ and $\int_\bR \eta(t) \, \mathrm{d}t = \int_{\bR^d} \phi(x) \, \mathrm{d}x = 1$.
We then set
\[
\varphi(t,x) = \eta(t) \phi(x), \quad \varphi_\varepsilon(t,x) = \frac{1}{\varepsilon^{d+2}}\varphi(t/\varepsilon^2, x/\varepsilon).
\]
For each $(t,x) \in \Omega_T$, we see that $\varphi_\varepsilon(t-s,x-y) \in C_0^\infty([0,T) \times \Omega)$ as a function of $(s,y)$.
Thus, for $w \in \bH_{p, q, w}^{-1}(\Omega_T)$,
\[
w^{(\varepsilon)}(t,x) = \langle w, \varphi_\varepsilon (t-\cdot,x-\cdot) \rangle_{\bH_{p, q, w}^{-1}(\Omega_T)}
\]
is well defined.
Then
\begin{multline}
							\label{eq0516_02}
u^{(\varepsilon)}(t,x) = \int_{\Omega_T} F(s,y) \varphi_\varepsilon(t-s,x-y) \, dy \, \mathrm{d}s + \int_{\Omega_T} G_i(s,y) (D_i\varphi_\varepsilon)(t-s,x-y) \, dy \, \mathrm{d}s
\\
= F^{(\varepsilon)}(t,x) + D_i  G_i^{(\varepsilon)}(t,x),
\end{multline}
and similarily,
\[
(\partial_t I^{1-\alpha}u)^{(\varepsilon)}(t,x) = f^{(\varepsilon)}(t,x) + D_i  g_i^{(\varepsilon)}(t,x),
\]
where, for a function $h$ on $\Omega_T = (0,T) \times \bR^d$, we denote
\[
h^{(\varepsilon)}(t,x) = \int_{\Omega_T} h(s,y) \varphi_\varepsilon(t-s,x-y) \, dy \, \mathrm{d}s
\]
\[
= \int_{\bR \times \bR^d} 1_{0<s<T} h(s,y) \varphi_\varepsilon(t-s,x-y) \, dy \, \mathrm{d}s.
\]

We now prove
\begin{equation}
							\label{eq0516_01}
\partial_t I^{1-\alpha} u^{(\varepsilon)}(t,x) = (\partial_t I^{1-\alpha} u)^{(\varepsilon)}(t,x)
\end{equation}
on $\Omega_T$.
If this is the case, $\{u^{(\varepsilon)}\}$ is a desired sequence.
Indeed, $u^{(\varepsilon)} \in C^\infty([0,T] \times \Omega)$ and
\[
u^{(\varepsilon)}(0,x) = F^{(\varepsilon)}(0,x) + D_i G_i^{(\varepsilon)}(0,x) = 0,
\]
because $\eta(t) = 0$ for $t \leq 0$.
Moreover, $u^{(\varepsilon)}(t,x)$ vanishes for large $|x|$ because $F$ and $G_i$ have compact support.
Since $(F^{(\varepsilon)}, G_i^{(\varepsilon)}, f^{(\varepsilon)}, g_i^{(\varepsilon)}) \to (F, G_i, f, g_i)$ in $L_{p, q, w}(\Omega_T)$,
we see that
\[
u^{(\varepsilon)} \to u, \quad (\partial_t I^{1-\alpha} u)^{(\varepsilon)} \to \partial_t I^{1-\alpha} u
\]
in $\bH_{p, q, w}^{-1}(\Omega_T)$, which together with \eqref{eq0516_01} means that $u^{(\varepsilon)} \to u$ in $\cH_{p, q, w, 0}^{\alpha,-1}(\Omega_T)$.

To prove \eqref{eq0516_01}, we first see that from \eqref{eq0516_02}
\begin{multline}
							\label{eq0516_03}
I^{1-\alpha} u^{(\varepsilon)}(t,x) = I^{1-\alpha} F^{(\varepsilon)}(t,x) + I^{1-\alpha} D_i G_i^{(\varepsilon)}(t,x)
\\
= (I^{1-\alpha} F)^{(\varepsilon)}(t,x) + D_i  (I^{1-\alpha} G_i)^{(\varepsilon)}(t,x)
\end{multline}
on $\Omega_T$,
where the second equality is due to the choice of $\eta(t)$ (see the proof of \cite[Proposition 3.2]{MR3899965}) so that, for a function $h$ on $\Omega_T$, we have
\begin{equation*}
							%\label{eq0515_01}
(I^{1-\alpha} h)^{(\varepsilon)}(t,x) = I^{1-\alpha} h^{(\varepsilon)}(t,x).
\end{equation*}
By the equalities in \eqref{eq0516_03} we have
\[
\partial_t I^{1-\alpha} u^{(\varepsilon)} (t,x) 
= \partial_t (I^{1-\alpha} F)^{(\varepsilon)}(t,x) + \partial_t D_i (I^{1-\alpha} G_i)^{(\varepsilon)}(t,x),
\]
which finally proves \eqref{eq0516_01} because
\[
(\partial_t I^{1-\alpha} u)^{(\varepsilon)}(t,x) = \langle \partial_t I^{1-\alpha} u, \varphi_\varepsilon (t-\cdot,x-\cdot) \rangle_{\bH_{p, q, w}^{-1}(\Omega_T)}
\]
\[
= \langle I^{1-\alpha} u,  (\partial_t \varphi_\varepsilon) (t-\cdot,x-\cdot) \rangle_{\bH_{p, q, w}^{-1}(\Omega_T)}
\]
\[
= \int_{\Omega_T} (I^{1-\alpha}F)(s,y) (\partial_t \varphi_\varepsilon)(t-s,x-y) \, \mathrm{d}y \, \mathrm{d}s
\]
\[
+ \int_{\Omega_T} (I^{1-\alpha} G_i)(s,y) (\partial_t D_i \varphi_\varepsilon)(t-s,x-y) \, \mathrm{d}y \, \mathrm{d}s
\]
\[
=\partial_t (I^{1-\alpha} F)^{(\varepsilon)}(t,x) + \partial_t D_i (I^{1-\alpha} G_i)^{(\varepsilon)}(t,x) = \partial_t I^{1-\alpha} u^{(\varepsilon)}(t,x).
\]
For general $\Omega$, the claim is proved using a partition of unity with respect to the spatial variables. In particular, for $\zeta \in C_0^{\infty}(\Omega)$, we have $\zeta u \in \cH_{p, q, w}^{\alpha, -1}( \bR^d_T)$.
The proposition is proved.
\end{proof}

\begin{lemma}[Trace inequality]
	\label{lem1201_1}
Let $\alpha \in [(1+\mu)/q,1)$ and $u \in \cH_{p, q, w}^{\alpha,-1}(\Omega_T)$ with $u-u_0 \in \cH_{p, q, w, 0}^{\alpha,-1}(\Omega_T)$.
Then,
\begin{equation}
\label{eq0114_01}
\|u_0\|_{H_{p, w_2}^{-1}(\Omega)} \leq N T^{- (1+\mu)/q}\|u\|_{\bH_{p, q, w}^{-1}(\Omega_T)} + N T^{\alpha - (1+\mu)/q} \|\partial_t^\alpha u\|_{\bH_{p, q, w}^{-1}(\Omega_T)},
\end{equation}
where $N = N(\alpha,q, \mu) > 0$. Furthermore, if $T \geq 1$, we have
\[
\|u_0\|_{H_{p, w_2}^{-1}(\Omega)} \leq N(\alpha, q, \mu) \| u \|_{\cH_{p, q, w}^{\alpha, -1}(\Omega_T)}.
\]
\end{lemma}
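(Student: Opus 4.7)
The proof should mirror the non-divergence case (Lemma \ref{lem0525_1}), but requires extra care to handle the negative Sobolev norm $\bH_{p,q,w}^{-1}(\Omega_T)$, which is defined as an infimum over representations of the form $F + D_i G_i$. My plan is first to establish, as an identity in $\bH_{p,q,w}^{-1}(\Omega_T)$, the divergence analog of the scalar identity used in Lemma \ref{lem0525_1}, namely
\[
I^{1-\alpha} u_0 = I^{1-\alpha} u - I^{1}(\partial_t^\alpha u),
\]
and then take $\bH_{p,q,w}^{-1}$-norms on both sides and unpack the left-hand side, which equals $t^{1-\alpha} u_0 /((1-\alpha)\Gamma(1-\alpha))$ by the computation recorded in the proof of Lemma \ref{lem04131517_1}.

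To justify the displayed identity, I would invoke Proposition \ref{prop0101_01} to approximate $u - u_0 \in \cH_{p,q,w,0}^{\alpha,-1}(\Omega_T)$ by smooth functions $v_k$ with $v_k(0,\cdot)=0$; for such $v_k$ the identity $I^{1-\alpha} v_k = I^{1} \partial_t^\alpha v_k$ holds pointwise by the fundamental theorem of calculus combined with $I^{1-\alpha} v_k|_{t=0}=0$. Passing to the limit in $\bH_{p,q,w}^{-1}(\Omega_T)$, using that $I^{1-\alpha}$ and $I^1$ act boundedly on $\bH_{p,q,w}^{-1}$ (via Lemma \ref{lem0224} applied to each component of a representation), yields $I^{1-\alpha}(u-u_0) = I^1 \partial_t^\alpha u$; subtracting from $I^{1-\alpha} u$ gives the identity. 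Taking $\bH_{p,q,w}^{-1}$-norms, the right-hand side is controlled exactly as in the proof of Lemma \ref{lem0517_3}:
\[
\|I^{1-\alpha} u\|_{\bH_{p,q,w}^{-1}} \le N T^{1-\alpha} \|u\|_{\bH_{p,q,w}^{-1}}, \qquad \|I^{1} \partial_t^\alpha u\|_{\bH_{p,q,w}^{-1}} \le N T \|\partial_t^\alpha u\|_{\bH_{p,q,w}^{-1}}.
\]

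The technical heart of the argument, which I expect to be the main obstacle, is extracting $\|u_0\|_{H_{p,w_2}^{-1}(\Omega)}$ from $\|t^{1-\alpha} u_0\|_{\bH_{p,q,w}^{-1}(\Omega_T)}$. In the non-divergence setting this is immediate since the norm factorizes, but in $\bH_{p,q,w}^{-1}$ one must pass from a general space-time representation back to a purely spatial one. I would do this by time-averaging: given any representation $t^{1-\alpha} u_0 = F + D_i G_i$ with $F, G_i \in L_{p,q,w}(\Omega_T)$, integrating both sides over $t \in (0,T)$ gives
\[
u_0(x) = c_T \int_0^T F(t,x) \,\mathrm{d}t + D_i \Big( c_T \int_0^T G_i(t,x) \,\mathrm{d}t \Big), \qquad c_T := \Big( \int_0^T t^{1-\alpha}\,\mathrm{d}t \Big)^{-1} \simeq T^{\alpha-2}.
\]
H\"older's inequality in $t$ with the conjugate weight $t^{-\mu/(q-1)}$ (which is integrable on $(0,T)$ since $\mu<q-1$) gives $\|\int_0^T F(t,\cdot)\,\mathrm{d}t\|_{L_{p,w_2}} \le N T^{1-(1+\mu)/q} \|F\|_{L_{p,q,w}}$ and similarly for each $G_i$; taking the infimum over representations then yields the lower bound
\[
\|t^{1-\alpha} u_0\|_{\bH_{p,q,w}^{-1}(\Omega_T)} \ge c\, T^{1-\alpha+(1+\mu)/q} \|u_0\|_{H_{p,w_2}^{-1}(\Omega)}.
\]
Combining this lower bound with the two upper bounds above and dividing through by $T^{1-\alpha+(1+\mu)/q}$ produces exactly \eqref{eq0114_01}. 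For the final assertion when $T \ge 1$, I would restrict $u - u_0$ to $(0,1) \times \Omega$ (which keeps it in $\cH_{p,q,w,0}^{\alpha,-1}((0,1)\times\Omega)$) and apply \eqref{eq0114_01} with $T$ replaced by $1$, absorbing the $T$-powers into a constant depending only on $\alpha,q,\mu$.
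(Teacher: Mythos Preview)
Your proposal is correct and follows essentially the same strategy as the paper: both establish the identity $I^{1-\alpha} u_0 = I^{1-\alpha} u - I^1(\partial_t^\alpha u)$ in $\bH_{p,q,w}^{-1}(\Omega_T)$ and then test against a time-independent $\phi(x) \in C_0^\infty(\Omega)$ (your ``time-averaging'' is exactly this pairing). The differences are purely stylistic. First, the paper derives the identity directly from Definition~\ref{def0513_6} by taking $\Phi(t,x)=\int_t^T\varphi(s,x)\,\mathrm{d}s$ and using $\Phi_t=-\varphi$, so no appeal to the approximation in Proposition~\ref{prop0101_01} is needed; your route via smooth approximants is valid but slightly heavier. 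Second, the paper never passes through the abstract $\bH^{-1}$-norm: it fixes specific representations $u=D_iG_i+F$ and $\partial_t^\alpha u = D_ig_i+f$ from the outset, obtains the explicit representation $I^{1-\alpha}u_0 = (I^{1-\alpha}F-\tilde f) + D_i(I^{1-\alpha}G_i-\tilde g_i)$ with $\tilde f=I^1 f$, $\tilde g_i=I^1 g_i$, and reads off the bound directly---whereas you take norms, then recover $\|u_0\|_{H^{-1}_{p,w_2}}$ by your averaging lower bound. Your modular version has the advantage that the lower-bound step
\[
\|t^{1-\alpha}u_0\|_{\bH^{-1}_{p,q,w}(\Omega_T)} \ge c\,T^{1-\alpha+(1+\mu)/q}\|u_0\|_{H^{-1}_{p,w_2}(\Omega)}
\]
is a reusable lemma; the paper's version is shorter and avoids any worry about whether time-averaging of an arbitrary $\bH^{-1}$-representation is legitimate (which it is, since $\phi(x)\in \mathring W^{0,1}_{p',q',w'}(\Omega_T)$ thanks to $\mu<q-1$). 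For $T\ge 1$, the paper instead tests against $\phi(x)\eta(t)$ with $\eta\in C_0^\infty(0,1)$, which is equivalent to your restriction to $(0,1)\times\Omega$.
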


\begin{proof}
Since $u, \partial_t I^{1-\alpha}(u-u_0) \in \bH_{p, q, w}^{-1}(\Omega_T)$, there exist $F, G_i, f, g_i \in L_{p, q, w}(\Omega_T)$ such that, for $\varphi \in \mathring{W}_{p', q', w'}^{0,1}(\Omega_T)$,
\[
\langle u, \varphi \rangle_{\bH_{p, q, w}^{-1}(\Omega_T)} = \int_{\Omega_T} \left( F \varphi - G_i D_i \varphi \right) \, \mathrm{d}x \, \mathrm{d}t,
\]
\[
\langle I^{1-\alpha}u, \varphi \rangle_{\bH_{p, q, w}^{-1}(\Omega_T)} = \int_{\Omega_T} \left( (I^{1-\alpha} F) \varphi - (I^{1-\alpha}G_i) D_i \varphi \right) \, \mathrm{d}x \, \mathrm{d}t,
\]
and
\[
\langle \partial_t I^{1-\alpha}(u-u_0), \varphi \rangle_{\bH_{p, q, w}^{-1}(\Omega_T)} = \int_{\Omega_T} \left(f \varphi - g_i D_i \varphi \right) \, \mathrm{d}x \, \mathrm{d}t.
\]
For $\varphi \in C_0^\infty\left((0,T) \times \Omega\right)$, set
\[
\Phi(t,x) = \int_t^T \varphi(s,x) \, \mathrm{d}s \in C_0^\infty\left([0,T) \times \Omega\right).
\]
Then, $\Phi_t(t,x) = - \varphi(t,x)$
and by definition, we have
\[
\langle I^{1-\alpha}(u-u_0), \Phi_t \rangle_{\bH_{p, q, w}^{-1}(\Omega_T)} = - 
\langle \partial_t I^{1-\alpha}(u-u_0), \Phi \rangle_{\bH_{p, q, w}^{-1}(\Omega_T)}.
\]
This shows that
\[
\langle I^{1-\alpha}(u-u_0), \varphi \rangle_{\bH_{p, q, w}^{-1}(\Omega_T)} = \int_{\Omega_T} \left(f \Phi - g_i D_i \Phi \right) \, \mathrm{d}x \, \mathrm{d}t
\]
\[
= \int_{\Omega_T} \left(\tilde{f} \varphi - \tilde{g}_i D_i \varphi\right) \, \mathrm{d}x \, \mathrm{d}t,
\]
where
\[
\tilde{f}(t,x) = \int_0^t f(s,x) \, \mathrm{d}s, \quad \tilde{g}_i(t,x) = \int_0^t g_i(s,x) \, \mathrm{d}s.
\]
Since $I^{1-\alpha}u_0 = I^{1-\alpha}u - I^{1-\alpha}(u-u_0)$ in $\bH_{p, q, w}^{-1}(\Omega_T)$,
it follows that
\begin{equation}
							\label{eq0513_08}
\langle I^{1-\alpha}u_0, \varphi \rangle_{\bH_{p, q, w}^{-1}(\Omega_T)} = \int_{\Omega_T} \left((I^{1-\alpha} F - \tilde{f}) \varphi - (I^{1-\alpha} G_i - \tilde{g}_i)D_i \varphi \right) \, \mathrm{d}x \, \mathrm{d}t
\end{equation}
for any $\varphi \in \mathring{W}_{p', q', w'}^{0,1}(\Omega_T)$.
If $\varphi = \phi(x) \in C_0^\infty(\Omega)$, by the definition of $I^{1-\alpha}$ and \eqref{eq0512_03}
\begin{multline}
							\label{eq0513_09}
\langle I^{1-\alpha}u_0, \varphi \rangle_{\bH_{p, q, w}^{-1}(\Omega_T)} = \frac{1}{(1-\alpha)\Gamma(1-\alpha)} \langle u_0, \phi \rangle_{H_{p, w_2}^{-1}(\Omega)} \int_0^T t^{1-\alpha} \, \mathrm{d}t
\\
= \frac{T^{2-\alpha}}{(2-\alpha)(1-\alpha)\Gamma(1-\alpha)} \langle u_0, \phi \rangle_{H_{p, w_2}^{-1}(\Omega)}.
\end{multline}
Note that
\[
\|I^{1-\alpha} F - \tilde{f}\|_{L_{p, q, w}(\Omega_T)} \leq NT^{1-\alpha} \|F\|_{L_{p, q, w}(\Omega_T)} + N T \|f\|_{L_{p, q, w}(\Omega_T)}, 
\]
\[
\|I^{1-\alpha} G_i - \tilde{g}_i\|_{L_{p, q, w}(\Omega_T)} \leq N T^{1-\alpha} \|G_i\|_{L_{p, q, w}(\Omega_T)} + N T\|g_i\|_{L_{p, q, w}(\Omega_T)},
\]
where $N = N(\alpha, q, \mu)$.
From these inequalities with \eqref{eq0513_08} and \eqref{eq0513_09}, we get \eqref{eq0114_01}. If $T \geq 1$, we choose $\varphi(t,x) = \phi(x) \eta(t)$, where $\phi(x) \in C_0^\infty(\Omega)$ and $\eta(t) \in C_0^\infty(0,1)$ with $\eta(t) \geq 0$, $\eta(t) = 1$ on $(1/4,3/4)$.
\end{proof}

Let $u \in \bH_{p, q, w}^{-1}(\Omega_T)$ satisfy
\[
\langle u, \varphi \rangle_{\bH_{p, q, w}^{-1}(\Omega_T)} = \int_{\Omega_T} \left( F \varphi - G_i D_i \varphi \right) \, \mathrm{d}x \, \mathrm{d}t,
\]
where $F, G_i \in L_{p, q, w}(\Omega_T)$.
Set
\[
\langle u(t,\cdot), \phi \rangle_{H_{p, w_2}^{-1}(\Omega)} = \int_\Omega \left(F(t,x) \phi(x) - G_i(t,x) D_i \phi(x)\right) \, \mathrm{d}x
\]
for $\phi \in C_0^\infty(\Omega)$, which is a well-defined for a.e. $t \in [0,T]$.
That is, $u(t,\cdot) \in H_{p, w_2}^{-1}(\Omega)$ for a.e. $t \in [0,T]$.

\begin{lemma}
	\label{lem230606_1}
Let $\alpha \in ((1+\mu)/q,1)$ if $\mu \in [0, q-1)$ and let $\alpha \in (1/q, 1)$ if $\mu \in (-1, 0)$. For $u \in \cH_{p, q, w}^{\alpha,-1}(\Omega_T)$ with $u_0 \in H_{p, w_2}^{-1}(\Omega)$ and $u - u_0 \in \cH_{p, q, w, 0}^{\alpha,-1}(\Omega_T)$, there is a version of $u$ (still denoted by $u$) such that
\begin{equation}
	\label{eq230606_03}
\| u(t,\cdot) - u_0(\cdot)\|_{H_{p, w_2}^{-1}(\Omega)} \leq N t^{\alpha - (1+\mu)/q} \|\partial_t^\alpha u\|_{\bH_{p, q, w}^{-1}(\Omega_T)}
\end{equation}
for $t \in (0,T)$, where $N = N(\alpha, q, \mu) > 0$.
Thus,
\[
\| u(t,\cdot) - u_0(\cdot)\|_{H_{p, w_2}^{-1}(\Omega)} \to 0
\]
as $t \searrow 0$.
Furthermore, we have
\begin{equation}
	\label{eq230606_2}
\|u(t, \cdot)\|_{H_{p, w_2}^{-1}(\Omega)} \le N(\alpha, q, \mu, T) \| u \|_{\cH_{p, q, w}^{\alpha, -1}(\Omega_T)}.
\end{equation}
\end{lemma}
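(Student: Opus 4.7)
The approach is to lift the argument for Lemma \ref{lem1115_1} from the $L_{p,w_2}$ setting to the $H_{p,w_2}^{-1}$ setting by working with representations $\partial_t^\alpha u = D_i g_i + f$ and passing the fractional integral through the distributional pairing. Applying Lemma \ref{lem0517_3} to $u - u_0 \in \cH_{p,q,w,0}^{\alpha,-1}(\Omega_T)$ yields the identity $u - u_0 = I^\alpha \partial_t^\alpha u$ in $\bH_{p,q,w}^{-1}(\Omega_T)$. Fixing any decomposition $\partial_t^\alpha u = D_i g_i + f$ with $g_i, f \in L_{p,q,w}(\Omega_T)$, I would take as the pointwise-in-$t$ version of $u$ the one associated (via the construction in the paragraph preceding the lemma) to the representation
\[
u = u_0 + I^\alpha f + D_i (I^\alpha g_i) \quad \text{in } \bH_{p,q,w}^{-1}(\Omega_T),
\]
so that for a.e. $t \in (0,T)$, the definition of the $H_{p,w_2}^{-1}(\Omega)$ norm (as an infimum over $L_{p,w_2}$ decompositions) gives
\[
\|u(t,\cdot) - u_0(\cdot)\|_{H_{p,w_2}^{-1}(\Omega)} \le \|(I^\alpha f)(t,\cdot)\|_{L_{p,w_2}(\Omega)} + \sum_i \|(I^\alpha g_i)(t,\cdot)\|_{L_{p,w_2}(\Omega)}.
\]

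Next, for each $h \in \{f, g_1, \ldots, g_d\}$, Minkowski's integral inequality followed by H\"older's inequality in the time variable with weight $s^\mu$ (exponents $q$ and $q'$) gives, exactly as in the proof of Lemma \ref{lem1115_1},
\[
\|(I^\alpha h)(t,\cdot)\|_{L_{p,w_2}(\Omega)} \le N \left( \int_0^t (t-s)^{q'(\alpha-1)} s^{-(q'-1)\mu}\,\mathrm{d}s \right)^{1/q'} \|h\|_{L_{p,q,w}(\Omega_T)} \le N\, t^{\alpha - (1+\mu)/q} \|h\|_{L_{p,q,w}(\Omega_T)}.
\]
The Beta-type integral on the right is finite precisely when $\alpha > 1/q$ and $\mu < q-1$, which is exactly what the two case-hypotheses on $\alpha$ together guarantee; the resulting exponent $\alpha - (1+\mu)/q$ is obtained by the rescaling $s = tr$. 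Summing over $h$ and then taking the infimum over all decompositions of $\partial_t^\alpha u$ produces the estimate \eqref{eq230606_03}. Since $\alpha - (1+\mu)/q > 0$ under the hypotheses, \eqref{eq230606_03} immediately gives $\|u(t,\cdot) - u_0(\cdot)\|_{H_{p,w_2}^{-1}(\Omega)} \to 0$ as $t \searrow 0$.

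For the final inequality \eqref{eq230606_2}, I would combine \eqref{eq230606_03} with the triangle inequality and the trace estimate $\|u_0\|_{H_{p,w_2}^{-1}(\Omega)} \le N \|u\|_{\cH_{p,q,w}^{\alpha,-1}(\Omega_T)}$ from Lemma \ref{lem1201_1}, absorbing the factor $t^{\alpha-(1+\mu)/q} \le T^{\alpha-(1+\mu)/q}$ into the constant $N(\alpha,q,\mu,T)$, exactly as \eqref{eq230606_1} was derived in Lemma \ref{lem1115_1}. The main technical point is Step~1: justifying that the specific formula $u_0 + I^\alpha f + D_i (I^\alpha g_i)$ really furnishes a representative of the (a.e.-defined) function $t \mapsto u(t,\cdot)$ in $H_{p,w_2}^{-1}(\Omega)$ compatible with the functional identity in $\bH_{p,q,w}^{-1}(\Omega_T)$. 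This can be verified by testing the identity against $\varphi(t,x) = \eta(t)\phi(x)$ with $\phi \in C_0^\infty(\Omega)$ and $\eta \in C_0^\infty(0,T)$ arbitrary, and using Fubini together with the intertwining $\partial_t J^\alpha = J^\alpha \partial_t$ from Lemma \ref{lem0517_1}; once this representative is in place, the remaining estimates are routine.
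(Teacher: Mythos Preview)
Your proposal is correct and follows essentially the same route as the paper. Both arguments invoke Lemma \ref{lem0517_3} to write $u-u_0 = I^\alpha \partial_t^\alpha u$, fix a decomposition $\partial_t^\alpha u = D_i g_i + f$ so that $u-u_0$ is represented by $I^\alpha f + D_i(I^\alpha g_i)$, and then apply the Minkowski--H\"older estimate from the proof of Lemma \ref{lem1115_1} to $I^\alpha f$ and $I^\alpha g_i$; the only cosmetic difference is that the paper bounds the duality pairing $|\langle u(t,\cdot)-u_0,\phi\rangle|$ directly, whereas you bound $\|u(t,\cdot)-u_0\|_{H_{p,w_2}^{-1}(\Omega)}$ via the infimum characterization of the norm.
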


\begin{proof}
Note that \eqref{eq230606_2} follows from \eqref{eq230606_03} with the help of \eqref{eq0114_01} in Lemma \ref{lem1201_1} and the fact that $\alpha - (1+\mu)/q > 0$ under the assumptions on $\alpha, q$ and $\mu$.

Let
\[
\langle \partial_t I^{1-\alpha}(u-u_0), \varphi \rangle_{\bH_{p, q, w}^{-1}(\Omega_T)} = \int_{\Omega_T} \left(f \varphi - g_i D_i \varphi \right) \, \mathrm{d}x \, \mathrm{d}t,
\]
where $f, g_i \in L_{p, q, w}(\Omega_T)$.
From the equality $u-u_0 = I^\alpha \partial_t I^{1-\alpha}(u - u_0)$ in the proof of Lemma \ref{lem0517_3}, it follows that
\[
\langle u-u_0, \varphi \rangle_{\bH_{p, q, w}^{-1}(\Omega_T)} = 
\langle I^\alpha\partial_t I^{1-\alpha}(u-u_0), \varphi \rangle_{\bH_{p, q, w}^{-1}(\Omega_T)}
\]
\[
= \int_{\Omega_T} \left((I^\alpha f) \varphi - (I^\alpha g_i) D_i \varphi \right) \, \mathrm{d}x \, \mathrm{d}t.
\]
Hence,
\[
\langle u(t,\cdot)-u_0(\cdot), \phi \rangle_{H_{p, w_2}^{-1}(\Omega)} = \int_\Omega \left((I^\alpha f)(t,x) \phi(x) - (I^\alpha g_i) D_i \phi(x) \right) \, \mathrm{d}x.
\]
Then, as in the proof of Lemma \ref{lem1115_1},
\begin{multline*}
\left| \langle u(t,\cdot)-u_0(\cdot), \phi \rangle_{H_p^{-1}(\Omega)} \right|
\\
\leq N t^{\alpha - (1+\mu)/q} \left( \|f\|_{L_{p, q, w}(\Omega_T)} + \|g_i\|_{L_{p, q, w}(\Omega_T)}\right) \|\phi\|_{W_{p', w_2'}^1(\Omega)},
\end{multline*}
where $N = N(\alpha,q, \mu)$.
This proves the desired inequality in the lemma.
\end{proof}

\begin{remark}
	\label{rmk230604_2}
For later use, we note the following.
If $u, u_t  \in \bH_{p, q, w}^{-1}(\Omega_T)$, that is, there exist $G_i, F, g_i, f \in L_{p,q,w}(\Omega_T)$ such that, as linear functionals on $\varphi \in \mathring{W}_{p',  q', w'}^{0,1}(\Omega_T)$,
\[
u = D_i G_i + F \quad \text{and} \quad u_t = D_i g_i + f,
\]
then we have a version of $u$ (still denoted by $u$) such that $u(t, \cdot)$ for $t \in [0, T]$ is well-defined in $H_{p, w_2}^{-1}(\Omega)$ and satisfies
\begin{equation*}
	%\label{eq230604_2}
\|u(t, \cdot)\|_{H_{p, w_2}^{-1}(\Omega)} \leq N(q, \mu) T^{-(1+\mu)/q} \left( \|u\|_{\bH_{p, q, w}^{-1}(\Omega_T)} + T \| \partial_t u\|_{\bH_{p, q, w}^{-1}(\Omega_T)} \right).
\end{equation*}
To check this, we assume that the involved functions are sufficiently smooth and make use of the fundamental theorem of calculus.
Then, we employ the approximation argument presented in the proof of Proposition \ref{prop0101_01}.
\end{remark}

\section{Main result: solution spaces and initial traces}
	\label{sol_space}
This section introduces solution spaces for time-fractional evolution equations using the function spaces defined in section \ref{ftn_space}.
Then, as the main results of this paper, we present initial trace theorems along with extension theorems for the constructed solution spaces.

\subsection{Solution spaces}
Let $k$ be a positive integer. 
Note that if $\partial_t^m v \in L_{p, q, w}(\Omega_T)$ (respectively, $\in \bH_{p, q, w}^{-1}(\Omega_T)$) for $m = 0, 1, \ldots, k$, then by Remark \ref{rmk230604_1} (respectively, Remark \ref{rmk230604_2}), $\partial_t^mv(0, x)$, $m = 0, 1, \ldots, k-1$, is well-defined in $L_{p, w_2}(\Omega)$ (respectively, in $H_{p, w_2}^{-1}(\Omega)$). Here, $\partial_t^m v \in \bH_{p, q, w}^{-1}(\Omega_T)$ is defined by, for some $g_{m, i} \in L_{p, q, w}(\Omega_T)$, $i = 1, \ldots, d$, and $f\in L_{p, q, w}(\Omega_T)$, 
\[
\langle \partial_t^mv, \varphi \rangle_{\bH_{p, q, w}^{-1}(\Omega_T)} = \int_{\Omega_T} \left( f \varphi - g_{m, i}D_i\varphi \right) \,\mathrm{d}x\,\mathrm{d}t
\]
for any $\varphi \in \mathring{W}_{p',  q', w'}^{0,1}(\Omega_T)$.
With these observations at hand, we define function spaces whose elements have $(k+\alpha)$-order fractional time derivatives, as well as zero initial values, where $k=1,2,\ldots$.
Recall that we have already addressed the case $k=0$ in Definitions  \ref{def0513_1} and \ref{def0513_6}.
We also define function spaces having both spatial and fractional time regularities with zero initial values.

\begin{definition}
\label{def0307_01}
Let $\alpha \in (0, 1)$, $k$ be a non-negative integer, $1 < p, q < \infty$, and $w = w_1(t)w_2(x) = t^{\mu}w_2(x)$ with $\mu \in (-1, q-1)$ and $w_2 \in A_p(\bR^d)$.

(i) By $u \in \bH_{p, q, w, 0}^{k + \alpha, 0}(\Omega_T)$  with $k \ge 1$, we mean that $\partial_t^m u \in L_{p, q, w}(\Omega_T)$ with  $\partial_t^mu(0, x) = 0$ for $m=0, 1, \ldots, k-1$, and $\partial_t^ku \in \bH_{p, q, w, 0}^{\alpha,0}(\Omega_T)$.
Similarly, by $u \in \cH_{p, q, w, 0}^{k+\alpha, -1}(\Omega_T)$, we mean that $\partial_t^m u \in \bH_{p, q, w}^{-1}(\Omega_T)$ with  $\partial_t^mu(0, x) = 0$ for $m=0, 1, \ldots, k-1$, and $\partial_t^ku \in \cH_{p, q, w, 0}^{\alpha,-1}(\Omega_T)$.

(ii) By $u \in \bH_{p, q, w, 0}^{k + \alpha,2}(\Omega_T)$, we mean that $u \in \bH_{p, q, w, 0}^{k + \alpha, 0}(\Omega_T)$ and $Du, D^2 u \in L_{p, q, w}(\Omega_T)$.
Similarly, by $u \in \cH_{p, q, w, 0}^{k + \alpha, 1}(\Omega_T)$, $u \in \cH_{p, q, w, 0}^{k + \alpha, 0}(\Omega_T)$ and $u, D u \in L_{p, q, w}(\Omega_T)$.
\end{definition}

Using the above definition, we now introduce solution spaces for time-fractional evolution equations.

\begin{definition}
\label{def0102_01}
Let $\alpha \in (0, 1)$, $k$ be a non-negative integer, $1 < p, q < \infty$, and $w = w_1(t)w_2(x) = t^{\mu}w_2(x)$ with $\mu \in (-1, q-1)$ and $w_2 \in A_p(\bR^d)$.
\begin{enumerate}[(i)]
\item For $k = 0$, we define 
\[
\bH_{p, q, w}^{k + \alpha,2}(\Omega_T) = \bH_{p, q, w}^{\alpha,2}(\Omega_T) := \{u \in \bH_{p, q, w}^{\alpha,0}(\Omega_T) : Du, D^2 u \in L_{p, q, w}(\Omega_T)\}
\]
with a norm
\[
\| u \|_{\bH_{p, q, w}^{ \alpha,2}(\Omega_T)} := \left\|  \left|Du\right| + \left|D^2u\right| \right\|_{L_{p, q, w}(\Omega_T)} + \left\|  u \right\|_{\bH_{p, q, w}^{\alpha, 0}(\Omega_T)}.
\]
Similarly, we define
\[
\cH_{p, q, w}^{k+\alpha,1}(\Omega_T) = \cH_{p, q, w}^{\alpha,1}(\Omega_T) := \{ u \in \cH_{p, q, w}^{\alpha,-1}(\Omega_T) : u, Du \in L_{p, q, w}(\Omega_T)\}
\]
with a norm
\[
\| u \|_{\cH_{p, q, w}^{\alpha,1}(\Omega_T)} := \left\| \left| u \right| + \left| Du \right|  \right\|_{L_{p, q, w}(\Omega_T)} + \left\|u \right\|_{\cH_{p, q, w}^{\alpha, -1}(\Omega_T)}.
\]

\item For a positive integer $k$, we define $\bH_{p, q, w}^{k + \alpha,2}(\Omega_T)$ by
\[
 \{u \in L_{p, q, w}(\Omega_T): \partial_t^ku \in \bH_{p, q, w}^{\alpha,0}(\Omega_T), Du, D^2 u, \partial_t u, \ldots, \partial_t^{k-1}u \in L_{p, q, w}(\Omega_T)\}
\]
with a norm
\[
\| u \|_{\bH_{p, q, w}^{k + \alpha,2}(\Omega_T)} := \left\| \left|u\right| + \left|Du\right| + \left|D^2u\right| +\sum_{m=1}^{k-1} \left|\partial_t^m u  \right| \right\|_{L_{p, q, w}(\Omega_T)} + \left\| \partial_t^k u \right\|_{\bH_{p, q, w}^{\alpha, 0}(\Omega_T)}.
\]
Similarly, we define $\cH_{p, q, w}^{k+\alpha,1}(\Omega_T)$ by
\[
 \{u \in L_{p, q, w}(\Omega_T): \partial_t^ku \in \cH_{p, q, w}^{\alpha,-1}(\Omega_T), Du \in L_{p, q, w}(\Omega_T), \partial_t u, \ldots, \partial_t^{k-1}u \in \bH_{p, q, w}^{-1}(\Omega_T)\}
\]
with a norm
\[
\| u \|_{\cH_{p, q, w}^{k+\alpha,1}(\Omega_T)} := \left\| \left| u \right| + \left| Du \right|  \right\|_{L_{p, q, w}(\Omega_T)} + \sum_{m=0}^{k-1} \left\| \partial_t^m u \right\|_{\bH_{p, q, w}^{-1}(\Omega_T)} + \left\| \partial_t^{k} u \right\|_{\cH_{p, q, w}^{\alpha, -1}(\Omega_T)}.
\]
\end{enumerate}
In both cases, we denote $\partial_t^{k+\alpha}u := \partial_t^{\alpha}(\partial_t^k u) = \partial_t I^{1 - \alpha}(\partial_t^k u - u_k)$ for $u_k \in L_{p,w_2}(\Omega)$ and $u_k \in H^{-1}_{p,w_2}(\Omega)$, respectively.
Note that we do not consider the interchange of the integer and fractional order derivatives, that is,
\[
\partial_t^{\alpha}(\partial_t^k u) = \partial_t I^{1-\alpha} \left(\partial_t^ku - u_k \right) \neq  \partial_t^k \left( \partial_tI^{1-\alpha} \left( u - u_0 \right) \right) =  \partial_t^k(\partial_t^{\alpha} u).
\]
\end{definition}

\begin{remark}
	\label{rmk0102_02}
If $u$ is sufficiently smooth, a standard definition for $(k+\alpha)$-times fractional derivative of $u$ in the literature is
\begin{equation}
	\label{eq0605_1}
\partial_t^{k+1} I^{1-\alpha} \left( u(t) - \sum_{m=0}^k \frac{t^m}{m!} \partial_t^mu(0) \right).
\end{equation}
It is worth noting that the $(k+\alpha)$-times fractional derivative considered in this paper ($\partial_t^{k+\alpha}u := \partial_t^{\alpha}(\partial_t^k u)$ in Definition \ref{def0102_01}) can also be represented as in \eqref{eq0605_1}.
Indeed, for $u \in \bH_{p, q, w}^{k+\alpha, 2}(\Omega_T)$, if $\alpha \in [(1+\mu)/q, 1)$, there is a $u_k \in L_{p, w_2}(\Omega)$ such that $\partial_t^{k+\alpha}u = \partial_t I^{1-\alpha} \left( \partial_t^k u - u_k \right)$ in $L_{p, q, w}(\Omega_T)$. On the other hand, since $\partial_t^m u(0, x) \in L_{p, w_2}(\Omega)$, $m = 0, 1, \ldots, k-1$, exists at least in the trace sense (Remark \ref{rmk230604_1}), we also have $\{ u_m \}_{m=0}^k \subset L_{p, w_2}(\Omega)$ such that
\[
\partial_t^{k+\alpha} u = \partial_t I^{1-\alpha} \left( \partial_t^k u - u_k \right) = \partial_t I^{1-\alpha} \left( \partial_t^k \left( u - \sum_{m=0}^k \frac{t^m}{m!}u_m \right) \right)
\]
\[
= \partial_t^{k+1} I^{1-\alpha} \left( u - \sum_{m=0}^k \frac{t^m}{m!}u_m \right) \quad \textrm{in} \quad L_{p, q, w}(\Omega_T),
\]
where, for the last equality, one can directly check that $I^{1-\alpha} \partial_t^k v = \partial_t^k I^{1-\alpha}v$ with $v := u - \sum_{m=0}^k \frac{t^m}{m!}u_m \in \bH_{p, q, w, 0}^{k + \alpha, 0}(\Omega_T)$.
If $\alpha \in (0, (1+\mu)/q)$, we just set $u_k = 0$. 

Similarly, for $u \in \cH_{p, q, w}^{k+\alpha, 1}(\Omega_T)$ with $\alpha \in [(1+\mu)/q, 1)$, there exists $\{ u_m \}_{m=0}^k \subset H_{p, w_2}^{-1}(\Omega)$ (Remark \ref{rmk230604_2}) such that
\[
\partial_t^{k+\alpha} u = \partial_t^{k+1} I^{1-\alpha} \left( u - \sum_{m=0}^k \frac{t^m}{m!}u_m \right) \quad \textrm{in} \quad \bH_{p, q, w}^{-1}(\Omega_T),
\]
where $u - \sum_{m=0}^k \frac{t^m}{m!}u_m \in \cH_{p, q, w, 0}^{k + \alpha, -1}(\Omega_T)$. If $\alpha \in (0, (1+\mu)/q)$, we just set $u_k = 0$.
\end{remark}

\begin{proposition}
$\bH_{p, q, w}^{k + \alpha,2}(\Omega_T)$ and $\cH_{p, q, w}^{k+\alpha,1}(\Omega_T)$ are Banach spaces.
\end{proposition}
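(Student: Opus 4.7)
The plan is to verify that the given norms are genuine norms (which is routine, being inherited from $L_{p,q,w}(\Omega_T)$ and $\bH_{p,q,w}^{-1}(\Omega_T)$), and then to establish completeness by componentwise extraction of limits. I will describe the argument for $\bH_{p,q,w}^{k+\alpha,2}(\Omega_T)$; the space $\cH_{p,q,w}^{k+\alpha,1}(\Omega_T)$ is handled analogously, replacing $L_{p,q,w}$ by $\bH_{p,q,w}^{-1}$ and substituting Lemma \ref{lem1201_1} for Lemma \ref{lem0525_1}, and Lemma \ref{lem0517_3} for Lemma \ref{lem0517_2}.

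Let $\{u_n\} \subset \bH_{p,q,w}^{k+\alpha,2}(\Omega_T)$ be Cauchy. By the definition of the norm, the sequences $\{u_n\}$, $\{Du_n\}$, $\{D^2 u_n\}$, $\{\partial_t^m u_n\}$ for $m=1,\ldots,k$, and $\{\partial_t^{k+\alpha}u_n\}$ are each Cauchy in $L_{p,q,w}(\Omega_T)$. Extracting limits and identifying distributional derivatives by testing against $C_0^\infty$ functions, I obtain $u \in L_{p,q,w}(\Omega_T)$ with $D^\gamma u_n \to D^\gamma u$ for $|\gamma|\le 2$ and $\partial_t^m u_n \to \partial_t^m u$ in $L_{p,q,w}(\Omega_T)$ for $m=1,\ldots,k$, and an element $f \in L_{p,q,w}(\Omega_T)$ with $\partial_t^{k+\alpha}u_n \to f$. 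The substantive step is to show that $\partial_t^k u \in \bH_{p,q,w}^{\alpha,0}(\Omega_T)$ with $\partial_t^\alpha(\partial_t^k u) = f$. When $\alpha \in [(1+\mu)/q,1)$, each $u_n$ admits by Lemma \ref{lem0513_2} a unique $u_{n,k} \in L_{p,w_2}(\Omega)$ such that $\partial_t^k u_n - u_{n,k} \in \bH_{p,q,w,0}^{\alpha,0}(\Omega_T)$, and applying the trace inequality of Lemma \ref{lem0525_1} to $\partial_t^k(u_n - u_m)$ shows that $\{u_{n,k}\}$ is Cauchy in $L_{p,w_2}(\Omega)$ with some limit $u_k$. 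When $\alpha \in (0,(1+\mu)/q)$, Lemma \ref{lem0513_1} trivializes the issue and I set $u_k = 0$.

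It remains to pass to the limit in the defining identity
\[
\int_{\Omega_T} I^{1-\alpha}(\partial_t^k u_n - u_{n,k})\, \varphi_t\, \mathrm{d}x\, \mathrm{d}t = -\int_{\Omega_T} \partial_t^{k+\alpha}u_n \cdot \varphi\, \mathrm{d}x\, \mathrm{d}t
\]
for $\varphi \in C_0^\infty([0,T)\times\Omega)$. The right-hand side converges to $-\int_{\Omega_T} f \varphi$ by $L_{p,q,w}$-convergence. For the left-hand side, I use that $I^{1-\alpha}$ is bounded on $L_{p,q,w}(\Omega_T)$ with operator norm at most $N T^{1-\alpha}$; this follows from Lemma \ref{lem0224} applied with exponent $1-\alpha$, together with the pointwise bound $|I^{1-\alpha}g| \le T^{1-\alpha} \cdot t^{-(1-\alpha)}|I^{1-\alpha}g|$ valid for $t \le T$. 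Combined with the embedding of $L_{p,w_2}(\Omega)$ into $L_{p,q,w}(\Omega_T)$ as constant-in-$t$ functions (finite since $\mu > -1$), this gives $I^{1-\alpha}(\partial_t^k u_n - u_{n,k}) \to I^{1-\alpha}(\partial_t^k u - u_k)$ in $L_{p,q,w}(\Omega_T)$. By Definition \ref{def0513_1} the identity passes to the limit, so $\partial_t^k u - u_k \in \bH_{p,q,w,0}^{\alpha,0}(\Omega_T)$ and $\partial_t^\alpha(\partial_t^k u) = f$. Hence $u \in \bH_{p,q,w}^{k+\alpha,2}(\Omega_T)$ with $u_n \to u$ in its norm. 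The only delicate point, and therefore the main (minor) obstacle, is the case split on $\alpha$ versus $(1+\mu)/q$: the trace inequality only produces a Cauchy sequence of initial slices $\{u_{n,k}\}$ in the high-$\alpha$ regime, so the low-$\alpha$ regime must be dispatched separately via the identification $\bH_{p,q,w,0}^{\alpha,0} = \bH_{p,q,w}^{\alpha,0}$ of Lemma \ref{lem0513_1}.
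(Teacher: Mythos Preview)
Your proof is correct and follows essentially the same approach as the paper: extract componentwise limits from the Cauchy sequence, use the trace inequality (Lemma \ref{lem0525_1}, resp.\ Lemma \ref{lem1201_1}) to obtain convergence of the initial slices $u_{n,k}$ when $\alpha \ge (1+\mu)/q$, and pass to the limit in the defining identity via the $L_{p,q,w}$-boundedness of $I^{1-\alpha}$. Your treatment is in fact slightly more explicit than the paper's, which only writes out the case $\alpha \in [(1+\mu)/q,1)$ and declares the remaining case similar.
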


\begin{proof}
We only consider the case $\alpha \in [(1+\mu)/q, 1)$ since the proof for the case $\alpha \in (0, (1+\mu)/q)$ is similar.
Let $\{ u^n \}_{n=1}^{\infty}$ is a Cauchy sequence in $\bH_{p, q, w}^{k + \alpha,2}(\Omega_T)$. It is clear that there is a $u \in L_{p, q, w}(\Omega_T)$ such that $Du, D^2u, \partial_tu, \ldots, \partial_t^k u \in L_{p, q, w}(\Omega_T)$ and $u^n, Du^n, D^2u^n,  \partial_tu^n, \ldots, \partial_t^k u^n$ converge to $u, Du, D^2u, \partial_tu, \ldots, \partial_t^k u$ in $L_{p, q, w}(\Omega_T)$, respectively.
On the other hand, there is a (unique) $u^n_k \in L_{p, w_2}(\Omega)$ such that $\partial_t^ku^n - u^n_k \in \bH_{p, q, w, 0}^{\alpha, 0}(\Omega_T)$. By Lemma \ref{lem0525_1}, $\{ u^n_k \}_{n=1}^{\infty}$ is a Cauchy sequence in $L_{p, w_2}(\Omega)$, so there is a $u_k \in L_{p, w_2}(\Omega)$ satisfying $u^n_k \to u_k$ in $L_{p, w_2}(\Omega)$. Suppose $\partial_t^{k+\alpha}u^n \to f$ in $L_{p, q, w}(\Omega_T)$. Then, for all $\varphi \in C_0^\infty\left([0,T) \times \Omega\right)$,
\[
\int_{\Omega_T}I^{1-\alpha} (\partial_t^ku - u_k) \, \varphi_t \, \mathrm{d}x \, \mathrm{d}t = \lim_{n \to \infty} \int_{\Omega_T}I^{1-\alpha} (\partial_t^ku^n - u^n_k) \, \varphi_t \, \mathrm{d}x \, \mathrm{d}t = - \int_{\Omega_T} f \, \varphi \, \mathrm{d}x \, \mathrm{d}t,
\]
where we use the fact that $\| I^{1-\alpha} f \|_{L_{p, q, w}(\Omega_T)} \le N(\alpha, q, T) \| f \|_{L_{p, q, w}(\Omega_T)}$ and $\partial_t^ku, u_k \in L_{p, q, w}(\Omega_T)$. Hence, $\partial_t^{k+\alpha}u = f$ in $L_{p, q, w}(\Omega_T)$ and $u^n$ converges to $u \in \bH_{p, q, w}^{k+\alpha, 2}(\Omega_T)$.

The proof for $\cH_{p, q, w}^{k + \alpha, 1}(\Omega_T)$ is almost the same, except that we use Lemma \ref{lem1201_1} instead of Lemma \ref{lem0525_1}. The proposition is proved.
\end{proof}

\begin{lemma}
\label{lem0102_04}
If $u \in \bH_{p, q, w, 0}^{k+\alpha, 2}(\Omega_T)$ (respectively, $\cH_{p, q, w, 0}^{k+\alpha,1}(\Omega_T)$), there exists a sequence $\{u_n\}$ such that $u_n \in C^\infty([0,T] \times \Omega)$, $u_n(t,x)$ vanishes for large $|x|$ (if $\Omega$ is unbounded), $\partial_t^m u_n(0,x) = 0$ for $m = 0, 1, \ldots, k$, and $u_n \to u$ in $\bH_{p, q, w}^{k+\alpha, 2}(\Omega_T)$ (respectively, $\cH_{p, q, w}^{k+\alpha,1}(\Omega_T)$) as $n \to \infty$.
\end{lemma}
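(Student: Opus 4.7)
The plan is to extend the mollification arguments of Propositions \ref{prop0514_1} and \ref{prop0101_01} to handle the higher-order time regularity, using that $\partial_t^k u$ lies in the $k=0$ space to which those propositions already apply, and that all lower-order time derivatives vanish at $t=0$.

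First, if $\Omega$ is unbounded, I would apply a standard spatial cutoff. Fix $\zeta_R \in C_0^\infty(\Omega)$ with $\zeta_R \equiv 1$ on $\{|x|<R\} \cap \Omega$ and $|D^\gamma \zeta_R| \leq N R^{-|\gamma|}$ for $|\gamma| \leq 2$. Since $\zeta_R$ depends only on $x$, it commutes with all time operations, so $\partial_t^m(\zeta_R u)(0,x)=\zeta_R(x)\partial_t^m u(0,x)=0$ for $m=0,\ldots,k-1$ and $\partial_t^{k+\alpha}(\zeta_R u) = \zeta_R \partial_t^{k+\alpha} u$. The convergence $\zeta_R u \to u$ in $\bH_{p,q,w,0}^{k+\alpha,2}(\Omega_T)$ then follows from the dominated convergence theorem and the Leibniz rule (the terms in $D^2(\zeta_R u)$ where derivatives fall on $\zeta_R$ are $O(R^{-1}\|Du\|+R^{-2}\|u\|)$, which vanish in the $L_{p,q,w}$ norm as $R \to \infty$).

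Next, assume $u$ has compact $x$-support and set, as in Proposition \ref{prop0101_01},
\[
\varphi_\varepsilon(t,x)=\varepsilon^{-d-2}\eta(t/\varepsilon^2)\phi(x/\varepsilon), \qquad u^{(\varepsilon)}(t,x)=\int_{\Omega_T} u(s,y)\varphi_\varepsilon(t-s,x-y)\,\mathrm{d}y\,\mathrm{d}s,
\]
where $\eta \in C_0^\infty((0,1))$ and $\phi\in C_0^\infty(B_1)$ are nonnegative with integral $1$. Because $\eta$ is supported in $(0,\infty)$, for any $m\geq 0$ and $s>0$ we have $\eta^{(m)}(-s/\varepsilon^2)=0$, whence $\partial_t^m u^{(\varepsilon)}(0,x)=0$ for every $m$; also $u^{(\varepsilon)} \in C^\infty([0,T]\times\Omega)$ and vanishes for large $|x|$. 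Standard weighted mixed-norm mollification theory gives $D^\gamma u^{(\varepsilon)} \to D^\gamma u$ in $L_{p,q,w}(\Omega_T)$ for $|\gamma|\leq 2$ and $\partial_t^m u^{(\varepsilon)} \to \partial_t^m u$ in $L_{p,q,w}(\Omega_T)$ for $m=0,\ldots,k$.

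The key remaining point is convergence of $\partial_t^{k+\alpha}u^{(\varepsilon)}$. As in the derivation of \eqref{eq0516_01}, the choice of $\eta$ supported in $(0,1)$ allows the interchange
\[
I^{1-\alpha}\bigl(\partial_t^k u^{(\varepsilon)}\bigr)(t,x) = \bigl(I^{1-\alpha}\partial_t^k u\bigr)^{(\varepsilon)}(t,x),
\]
so that, applying $\partial_t$ and using Definition \ref{def_0223_1} (together with $\partial_t^m u(0,\cdot)=0$ for $m<k$, which ensures $\partial_t^k u \in \bH_{p,q,w,0}^{\alpha,0}(\Omega_T)$, and the trace identification $u_k = \partial_t^k u(0,\cdot)=0$),
\[
\partial_t^{k+\alpha}u^{(\varepsilon)} = \partial_t I^{1-\alpha}\partial_t^k u^{(\varepsilon)} = \partial_t\bigl(I^{1-\alpha}\partial_t^k u\bigr)^{(\varepsilon)} = \bigl(\partial_t^{k+\alpha}u\bigr)^{(\varepsilon)}.
\]
Since $\partial_t^{k+\alpha}u \in L_{p,q,w}(\Omega_T)$, mollifier convergence yields $\partial_t^{k+\alpha}u^{(\varepsilon)}\to \partial_t^{k+\alpha}u$ in $L_{p,q,w}(\Omega_T)$. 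Thus $u_n:=u^{(\varepsilon_n)}$ (with $\varepsilon_n \to 0$, combined with the spatial cutoff diagonally) is the desired sequence.

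For $\cH_{p,q,w,0}^{k+\alpha,1}(\Omega_T)$, the same scheme applies, but the mollification must be interpreted in the $\bH_{p,q,w}^{-1}$-sense as in Proposition \ref{prop0101_01}; writing $\partial_t^m u = D_i g_{m,i}+f_m$ with $g_{m,i},f_m \in L_{p,q,w}$ and mollifying the $L_{p,q,w}$ pieces reduces everything to the non-divergence argument. I expect the main obstacle to be the bookkeeping in Step 3: verifying rigorously that $I^{1-\alpha}$ commutes with the space-time mollification on $\Omega_T$ (rather than on all of $\bR^{d+1}$), which requires the support condition $\eta = 0$ on $(-\infty,0]$ and a careful truncation/extension of $\partial_t^k u$ and $I^{1-\alpha}\partial_t^k u$ by zero to $t<0$, exactly as done in the proof of Proposition \ref{prop0101_01}.
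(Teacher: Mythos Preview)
Your approach is correct and essentially the same as the paper's: both arguments reduce to the mollification scheme of Propositions \ref{prop0514_1} and \ref{prop0101_01} after a spatial cutoff, and both rely on the commutation $\partial_t^{k+\alpha} u^{(\varepsilon)} = (\partial_t^{k+\alpha} u)^{(\varepsilon)}$.

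One point to make explicit: when you write ``standard weighted mixed-norm mollification theory gives $\partial_t^m u^{(\varepsilon)} \to \partial_t^m u$ for $m=0,\ldots,k$'', this already presumes the identity $\partial_t^m u^{(\varepsilon)} = (\partial_t^m u)^{(\varepsilon)}$ on $\Omega_T$. Because the mollification integrates only over $s\in(0,T)$, the integration by parts that transfers $\partial_t$ from the mollifier to $u$ produces a boundary term $\partial_t^{m-1}u(0,y)\,\varphi_\varepsilon(t,x-y)$, which vanishes precisely because $\partial_t^{m-1}u(0,\cdot)=0$. This is exactly the step the paper singles out (for the divergence case) as the new ingredient beyond $k=0$; your final paragraph shows you are aware of the issue for $I^{1-\alpha}$, but the same care is needed one level earlier, for the integer-order derivatives themselves.
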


\begin{proof}
The proof is almost identical to the proofs of Propositions \ref{prop0514_1} and \ref{prop0101_01}.
For simplicity, we give a sketched proof. In particular, for the case of $u \in \cH_{p, q, w, 0}^{k+\alpha,1}(\Omega_T)$ with $\Omega = \bR^d$, as in the proof of Proposition \ref{prop0101_01}, we set for $w \in \bH_{p, q, w}^{-1}(\Omega_T)$, 
\[
w^{(\varepsilon)}(t, x) = \langle w, \varphi_\varepsilon (t-\cdot,x-\cdot) \rangle_{\bH_{p, q, w}^{-1}(\Omega_T)}, \quad (t,x) \in \Omega_T,
\]
where $\varphi_\varepsilon(t-s,x-y) \in C_0^\infty([0,T) \times \Omega)$ as a function of $(s,y)$.
Then, for $m = 0, 1, \ldots, k-1$, since $\partial_t^m u (0, x) = 0$ in the trace sense, one verifies that $\partial_t^{m+1} u^{(\varepsilon)} = (\partial_t^{m+1} u)^{(\varepsilon)}$.
Indeed, if $v, \partial_t v \in \bH_{p, q, w}^{-1}(\Omega_T)$ with $v(0, x) = 0$, there exists $g_i, f \in L_{p, q, w}(\Omega_T)$ such that
\[
\langle \partial_t v, \varphi \rangle_{\bH_{p, q, w}^{-1}(\Omega_T)} = -
\langle v, \varphi_t \rangle_{\bH_{p, q, w}^{-1}(\Omega_T)} = \int_{\Omega_T} \left(f \varphi - g_i D_i \varphi\right) \, \mathrm{d}x \, \mathrm{d}t
\]
for all $\varphi \in C_0^\infty\left([0,T) \times \Omega\right)$. From this fact and the choice of $\eta$ where $\varphi(t, x) = \eta(t)\phi(x)$ (see the proof of Proposition \ref{prop0101_01}), we have $\partial_t^{m+1} u^{(\varepsilon)} = (\partial_t^{m+1} u)^{(\varepsilon)}$ for $m = 0, 1, \ldots, k-1$. Then it follows that $\partial_t I^{1-\alpha } \partial_t^k u^{(\varepsilon)} = \partial_t I^{1-\alpha} (\partial_t^k u)^{(\varepsilon)} = (\partial_t I^{1-\alpha} \partial_t^k u)^{(\varepsilon)} \to \partial_t I^{1-\alpha} \partial_t^k u$ in $\bH_{p, q, w}^{-1}(\Omega_T)$. $Du^{(\varepsilon)} = (Du)^{(\varepsilon)} \to Du$ in $L_{p, q, w}(\Omega_T)$ is directly follows from the definition of $u^{(\varepsilon)}$.
\end{proof}

\begin{proposition}
\label{prop0102_03}
If $u \in \bH_{p, q, w}^{k+\alpha, 2}(\Omega_T)$ (respectively, $\cH_{p, q, w}^{k+\alpha,1}(\Omega_T)$), there exists a sequence $\{u_n\}$ such that $u_n \in C^\infty([0,T] \times \Omega)$, $u_n(t,x)$ vanishes for large $|x|$ (if $\Omega$ is unbounded), and $u_n \to u$ in $\bH_{p, q, w}^{k+\alpha, 2}(\Omega_T)$ (respectively, $\cH_{p, q, w}^{k+\alpha,1}(\Omega_T)$) as $n \to \infty$.
In particular, if $\alpha \in [(1+\mu)/q,1)$, for $v \in L_{p, w_2}(\Omega)$ (respectively, $H_{p, w_2}^{-1}(\Omega)$) satisfying $\partial_t^ku - v \in \bH_{p, q, w, 0}^{\alpha, 0}(\Omega_T)$ (respectively, $\cH_{p, q, w, 0}^{\alpha,-1}(\Omega_T)$), we have $\partial_t^ku_n(0, \cdot) \to v$ in $L_{p, w_2}(\Omega)$ (respectively, $H_{p, w_2}^{-1}(\Omega)$) as $n \to \infty$.
\end{proposition}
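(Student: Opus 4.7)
The plan is to follow the mollification strategy of Propositions \ref{prop0514_1} and \ref{prop0101_01} via two successive mollifications---first in the spatial variable, then in the temporal variable with a ``forward-looking'' kernel so that the initial trace is captured by $u_n(0,\cdot)$ instead of being forced to vanish (which is what happened in Lemma \ref{lem0102_04}). As in the proof of Proposition \ref{prop0101_01}, a spatial partition of unity reduces to $\Omega = \bR^d$, and then multiplication by a cutoff $\zeta_R \in C_0^\infty(\bR^d)$ reduces further to the case of $u$ compactly supported in $x$, since $\zeta_R u \to u$ in $\bH_{p,q,w}^{k+\alpha,2}(\Omega_T)$ (resp.\ $\cH_{p,q,w}^{k+\alpha,1}(\Omega_T)$) as $R \to \infty$ by direct inspection of the norm.

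First, I convolve with a standard spatial mollifier $\phi_\varepsilon$ to form $u_\varepsilon := u *_x \phi_\varepsilon$. Spatial convolution commutes with $\partial_t$, $D^\gamma$, and $I^{1-\alpha}$, and maps the zero-trace space $\bH_{p,q,w,0}^{\alpha,0}(\Omega_T)$ into itself; consequently $\partial_t^k u_\varepsilon - v *_x \phi_\varepsilon \in \bH_{p,q,w,0}^{\alpha,0}(\Omega_T)$, from which one deduces both $u_\varepsilon \to u$ in $\bH_{p,q,w}^{k+\alpha,2}(\Omega_T)$ and $\partial_t^k u_\varepsilon(0,\cdot) = v *_x \phi_\varepsilon \to v$ in $L_{p,w_2}(\Omega)$ as $\varepsilon \to 0$. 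Second, I extend $u_\varepsilon$ beyond $t = T$ by a higher-order reflection that preserves the first $k$ integer time derivatives (and keeps $\partial_t^{k+\alpha} u_\varepsilon$ controlled), and then convolve in $t$ with a mollifier $\eta_\delta \in C_0^\infty(\bR)$ satisfying $\mathrm{supp}(\eta_\delta) \subset [-\delta,0]$ and $\int \eta_\delta = 1$, producing $u_{\varepsilon,\delta} \in C^\infty([0,T] \times \bR^d)$. The identity $\partial_t I^{1-\alpha} u_{\varepsilon,\delta} = (\partial_t I^{1-\alpha} u_\varepsilon) * \eta_\delta$ (modulo boundary contributions at $t = T$ that are killed by the reflection and at $t = 0$ that are killed by the support of $\eta_\delta$), derived exactly as in the proof of Proposition \ref{prop0101_01}, combined with standard mollification estimates, yields $u_{\varepsilon,\delta} \to u_\varepsilon$ in $\bH_{p,q,w}^{k+\alpha,2}(\Omega_T)$ as $\delta \to 0$. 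For the initial trace, $\mathrm{supp}(\eta_\delta) \subset [-\delta,0]$ forces
$\partial_t^k u_{\varepsilon,\delta}(0,x) = \int_{0}^{\delta} \partial_t^k u_\varepsilon(s,x) \eta_\delta(-s)\,\mathrm{d}s$,
a weighted average of $\partial_t^k u_\varepsilon(s,\cdot)$ over $s \in (0,\delta)$; Lemma \ref{lem1115_1} applied to $\partial_t^k u_\varepsilon \in \bH_{p,q,w}^{\alpha,0}(\Omega_T)$ then shows that this tends to $v *_x \phi_\varepsilon$ in $L_{p,w_2}(\Omega)$. A diagonal sequence $u_n := u_{\varepsilon_n,\delta_n}$ with $\varepsilon_n, \delta_n \to 0$ chosen suitably produces the approximation with the claimed trace convergence.

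The principal obstacle is the construction of the temporal extension across $t = T$: one must preserve not only the $k$ integer time derivatives but also $\partial_t^{k+\alpha} u_\varepsilon \in L_{p,q,w}(\Omega_T)$. This is handled by a standard higher-order reflection of the form $\tilde u_\varepsilon(t,x) = \sum_{j=1}^{k+2} c_j\, u_\varepsilon\bigl(-\lambda_j(t-T) + T,\, x\bigr)$ for $t \in (T, T+\sigma)$, with positive coefficients $c_j$ and $\lambda_j$ chosen by a Vandermonde-type argument so that $\tilde u_\varepsilon$ matches $u_\varepsilon$ together with all of its first $k$ time derivatives at $t = T$; the fractional norm then transfers because the reflected piece is bounded in $L_{p,q,w}$ in terms of the original. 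The divergence case $\cH_{p,q,w}^{k+\alpha,1}(\Omega_T)$ is treated in parallel, replacing the $L_{p,q,w}$-mollification identities by their $\bH_{p,q,w}^{-1}$-valued analogs already developed in the proof of Proposition \ref{prop0101_01}.
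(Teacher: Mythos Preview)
Your plan has a real gap at the commutation step $\partial_t I^{1-\alpha} u_{\varepsilon,\delta} = (\partial_t I^{1-\alpha} u_\varepsilon) * \eta_\delta$. In Proposition~\ref{prop0101_01} the analogous identity $(I^{1-\alpha} h)^{(\varepsilon)} = I^{1-\alpha} h^{(\varepsilon)}$ works precisely because $\eta$ is supported in $(0,1)$: then $h^{(\varepsilon)}$ vanishes on $(-\infty,0]$ whenever $h$ does, and $I^{1-\alpha}$ (convolution with $t^{-\alpha}1_{t>0}/\Gamma(1-\alpha)$ on the half-line) commutes with the mollification by associativity. With your forward-looking $\eta_\delta$ supported in $[-\delta,0]$, the function $w*\eta_\delta$ (with $w = \partial_t^k u_\varepsilon - v*\phi_\varepsilon$, extended by $0$ for $t<0$) is in general nonzero on $(-\delta,0)$, so on $(0,T)$ one has instead
\[
I^{1-\alpha}(w*\eta_\delta)(t) = \bigl((I^{1-\alpha} w)*\eta_\delta\bigr)(t) - \frac{1}{\Gamma(1-\alpha)}\int_{-\delta}^{0}(t-s)^{-\alpha}(w*\eta_\delta)(s)\,\mathrm{d}s.
\]
After applying $\partial_t$, this error term and the separate contribution $c_\delta\, t^{-\alpha}/\Gamma(1-\alpha)$ from the nonzero constant $c_\delta = v*\phi_\varepsilon - \partial_t^k u_{\varepsilon,\delta}(0)$ each produce singularities near $t=0$ that are individually not in $L_{q,w_1}(0,T)$; you would need to prove a cancellation leaving a remainder in $L_{p,q,w}$ that tends to $0$ as $\delta\to 0$, and this is not addressed. (The remark that ``boundary contributions at $t=0$ are killed by the support of $\eta_\delta$'' is backwards---the forward-looking support is what creates them.) Separately, your appeal to Lemma~\ref{lem1115_1} for trace convergence needs the strictly stronger hypothesis $\alpha > \max\{(1+\mu)/q,\,1/q\}$, which fails at the endpoint $\alpha = (1+\mu)/q$ and on the range $(1+\mu)/q \le \alpha \le 1/q$ when $\mu<0$.

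The paper avoids both problems by first subtracting the Taylor polynomial: using Remark~\ref{rmk0102_02}, write $u = \bigl(u - \sum_{m=0}^k \tfrac{t^m}{m!} u_m\bigr) + \sum_{m=0}^k \tfrac{t^m}{m!} u_m$ with $\{u_m\}$ the initial traces; mollify only in $x$ so that the polynomial piece already lies in $C^\infty([0,T]\times\Omega)$; and then apply Lemma~\ref{lem0102_04} (with its backward-looking mollifier, for which the commutation \emph{does} hold) to the first piece, which belongs to the zero-trace space. No reflection beyond $t=T$ is needed, and the convergence $\partial_t^k u_n(0,\cdot)\to v$ follows from the trace inequality (Lemma~\ref{lem0525_1}, resp.\ Lemma~\ref{lem1201_1}) under the mere assumption $\alpha \ge (1+\mu)/q$.
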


\begin{proof}
We prove only the case when $\Omega = \bR^d$.
For a general $\Omega$, one can use a partition of unity argument, for example, in \cite{MR0164252}.

 Let $u \in \cH_{p, q, w}^{k+\alpha, 1}(\Omega_T)$. As mentioned in Remark \ref{rmk0102_02}, there exists $\{ u_m \}_{m=0}^k \subset H_{p, w_2}^{-1}(\Omega)$ such that $u - \sum_{m=0}^k \frac{t^m}{m!}u_m \in \cH_{p, q, w, 0}^{k + \alpha, -1}(\Omega_T)$. 
Now for a given $\delta > 0$, by taking mollification with respect to $x$ variable (after multiplying cut-off function if necessary) to $u(t, x)$ and $\{ u_m(x) \}_{m=0}^k$, we obtain a $v \in \cH_{p, q, w}^{k+\alpha, 1}(\Omega_T)$ and $\{v_m(x)\}_{m=0}^k \subset C_0^{\infty}(\Omega) \subset C^{\infty}([0, T] \times \Omega)$ such that $\| u - v \|_{\cH_{p, q, w}^{k+\alpha, 1}(\Omega_T)} < \delta/2$ and $v - \sum_{m=0}^k \frac{t^m}{m!}v_m \in \cH_{p, q, w, 0}^{k + \alpha, 1}(\Omega_T)$. 
Here, for $u_m \in H_{p, w_2}^{-1}(\Omega)$, we define its mollification $u_m^{(\varepsilon)} (= v_m)$ by
\[
u_m^{(\varepsilon)}(x) := \langle u_m, \phi_{\varepsilon}(x - \cdot)\rangle_{H_{p, w_2}^{-1}(\Omega)},
\]
where $\phi(x)$ is a standard mollifier and $\phi_{\varepsilon}(x) = \varepsilon^{-d}\phi(x/\varepsilon)$.
Then by Lemma \ref{lem0102_04}, there is a sequence $\{w_n\}$ such that $w_n \in C^\infty([0,T] \times \Omega)$, $w_n(t,x)$ vanishes for large $|x|$, and $w_n \to v - \sum_{m=0}^k \frac{t^m}{m!}v_m$ in $\cH_{p, q, w, 0}^{k+\alpha,1}(\Omega_T)$ as $n \to \infty$. That is, for sufficiently large $n$, we have $\|u_n - v \|_{\cH_{p, q, w}^{k+\alpha, 1}(\Omega_T)} < \delta/2$ where $u_n := w_n + \sum_{m=0}^k \frac{t^m}{m!}v_m$, and then $\|u_n - u \|_{\cH_{p, q, w}^{k+\alpha, 1}(\Omega_T)} < \delta$. Therefore, $\{u_n \}$ is the desired sequence.

Note that by Lemma \ref{lem1201_1}$, \partial_t^k u_n(0, \cdot) \to v$ in $H_{p, w_2}^{-1}(\Omega)$ as $n \to \infty$, where $v$ satisfies $\partial_t^ku - v \in \cH_{p, q, w, 0}^{\alpha, -1}(\Omega_T)$.
The proof for the case $u \in \bH_{p, q, w}^{k+\alpha, 2}(\Omega_T)$ is almost the same.
The proposition is proved.
\end{proof}

\begin{remark}
The fractional derivative $\partial_t^{\alpha}$ we use in this paper is of the Caputo type, but there are many other notions of fractional derivatives.
For example, in \cite{MR3488533}, the authors use the Marchaud fractional derivative $\mathcal{D}_t^{\alpha}$ to investigate a De Giorgi-Nash-Moser H\"older regularity theory for solutions to equations in divergence form. For sufficiently smooth function $u$ defined on $[0, T]$, $\mathcal{D}_t^{\alpha}u$ is defined by
\[
\Gamma(1-\alpha) \mathcal{D}_t^{\alpha}u(t) := \frac{u(t) - u(0)}{t^{\alpha}} + \alpha \int_0^t \frac{u(t) - u(s)}{(t-s)^{\alpha + 1}}\,\mathrm{d}s, \quad 0 < t < T,
\]
where $\alpha \in (0, 1)$.
Note that, by using integration by parts, we verify that $\partial_t^{\alpha}u$ and $\mathcal{D}_t^{\alpha}u$ are equivalent if $u \in C^1[0, T]$.
Also, for $u \in \bH_{p, q, w}^{\alpha, 2}(\Omega_T)$ with $\alpha \ge (1+\mu)/q$,
we have
\[
\Gamma(1-\alpha) \mathcal{D}_{t, \varepsilon}^{\alpha}u := \frac{u(t, x) - u_0(x)}{t^{\alpha}} + \alpha \int_0^{t-\varepsilon} \frac{u(t, x) - u(s, x)}{(t-s)^{\alpha + 1}}\,\mathrm{d}s \to \Gamma(1-\alpha) \partial_t^{\alpha}u
\]
in $L_{p, q, w}(\Omega_T)$ as $\varepsilon \to 0$.
\end{remark}

\subsection{Spaces for initial values}
To establish the initial trace spaces for $\bH_{p, q, w}^{k+\alpha, 2}(\Omega_T)$ and $\cH_{p, q, w}^{k+\alpha, 1}(\Omega_T)$, we introduce (weighted) Besov and Bessel potential spaces.

\begin{definition}
\label{def_besov}
Take a $\psi \in \cS(\bR^d)$ such that $\operatorname{supp} \hat{\psi} = \{ 1/2 \le |\xi| \le 2\}$, $\hat{\psi} \ge 0$, and $\sum_{j \in \bZ} \hat{\psi}(2^{-j}\xi) = 1$ for $\xi \neq 0$ where $\hat{\psi} := \mathcal{F}\psi$.
Denote $\hat{\psi}_j(\xi) = \hat{\psi}(2^{-j}\xi)$ for $j = \pm 1, \pm2, \ldots$ and $\hat{\Psi}(\xi) = 1 - \sum_{j=1}^{\infty} \hat{\psi_j}(\xi)$. 

For $1 < p, q < \infty$ and $s \in \bR$, we define the weighted Besov space as follows:
\[
B_{p, q, w_2}^s = B_{p, q, w_2}^s(\bR^d) = \{ f \in \cS'(\bR^d): \|f \|_{B_{p, q, w_2}^s(\bR^d)} < \infty \},
\]
where
\[
 \|f \|_{B_{p, q, w_2}^s(\bR^d)} := \| \Psi * f \|_{L_{p, w_2}(\bR^d)} + \left( \sum_{j=1}^{\infty} \ 2^{jsq} \| \psi_j* f \|_{L_{p, w_2}(\bR^d)}^q \right)^{1/q}.
\]
Here, $\varphi * f$ for $\varphi \in \cS(\bR^d)$ and $f \in \cS'(\bR^d)$ is defined by $\varphi * f(x) = \langle f, \varphi(x - \cdot) \rangle$, where $\langle f, \varphi(x - \cdot) \rangle$ means the action of $f \in \cS'(\bR^d)$ on the test function $\varphi(x - \cdot) \in \cS(\bR^d)$.

We also define weighted Bessel potential space as follows.
\[
H_{p, w_2}^s = H_{p, w_2}^s(\bR^d) =  (1 - \Delta)^{-s/2}L_{p, w_2}(\bR^d),
\]
and $H_{p, w_2}^{0} = L_{p, w_2}$, where $(1 - \Delta)^{s/2}$ is defined by
\[
(1 - \Delta)^{s/2} = \cF^{-1} \left(\left(1 + |\xi|^2 \right)^{s/2} \cF u  \right).
\]
Note that the Fourier transform $\cF f = \hat{f}$ of $f \in \cS'(\bR^d)$ is defined by
\[
\langle \hat{f}, \varphi \rangle = \langle f, \hat{\varphi} \rangle
\]
for $\varphi \in \cS(\bR^d)$.
In particular, if $u \in H_{p, w_2}^{-1}$, there exists $g_i, f \in L_{p, w_2}$, $i = 1, 2, \ldots, d$, such that $u = D_ig_i + f$ in the distribution sense.
\end{definition}

\begin{remark}
\label{rmk_interpol}
The following results on $A_p$-weighted Besov spaces and $A_p$-weighted Bessel potential spaces are introduced in \cite[Theorem 2.8 and Theorem 3.5]{MR0676560}.

(i) For any $\nu, s \in \bR$, $(1 - \Delta)^{\nu/2}$ is an isometry from $B_{p, q, w_2}^s$ and $H_{p, w_2}^s$ to $B_{p, q, w_2}^{s-\nu}$ and $H_{p, w_2}^{s- \nu}$, respectively.

(ii) For any $s \in \bR$, $B_{p, q, w_2}^s$ can be characterized by the real interpolation of $H_{p, w_2}^{s_0}$ and $H_{p, w_2}^{s_1}$;
\[
\left( H_{p, w_2}^{s_0}, H_{p, w_2}^{s_1} \right)_{\theta, q} = B_{p, q, w_2}^s,
\]
where $\theta \in (0, 1)$ and $s = (1-\theta)s_0 + \theta s_1$ with $s_0, s_1 \in \bR$. (The norm equivalence is determined by a constant $ N(d, p, [w_2]_{A_p}) >0$.)
In particular, we have
\begin{equation}
	\label{eq2306080355}
\| \sigma \|_{B_{p, q, w_2}^s}^q \simeq_{d, p, [w_2]_{A_p}}  \inf_{U_0, U_1} \int_0^{\infty} \left( \varepsilon^{-\theta} \left(  \| U_0(\varepsilon, \cdot) \|_{H_{p, w_2}^{s_0}} + \varepsilon  \| U_1(\varepsilon, \cdot) \|_{H_{p, w_2}^{s_2}}  \right) \right)^q \, \frac{\mathrm{d}\varepsilon}{\varepsilon},
\end{equation}
where the infimum is taken for all sufficiently smooth $U_0$ and $U_1$ such that $\sigma(x) = U_0(\varepsilon, x) + U_1(\varepsilon, x)$.

See \cite[Section 1]{MR0503903} for the precise definition and fundamental properties of real interpolations, especially, the $K$-method.
\end{remark}

For the extension theorems of the solution spaces $\bH_{p, q, w}^{k+\alpha, 2}(T)$ and $\cH_{p, q, w}^{k+\alpha, 1}(T)$, $k \in \{0, 1\}$, we consider the initial value problems (\eqref{eq0203_01} and \eqref{eq0203_02} below) for the time-fraction heat equations in non-divergence/divergence form.
Let $P_{\beta} =  P_{\beta}(t, x)$ be the fundamental solution of the time-fractional heat equation $\partial_t^{\beta} - \Delta$, and $\widetilde{P}_{\beta}(t, x) := \int_0^t P_{\beta}(s, x)\,\mathrm{d}s$. The existence of such $P_{\beta}$ is proved in, for example, \cite[Section 6.2]{MR3581300}. 
For sufficiently smooth functions $u_0$ and $u_1$, 
\begin{equation}
\label{eq0131_03}
U(t, \cdot) := P_{\beta}(t, \cdot) \ast u_0(\cdot) + 1_{\beta > 1}\widetilde{P}_{\beta}(t, \cdot) \ast u_1(\cdot)
\end{equation}
is a solution to 
\begin{align}
\label{eq0203_01}
    \begin{cases}
    \partial_t I^{1-\alpha} \left( U - u_0\right) - \Delta U = 0 &\quad \text{in}\quad  {\bR^d_T} \\
    U(0, \cdot) = u_0 (\cdot)&\quad \text{on}\quad  \bR^d
    \end{cases} 
\end{align}
if $\beta = \alpha \in (0, 1)$, and to
\begin{align}
\label{eq0203_02}
    \begin{cases}
    \partial_t^2I^{1 - \alpha} \left( U - u_0 - tu_1 \right) - \Delta U = 0 &\quad \text{in}\quad  {\bR^d_T} \\
    U(0, \cdot) = u_0 (\cdot)&\quad \text{on}\quad  \bR^d\\
    U_t(0, \cdot) = u_1 (\cdot) &\quad \text{on}\quad  \bR^d
    \end{cases} 
\end{align}
if $\beta = 1 + \alpha \in (1, 2)$. In particular, see \cite{MR2047909, MR4186022} for \eqref{eq0203_01} and \cite[Section 1]{MR3240385}, \cite[Appendix C]{MR4464544} for \eqref{eq0203_02}.

\begin{remark}
\label{rmk_mittag}
It is known that the Fourier transform of $P_{\beta}$ with respect to $x$ can be represented by the Mittag-Leffler function $E_\beta(\cdot)$, that is, $\hat{P}_{\beta}(t, \xi) = E_{\beta}(-|\xi|^2t^{\beta})$.
Also, the following are true.

(i) \cite[Lemma 3.2]{MR3581300}:
$P_{\beta}(t, x) =  t^{-\beta d/2} P_{\beta}(1, t^{-\beta/2}x)$ and
\begin{multline}
\label{eq0209_03}
  P_{\beta}(1, x) \le N 1_{|x| \ge 1}\left( e^{-\sigma |x|^{\frac{2}{2-\beta}}}  \right) 
\\
+ N 1_{|x| < 1}  |x|^{-d} \left( |x|^{2} +  |x|^{2} \left| \log{ \left| x \right|} \right|1_{d=2} +  |x|1_{d=1} \right),
\end{multline}
where $N = (d, \beta) > 0$ and $\sigma = \sigma(d, \beta) > 0$.

(ii) Let $c, \delta \in \bR$ be $c < 1 + \beta$ and $\frac{\pi \beta}{2} < \delta \le \pi \wedge \pi \beta$. Then the two parametric Mittag-Leffler function $E_{\beta, c}$ has an integral representation
\begin{equation}
\label{eq0223_01}
E_{\beta, c}(-v) = \frac{1}{\pi \beta} \int_0^{\infty} \frac{r^{\frac{1-c}{\beta}}e^{r^{\frac{1}{\beta}} \cos(\delta / \beta)} \left[ r \sin(\psi - \delta) + v \sin{\psi} \right] }{r^2 + 2rv\cos(\delta) + v^2}\,\mathrm{d}r
\end{equation}
for $v > 0$, where $\psi = \psi(r) = r^{1/\beta} \sin(\frac{\delta}{\beta}) + \delta(1 + \frac{1-c}{\beta})$.
In particular, the above identity \eqref{eq0223_01} can be obtained by taking $\epsilon \searrow 0$ in the formula (14) in \cite{MR1967847}, since $c < 1 + \beta$.

(iii) \cite[Theorem 1.3.5]{MR1249271}: If $\beta = \alpha \in (0, 1)$, $E_{\alpha} = E_{\alpha, 1}$ has an integral representation
\begin{equation}
\label{eq0209_02}
E_{\alpha}(-v) = \frac{\sin{\alpha \pi}}{\pi} \int_0^{\infty} \frac{r^{\alpha - 1}}{r^{2\alpha} + 2r^{\alpha} \cos{\alpha \pi} + 1} e^{-rv^{1/\alpha}}\,\mathrm{d}r
\end{equation}
for $v  > 0$.

(iv) \cite[Lemma 3.1]{MR3980916}: If $\beta = 1 + \alpha \in (1, 2)$,
\begin{equation}
\label{eq0223_02}
\cF(\widetilde{P}_{\beta})(t, \xi) = \int_0^t \hat{P}_{\beta}(s, \xi)\,\mathrm{d}s = tE_{\beta, 2}(-|\xi|^2t^{\beta}). 
\end{equation}
\end{remark}

We now present the initial trace/extension theorems for the solution spaces.
For the remaining part of this paper, if $\Omega = \bR^d$, we omit $\Omega = \bR^d$ and use $T$ instead of $\Omega_T = (0, T) \times \bR^d$ in the notation of function spaces, for example, $\bH_{p, q, w}^{k+\alpha, 2}(T) = \bH_{p, q, w}^{k+\alpha, 2}(\bR^d_T) $ and $L_{p, w_2} = L_{p, w_2}(\bR^d)$.

\subsection{Main results}

\begin{theorem}[Trace theorem]
	\label{thm0111_01}
Let $\alpha \in (0, 1)$, $p, q \in (1, \infty)$, $T \in (0, \infty)$, and $w(t, x) = w_1(t)w_2(x) = t^{\mu}w_2(x)$, where $\mu \in (-1, q-1)$ and $w_2 \in A_p(\bR^d)$.
Also, let $k$ and $n$ be non-negative integers with $n \le k$ and $\theta_n := (k + \alpha -n  - \frac{1 + \mu}{q})/(k + \alpha)$.
If $\alpha > (1+\mu)/q$ (so that $\theta_n \in (0,1)$ for all $n=0,1,\ldots,k$), for each $n \in \{0,1,\ldots,k\}$, the operator
\[
\cT_n: \bH_{p, q, w}^{k+\alpha, 2}(T) \to B_{p, q, w_2}^{2\theta_n}
\]
with $\cT_nu = \partial_t^nu(0, x)$ for $u \in \bH_{p, q, w}^{k+\alpha, 2}(T) \cap C^{\infty}([0, T] \times \bR^d)$ is bounded and satisfies
\begin{equation}
\label{trace0110_1}
\|\cT_n u \|_{B_{p, q, w_2}^{2\theta_n}} \le N \| u \|_{ \bH_{p, q, w}^{k+\alpha, 2}(T)},
\end{equation}
where $N = N(d, \alpha, p, q, [w_2]_{A_p}, \mu, k, n, T) > 0$.
Furthermore, $N$ is uniformly bounded for $T \ge 1$.
\end{theorem}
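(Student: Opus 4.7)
The plan is to invoke the real-interpolation characterization of Remark~\ref{rmk_interpol}(ii) with $s_0=0$, $s_1=2$, and $\theta=\theta_n$, identifying $B^{2\theta_n}_{p,q,w_2}=(L_{p,w_2},H^2_{p,w_2})_{\theta_n,q}$. By Proposition~\ref{prop0102_03} together with the trace controls in Lemma~\ref{lem0525_1} and Remark~\ref{rmk230604_1}, it suffices to prove the estimate for smooth $u\in C^\infty([0,T]\times\bR^d)$ vanishing for large $|x|$ and then pass to the limit. For such $u$, iterating Lemma~\ref{lem0517_2} on $\partial_t^k u-u_k$ followed by $k$-fold time integration produces the Taylor-with-remainder representation (cf.\ Remark~\ref{rmk0102_02})
\[
u(t,x)=\sum_{m=0}^{k}\frac{t^m}{m!}\,u_m(x)+I^{k+\alpha}f(t,x),\quad u_m(x):=\partial_t^m u(0,x),\ \ f:=\partial_t^{k+\alpha}u\in L_{p,q,w},
\]
and the task reduces to producing, for each $\tau>0$, a decomposition $u_n(x)=U_0(\tau,x)+U_1(\tau,x)$ with manageable bounds.

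To isolate the $n$-th Taylor coefficient I introduce the unique polynomial $Q_n$ of degree $k$ on $[0,1]$ satisfying the biorthogonality conditions $\int_0^1 Q_n(s)\,s^m\,\mathrm{d}s=m!\,\delta_{mn}$ for $m=0,1,\ldots,k$, and rescale to $P_n^{[\tau]}(t):=\tau^{-n-1}Q_n(t/\tau)$, which obeys $\int_0^\tau P_n^{[\tau]}(t)\,t^m\,\mathrm{d}t=m!\,\delta_{mn}$ and the crude bound $|P_n^{[\tau]}(t)|\le N_k\tau^{-n-1}$ on $[0,\tau]$. Testing the displayed identity against $P_n^{[\tau]}$ annihilates every Taylor term except the $n$-th and gives, for $\tau\in(0,T]$,
\[
u_n(x)=\underbrace{\int_0^\tau P_n^{[\tau]}(t)\,u(t,x)\,\mathrm{d}t}_{=:\,U_1(\tau,x)}-\underbrace{\int_0^\tau P_n^{[\tau]}(t)\,I^{k+\alpha}f(t,x)\,\mathrm{d}t}_{=:\,-U_0(\tau,x)},
\]
while for $\tau>T$ I simply take $U_0(\tau,\cdot)=u_n$ and $U_1(\tau,\cdot)=0$. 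Minkowski's inequality in $x$ applied pointwise in $t$ yields
\[
\|U_1(\tau,\cdot)\|_{H^2_{p,w_2}}\le N\tau^{-n-1}I^1\!\bigl(\|u\|_{L_{p,w_2}}+\|D^2u\|_{L_{p,w_2}}\bigr)(\tau),\ \ \|U_0(\tau,\cdot)\|_{L_{p,w_2}}\le N\tau^{-n-1}I^{k+\alpha+1}\!\bigl(\|f\|_{L_{p,w_2}}\bigr)(\tau).
\]

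To evaluate the interpolation integral I change variables $\varepsilon=\tau^{k+\alpha}$; using $(k+\alpha)\theta_n=k+\alpha-n-(1+\mu)/q$, the $\varepsilon^{-\theta_n}\|U_0\|$ contribution over $\tau\in(0,T]$ collapses exactly to $\int_0^T|\tau^{-(k+\alpha+1)}I^{k+\alpha+1}F|^q\,\tau^\mu\,\mathrm{d}\tau$ with $F(t):=\|f(t,\cdot)\|_{L_{p,w_2}}$, while the $\varepsilon^{1-\theta_n}\|U_1\|$ contribution collapses to $\int_0^T|\tau^{-1}I^1G|^q\,\tau^\mu\,\mathrm{d}\tau$ with $G(t):=\|u(t,\cdot)\|_{L_{p,w_2}}+\|D^2u(t,\cdot)\|_{L_{p,w_2}}$. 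The latter is bounded by Lemma~\ref{lem0224} with $\alpha=1$; for the former, since Lemma~\ref{lem0224} only covers orders in $(0,1]$, I rescale $s=\tau u$ inside the fractional integral to obtain
\[
\tau^{-(k+\alpha+1)}I^{k+\alpha+1}F(\tau)=\frac{1}{\Gamma(k+\alpha+1)}\int_0^1(1-u)^{k+\alpha}F(\tau u)\,\mathrm{d}u,
\]
then apply Minkowski's integral inequality followed by the scaling substitution in $\tau$ to bound its $L_q(t^\mu\,\mathrm{d}t)$-norm by $\int_0^1(1-u)^{k+\alpha}u^{-(1+\mu)/q}\,\mathrm{d}u\cdot\|F\|_{L_q(t^\mu\,\mathrm{d}t)}$, whose $u$-integral is finite thanks to $\mu<q-1$. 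The tail $\tau>T$ contributes $\|u_n\|_{L_{p,w_2}}^q\int_{T^{k+\alpha}}^\infty\varepsilon^{-\theta_n q-1}\,\mathrm{d}\varepsilon$, where $\theta_n>0$ (equivalent to $\alpha>(1+\mu)/q$) makes it finite and uniformly bounded for $T\ge1$, and $\|u_n\|_{L_{p,w_2}}$ is itself controlled by $\|u\|_{\bH^{k+\alpha,2}_{p,q,w}(T)}$ via Lemma~\ref{lem0525_1} (when $n=k$) or Remark~\ref{rmk230604_1} (when $n<k$). The main obstacle I anticipate is designing the biorthogonal polynomial $Q_n$ so that a \emph{single} decomposition simultaneously captures the full $(k+\alpha+1)$-order smoothing of the fractional remainder in $L_{p,w_2}$ and the $H^2_{p,w_2}$-regularity of the time-averaged $u$---the latter is forced because no spatial regularity of $\partial_t^n u$ itself is available in the solution space.
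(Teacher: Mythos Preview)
Your proposal is correct and takes a genuinely different (and somewhat cleaner) route from the paper's proof.

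\textbf{What the paper does.} The paper builds the decomposition $\partial_t^n u(0,\cdot)=U_{n,0}(\varepsilon,\cdot)+U_{n,1}(\varepsilon,\cdot)$ through a time-mollification $u^{(\varepsilon)}$ with a kernel supported in $(-\varepsilon^{1/\beta},-\tfrac{1}{2}\varepsilon^{1/\beta})$, combined with a path-integral identity for $u(s,x)-u(t,x)$. This produces the representation \eqref{equ0101_01}, in which $U_{n,1}=\partial_t^n u^{(\varepsilon)}(0,\cdot)$ carries the $H^2_{p,w_2}$-regularity while $U_{n,0}$ involves $I^{\alpha}I^{k-n}f$ \emph{plus} the remainder $R_n$ in \eqref{eq230717_2}, which contains the higher traces $\partial_t^m u(0,\cdot)$ for $m>n$. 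Consequently the paper proceeds by downward induction on $n$, first settling $n=k$ (where $R_k=0$) and then feeding \eqref{eq230803_01} back into the estimate for smaller $n$. The analytic input is repeated use of Lemma~\ref{lem0224} together with Lemma~\ref{lem0525_1} and Remark~\ref{rmk230604_1}.

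\textbf{What you do differently.} You bypass the mollification and the induction entirely by testing the Taylor-with-remainder identity $u=\sum_{m=0}^k\frac{t^m}{m!}u_m+I^{k+\alpha}f$ against a biorthogonal polynomial $P_n^{[\tau]}$ that kills every Taylor term except the $n$-th. This yields the decomposition in one stroke for every $n$, with $U_1$ a pure time-average of $u$ (hence controlled in $H^2_{p,w_2}$) and $U_0$ a time-average of the full remainder $I^{k+\alpha}f$. Your handling of the $U_0$-integral via the scaling substitution and Minkowski is a direct proof of the higher-order Hardy inequality that the paper obtains by iterating Lemma~\ref{lem0224}; both are equivalent, but yours avoids the bookkeeping. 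The tail and $L_{p,w_2}$-trace controls (Lemma~\ref{lem0525_1}, Remark~\ref{rmk230604_1}) are used identically in both arguments.

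\textbf{What each buys.} The paper's mollification viewpoint connects directly to the same device used elsewhere (Sobolev embeddings, lateral traces), so it fits a broader toolkit. Your polynomial-moment approach is shorter, avoids the inductive coupling through $R_n$, and makes transparent why the interpolation exponent $\theta_n$ arises: the change of variable $\varepsilon=\tau^{k+\alpha}$ converts the $K$-functional weights exactly into $\tau^{-(k+\alpha+1)}I^{k+\alpha+1}$ and $\tau^{-1}I^1$. A minor cosmetic point: include $\|Du(t,\cdot)\|_{L_{p,w_2}}$ in $G$ (or invoke the weighted Mihlin bound $\|Dv\|_{L_{p,w_2}}\le N(\|v\|_{L_{p,w_2}}+\|D^2v\|_{L_{p,w_2}})$) so that $G$ genuinely dominates $\|u(t,\cdot)\|_{H^2_{p,w_2}}$.
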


\begin{theorem}[Trace theorem]
				\label{cor0111_01}
Let $\alpha \in (0, 1)$, $p, q \in (1, \infty)$, $T \in (0, \infty)$ and $w(t, x) = w_1(t)w_2(x) = t^{\mu}w_2(x)$, where $\mu \in (-1, q-1)$ and $w_2 \in A_p(\bR^d)$.
Also, let $k$ and $n$ be non-negative integers with $n \le k$ and $\theta_n := (k -n + \alpha - \frac{1 + \mu}{q})/(k + \alpha)$.
If $\alpha > (1+\mu)/q$ (so that $\theta_n \in (0,1)$ for all $n=0,1,\ldots,k$), for each $n \in \{0,1,\ldots,k\}$, the operator
\[
\cT_n: \cH_{p, q, w}^{k+\alpha, 1}(T) \to B_{p, q, w_2}^{2\theta_n - 1}
\]
with $\cT_nu = \partial_t^nu(0, x)$ for $u \in \cH_{p, q, w}^{k+\alpha, 1}(T) \cap C^{\infty}([0, T] \times \bR^d)$ is bounded and satisfies
\begin{equation}
\label{trace0110_3}
\|\cT_n u \|_{B_{p, q, w_2}^{2\theta_n-1}} \le N \| u \|_{ \cH_{p, q, w}^{k+\alpha, 1}(T)},
\end{equation}
where $N = N(d, \alpha, p, q, [w_2]_{A_p}, \mu, k,n, T) > 0$.
Furthermore, $N$ is uniformly bounded for $T \ge 1$.
\end{theorem}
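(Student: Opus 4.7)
The plan is to reduce the divergence-case trace theorem to the already stated non-divergence case (Theorem \ref{thm0111_01}) by lifting via the Bessel-potential operator $(1-\Delta)^{-1/2}$, which according to Remark \ref{rmk_interpol}(i) is an isometric isomorphism $B_{p,q,w_2}^{s} \to B_{p,q,w_2}^{s+1}$ (and analogously for $H_{p,w_2}^{s}$) and acts only on the spatial variable, so it commutes with $\partial_t$ and with $I^{1-\alpha}$. The target regularity index shift of exactly $-1$ between the two trace theorems ($2\theta_n$ vs.\ $2\theta_n-1$) matches the gain of one spatial derivative by $(1-\Delta)^{-1/2}$, making this the natural route.

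First, for $u \in \cH_{p,q,w}^{k+\alpha,1}(T) \cap C^\infty([0,T] \times \bR^d)$, I would set $v := (1-\Delta)^{-1/2}u$ and verify the lifting
\[
v \in \bH_{p,q,w}^{k+\alpha,2}(T), \qquad \|v\|_{\bH_{p,q,w}^{k+\alpha,2}(T)} \le N\|u\|_{\cH_{p,q,w}^{k+\alpha,1}(T)}.
\]
Checking the components of Definition \ref{def0102_01}: since $u,Du \in L_{p,q,w}$ (equivalently, $u$ is $H_{p,w_2}^1$-valued in the time direction), the spatial mapping property of $(1-\Delta)^{-1/2}$ gives $v, Dv, D^2 v \in L_{p,q,w}$; for $0 \le m < k$, $\partial_t^m u \in \bH_{p,q,w}^{-1}(T)$ implies $\partial_t^m v = (1-\Delta)^{-1/2}\partial_t^m u \in L_{p,q,w}$; and $\partial_t^k u \in \cH_{p,q,w}^{\alpha,-1}(T)$ together with commutation gives $\partial_t^k v \in \bH_{p,q,w}^{\alpha,0}(T)$ with $\partial_t^{k+\alpha} v = (1-\Delta)^{-1/2}\partial_t^{k+\alpha}u \in L_{p,q,w}$. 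The mapping properties of $(1-\Delta)^{\pm 1/2}$ on weighted mixed-norm spaces follow from $A_p$-weighted Fourier multiplier theory (cf.\ \cite{MR0676560}).

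Next, applying Theorem \ref{thm0111_01} to $v$ yields $\cT_n v \in B_{p,q,w_2}^{2\theta_n}$ with
\[
\|\cT_n v\|_{B_{p,q,w_2}^{2\theta_n}} \le N \|v\|_{\bH_{p,q,w}^{k+\alpha,2}(T)} \le N \|u\|_{\cH_{p,q,w}^{k+\alpha,1}(T)},
\]
where $N$ is uniform for $T \ge 1$. Because $(1-\Delta)^{-1/2}$ acts only in $x$, it commutes with the trace operator: $\cT_n v = (1-\Delta)^{-1/2}\cT_n u$. Invoking the isometry from Remark \ref{rmk_interpol}(i),
\[
\|\cT_n u\|_{B_{p,q,w_2}^{2\theta_n-1}} = \|(1-\Delta)^{-1/2}\cT_n u\|_{B_{p,q,w_2}^{2\theta_n}} = \|\cT_n v\|_{B_{p,q,w_2}^{2\theta_n}} \le N \|u\|_{\cH_{p,q,w}^{k+\alpha,1}(T)},
\]
which proves \eqref{trace0110_3} on the smooth intersection. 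The extension to all of $\cH_{p,q,w}^{k+\alpha,1}(T)$ follows from the density result in Proposition \ref{prop0102_03}.

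The main obstacle will be the rigorous justification that $(1-\Delta)^{-1/2}$ maps the abstract distribution space $\bH_{p,q,w}^{-1}(T)$ (defined in Section \ref{subsec_div} by duality and the representation $u = D_i G_i + F$) into $L_{p,q,w}(T)$, and that this map commutes with $\partial_t$ and $I^{1-\alpha}$. This can be handled by applying $(1-\Delta)^{-1/2}$ directly to the representers $F$ and $G_i$ (which are $L_{p,q,w}$ functions), noting that $D_i(1-\Delta)^{-1/2}$ and $(1-\Delta)^{-1/2}$ are bounded on $L_{p,q,w}$ by weighted Mihlin-type multiplier bounds; one then verifies that $(1-\Delta)^{-1/2}u$ coincides with the distribution obtained by pairing against $(1-\Delta)^{-1/2}\varphi$ in the definition \eqref{eq0512_01}. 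A parallel verification shows that $(1-\Delta)^{-1/2}I^{1-\alpha}u = I^{1-\alpha}(1-\Delta)^{-1/2}u$ at the level of \eqref{eq0512_05}, so that $\partial_t^\alpha$ transfers cleanly. Once the lifting is established, the theorem, including uniform control of $N$ in $T$ for $T \ge 1$, follows immediately from Theorem \ref{thm0111_01}.
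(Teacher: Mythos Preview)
Your proposal is correct and follows essentially the same approach as the paper: reduce to smooth functions via Proposition \ref{prop0102_03}, set $v = (1-\Delta)^{-1/2}u$, verify $v \in \bH_{p,q,w}^{k+\alpha,2}(T)$, apply Theorem \ref{thm0111_01}, and translate back using the isometry of $(1-\Delta)^{-1/2}$ on the weighted Besov scale from Remark \ref{rmk_interpol}. The paper's proof is slightly more terse about the commutation and lifting details you spell out, but the strategy is identical.
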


\begin{remark}
See Theorem \ref{thm230416} (and Remark \ref{rmk230416}) for the above trace results with general $\Omega$, not just $\Omega = \bR^d$. 
\end{remark}

\begin{theorem}[Extension theorem: sub-diffusion case]
\label{thm_subdiff}
Let $\alpha \in (0, 1)$, $p, q \in (1, \infty)$, $T \in (0, \infty)$ and $w(t, x) = w_1(t)w_2(x) = t^{\mu}w_2(x)$, where $\mu \in (-1, q-1)$ and $w_2 \in A_p(\bR^d)$.
Also, let 
\[
\theta = 1 - \frac{1 + \mu}{q\alpha} \in (0, 1), \quad \left( \alpha > \frac{1+\mu}{q} \right).
\]
Then we have the following:
\begin{enumerate}[(i)]

\item For any $u_0 \in B_{p, q, w_2}^{2\theta}$, \eqref{eq0203_01} has a unique solution $U \in \bH_{p, q, w}^{\alpha, 2}(T)$  satisfying
\begin{equation}
\label{eq0202_03}
\| U \|_{\bH_{p, q, w}^{\alpha, 2}(T)} \le N \| u_0 \|_{B_{p, q, w_2}^{2\theta}},
\end{equation}
where $N = N(d, \alpha, p, q, [w_2]_{A_p}, \mu, T) > 0$.

\item For any $v_0 \in B_{p, q, w_2}^{2\theta-1}$, \eqref{eq0203_01} has a unique solution  $V \in \cH_{p, q, w}^{\alpha, 1}(T)$ satisfying
\begin{equation}
\label{eq0224_01}
\| V \|_{\cH_{p, q, w}^{\alpha, 1}(T)} \le N \| v_0 \|_{B_{p, q, w_2}^{2\theta-1}},
\end{equation}
where $N = N(d, \alpha, p, q, [w_2]_{A_p}, \mu, T) > 0$.
\end{enumerate}
In particular, $U(0, \cdot) = u_0$ and $V(0, \cdot) = v_0$ are well-defined in the trace sense by Theorem \ref{thm0111_01}, since $\alpha > (1+\mu)/q$. 
\end{theorem}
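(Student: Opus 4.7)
The plan is to define $U(t,\cdot) := P_\alpha(t,\cdot) * u_0$ for part (i) and analogously for part (ii), and then convert the Besov norm on the initial datum into an $L_{p,q,w}$-bound on spatial derivatives of $U$ via the real-interpolation identity \eqref{eq2306080355}. Because the equation forces $\partial_t^\alpha U = \Delta U$ in $L_{p,q,w}(T)$, and because Lemma \ref{lem0517_2} controls $\|U\|_{L_{p,q,w}(T)}$ by $T^\alpha\|\partial_t^\alpha U\|_{L_{p,q,w}(T)} + T^{(1+\mu)/q}\|u_0\|_{L_{p,w_2}}$ (the latter being dominated by $\|u_0\|_{B_{p,q,w_2}^{2\theta}}$ through the elementary Besov embedding $B_{p,q,w_2}^{2\theta}\hookrightarrow L_{p,w_2}$ since $\theta>0$), the heart of \eqref{eq0202_03} reduces to proving $\|D^2U\|_{L_{p,q,w}(T)} \leq N\|u_0\|_{B_{p,q,w_2}^{2\theta}}$, with $\|DU\|_{L_{p,q,w}(T)}$ then obtained by spatial interpolation between $U$ and $D^2U$.

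For the central estimate, I would use two Fourier-multiplier bounds, both uniform in $t$. Since $E_\alpha(-v)$ and $vE_\alpha(-v)$ are bounded on $[0,\infty)$ (from the integral representation \eqref{eq0209_02} and the standard decay $E_\alpha(-v)\sim 1/(\Gamma(1-\alpha)v)$ at infinity), the Mikhlin multiplier theorem in $A_p$-weighted $L_p$ yields that convolution with $P_\alpha(t,\cdot)$ is bounded on $L_{p,w_2}$ and that $\|\Delta (P_\alpha(t,\cdot)* f)\|_{L_{p,w_2}} \leq N t^{-\alpha}\|f\|_{L_{p,w_2}}$, with constants independent of $t$. For each $t\in(0,T)$ and any decomposition $u_0 = U_0(t^\alpha,\cdot) + U_1(t^\alpha,\cdot)$ with $U_1(t^\alpha,\cdot)\in H_{p,w_2}^2$, split $\Delta U(t,\cdot) = \Delta(P_\alpha(t,\cdot)* U_0(t^\alpha,\cdot)) + P_\alpha(t,\cdot)* \Delta U_1(t^\alpha,\cdot)$ to obtain
\begin{equation*}
\|\Delta U(t,\cdot)\|_{L_{p,w_2}} \leq N\bigl( t^{-\alpha}\|U_0(t^\alpha,\cdot)\|_{L_{p,w_2}} + \|U_1(t^\alpha,\cdot)\|_{H_{p,w_2}^2}\bigr).
\end{equation*}
Raising to the $q$-th power, integrating against $t^\mu\,\mathrm{d}t$ over $(0,T)$, and changing variables $\varepsilon = t^\alpha$ produces exactly the integrand in the $K$-functional characterization of $(L_{p,w_2}, H_{p,w_2}^2)_{\theta,q}$ with the required exponent $\theta = 1-(1+\mu)/(q\alpha)$. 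Taking the infimum over all admissible decompositions and applying \eqref{eq2306080355} (with $s_0=0$, $s_1=2$) gives $\|\Delta U\|_{L_{p,q,w}(T)} \leq N \|u_0\|_{B_{p,q,w_2}^{2\theta}}$.

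The initial condition $U(0,\cdot) = u_0$ in the trace sense of Theorem \ref{thm0111_01} follows from $P_\alpha(t,\cdot)$ being an approximate identity in $L_{p,w_2}$ as $t\searrow 0$; the verification of \eqref{eq0203_01} is a direct Fourier-multiplier calculation using $\widehat{P_\alpha}(t,\xi)=E_\alpha(-|\xi|^2 t^\alpha)$. Uniqueness reduces to the fact that the zero initial value problem $\partial_t^\alpha W - \Delta W = 0$ admits only the trivial solution in $\bH_{p,q,w,0}^{\alpha,2}(T)$, which is available from the $L_{p,q,w}$-solvability theory of \cite{MR4186022}. Part (ii) can be handled by exactly the same scheme applied to $v_0 = D_ig_i+f\in H_{p,w_2}^{-1}$ with $V = D_i(P_\alpha* g_i) + P_\alpha* f$ and one fewer derivative in the $K$-functional estimate, or more efficiently by reducing to (i) through the Fourier multiplier $(1-\Delta)^{1/2}$, which, by Remark \ref{rmk_interpol}(i), intertwines $B_{p,q,w_2}^{2\theta}$ with $B_{p,q,w_2}^{2\theta-1}$ and $\bH_{p,q,w}^{\alpha,2}(T)$ with $\cH_{p,q,w}^{\alpha,1}(T)$. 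The main technical obstacle is the uniform-in-$t$ Mikhlin bound for the symbol $\xi_i\xi_j E_\alpha(-|\xi|^2 t^\alpha)$ on $L_{p,w_2}$: after the rescaling $\xi\mapsto t^{-\alpha/2}\xi$, it suffices to show that $v\mapsto vE_\alpha(-v)$ together with its derivatives of all orders satisfy the required bounds on $[0,\infty)$, which one reads off from the integral representations collected in Remark \ref{rmk_mittag}.
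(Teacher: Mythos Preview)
Your approach is correct and takes a genuinely different route from the paper. The paper establishes the key estimate $\|P_\alpha * f\|_{L_{p,q,w}(T)} \le N\|f\|_{B_{p,q,w_2}^{-2(1+\mu)/(q\alpha)}}$ (Proposition \ref{prop0201}) by a Littlewood--Paley decomposition: it proves the dyadic bound $\|(P_\alpha*\psi_j)*f(t,\cdot)\|_{L_{p,w_2}} \le N(2^{-2\kappa j/\alpha}t^{-\kappa}\wedge 1)\|f\|_{L_{p,w_2}}$ for any $\kappa<\alpha$ (Lemma \ref{lem0201}), sums in $j$ via H\"older, and then applies the result to $(1-\Delta)u_0$. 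Your argument bypasses the frequency localization entirely and feeds the \emph{endpoint} multiplier bound $\|\Delta(P_\alpha(t,\cdot)*f)\|_{L_{p,w_2}}\le N t^{-\alpha}\|f\|_{L_{p,w_2}}$ directly into the $K$-functional description \eqref{eq2306080355}; after the change of variables $\varepsilon=t^\alpha$ the exponent matches $\theta$ exactly.

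What each buys: your route is shorter and more transparent, aligning with the same real-interpolation viewpoint the paper uses for the trace theorem. Its cost is that you must justify the sharp Mikhlin bound for $\xi\mapsto |\xi|^2 E_\alpha(-|\xi|^2 t^\alpha)$ uniformly in $t$, i.e.\ that $v^k\frac{d^k}{dv^k}\bigl(vE_\alpha(-v)\bigr)$ is bounded on $[0,\infty)$ for $k\le d$; this follows from the asymptotic $E_\alpha(-v)\sim \sum_{n\ge 1}\frac{(-1)^{n-1}}{\Gamma(1-n\alpha)}v^{-n}$ together with smoothness near $0$, but requires a few lines of care (the paper's Lemma \ref{lem0201} deliberately stops at $\kappa<\alpha$ to avoid this endpoint). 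The paper's Littlewood--Paley route trades that endpoint analysis for a summation argument in Proposition \ref{prop0201}. For part (ii) you and the paper agree: reduce to (i) via $(1-\Delta)^{-1/2}$ using Remark \ref{rmk_interpol}(i).
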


\begin{theorem}[Extension theorem: super-diffusion case]
\label{thm_supdiff}
Let $\alpha \in (0, 1)$, $p, q \in (1, \infty)$, $T \in (0, \infty)$ and $w(t, x) = w_1(t)w_2(x) = t^{\mu}w_2(x)$, where $\mu \in (-1, q-1)$ and $w_2 \in A_p(\bR^d)$.
Also, let 
$$
\theta_0 = 1 - \frac{1+\mu}{q(1+\alpha)} \in (0, 1) \quad \textrm{and} \quad \theta_1 = 1 - \frac{1}{1+\alpha} - \frac{1+\mu}{q(1+\alpha)} \in (0, 1)
$$
($\alpha > (1+\mu)/q$). Then we have the following:
\begin{enumerate}[(i)]

\item For any $u_0 \in B_{p, q, w_2}^{2\theta_0}$ and $u_1 \in B_{p, q, w_2}^{2\theta_1}$, the equation \eqref{eq0203_02} has a solution $U \in \bH_{p, q, w}^{1 + \alpha, 2}(T)$  satisfying
\begin{equation*}
%\label{eq0202_03_4}
\| U \|_{\bH_{p, q, w}^{1 + \alpha, 2}(T)} \le N \left( \| u_0 \|_{B_{p, q, w_2}^{2\theta_0}} +  \| u_1 \|_{B_{p, q, w_2}^{2\theta_1}} \right),
\end{equation*}
where $N = N(d, \alpha, p, q, [w_2]_{A_p}, \mu, T) > 0$.

\item For any $v_0 \in B_{p, q, w_2}^{2\theta_0-1}$ and $v_1 \in B_{p, q, w_2}^{2\theta_1 -1}$, the equation \eqref{eq0203_02} has a solution  $V \in \cH_{p, q, w}^{1 + \alpha, 1}(T)$ satisfying
\begin{equation*}
\| V \|_{\cH_{p, q, w}^{1 + \alpha, 1}(T)} \le N \left( \| v_0 \|_{B_{p, q, w_2}^{2\theta_0-1}} + \| v_1 \|_{B_{p, q, w_2}^{2\theta_1-1}}\right),
\end{equation*}
where $N = N(d, \alpha, p, q, [w_2]_{A_p}, \mu, T) > 0$.
\end{enumerate}
In particular, $U(0, \cdot) = u_0$, $U_t(0, \cdot) = u_1$ and $V(0, \cdot) = v_0$, $V_t(0, \cdot) = v_1$ are well-defined in the trace sense by Theorem \ref{thm0111_01}, since $\alpha > (1+\mu)/q$.
\end{theorem}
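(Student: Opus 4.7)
My plan is to construct the solutions explicitly via the representation \eqref{eq0131_03},
\[
U(t, \cdot) = P_{1+\alpha}(t,\cdot) \ast u_0(\cdot) + \widetilde{P}_{1+\alpha}(t,\cdot) \ast u_1(\cdot),
\]
and analogously for $V$, then verify membership in $\bH_{p,q,w}^{1+\alpha,2}(T)$ (resp.\ $\cH_{p,q,w}^{1+\alpha,1}(T)$) with the stated estimates. The scheme parallels the sub-diffusion case (Theorem \ref{thm_subdiff}), the new feature being the secondary kernel $\widetilde{P}_{1+\alpha}$ needed to accommodate the initial velocity $u_1$.

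The first step is to establish the basic convolution bounds on $L_{p,w_2}$:
\[
\|P_{1+\alpha}(t,\cdot) \ast f\|_{L_{p,w_2}} \lesssim \|f\|_{L_{p,w_2}}, \qquad
\|D^2 P_{1+\alpha}(t,\cdot) \ast f\|_{L_{p,w_2}} \lesssim t^{-(1+\alpha)} \|f\|_{L_{p,w_2}},
\]
\[
\|\widetilde{P}_{1+\alpha}(t,\cdot) \ast f\|_{L_{p,w_2}} \lesssim t \|f\|_{L_{p,w_2}}, \qquad
\|D^2 \widetilde{P}_{1+\alpha}(t,\cdot) \ast f\|_{L_{p,w_2}} \lesssim t^{-\alpha}\|f\|_{L_{p,w_2}},
\]
via the scaling identity $P_{1+\alpha}(t,x) = t^{-(1+\alpha)d/2} P_{1+\alpha}(1, t^{-(1+\alpha)/2}x)$, the integral representations \eqref{eq0223_01}, \eqref{eq0223_02}, and the weighted Mikhlin--H\"ormander multiplier theorem applied to the Fourier symbols $E_{1+\alpha,c}(-|\xi|^2 t^{1+\alpha})$ and $|\xi|^2 t^{1+\alpha} E_{1+\alpha,c}(-|\xi|^2 t^{1+\alpha})$.

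Next, I would use the real interpolation characterization \eqref{eq2306080355} of $B_{p,q,w_2}^{2\theta_0}$ between $L_{p,w_2}$ and $H_{p,w_2}^2$ to decompose $u_0 = U_0(\varepsilon) + U_1(\varepsilon)$; with the natural scaling $\varepsilon = t^{1+\alpha}$, combining the two $P_{1+\alpha}$-bounds yields
\[
\|D^2 P_{1+\alpha}(t,\cdot) \ast u_0\|_{L_{p,w_2}} \lesssim \varepsilon^{-1}\bigl(\|U_0(\varepsilon)\|_{L_{p,w_2}} + \varepsilon \|U_1(\varepsilon)\|_{H_{p,w_2}^2}\bigr).
\]
Raising to the $q$-th power and integrating against $t^\mu\,\mathrm{d}t$ (then changing variable to $\varepsilon$) reproduces exactly the K-functional whose infimum equals $\|u_0\|_{B_{p,q,w_2}^{2\theta_0}}^q$, the identity $-q + (\mu-\alpha)/(1+\alpha) = -\theta_0 q - 1$ being precisely the definition of $\theta_0$. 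The analogous computation for $\widetilde{P}_{1+\alpha}$ (with the same scaling) matches $\theta_1$ for the $u_1$-contribution, yielding $\|D^2 U\|_{L_{p,q,w}} \lesssim \|u_0\|_{B^{2\theta_0}_{p,q,w_2}} + \|u_1\|_{B^{2\theta_1}_{p,q,w_2}}$. The bound on $\|U\|_{L_{p,q,w}}$ is easier and uses the embedding $B^{s}_{p,q,w_2} \hookrightarrow L_{p,w_2}$ for $s>0$. For $\|\partial_t U\|_{L_{p,q,w}}$, I would integrate the equation formally to obtain $U - u_0 - t u_1 = I^{1+\alpha}(\Delta U)$, whence $\partial_t U = u_1 + I^\alpha(\Delta U)$, and Lemma \ref{lem0224} (Hardy) converts the $\Delta U$-bound into the desired estimate; this simultaneously establishes $\partial_t^{1+\alpha} U = \Delta U$ in $L_{p,q,w}$, yielding \eqref{eq0202_03}. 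Part (ii) proceeds identically after replacing $L_{p,w_2}$ by $H^{-1}_{p,w_2}$ and using the $H^{-1}$-counterparts of the kernel bounds, which follow from the $L_{p,w_2}$-case via the decompositions $v_i = f_i + D_j g_{i,j}$ furnished by Definition \ref{def_besov}. Uniqueness follows from the injectivity of the Mittag--Leffler symbols on the Fourier side applied to the homogeneous problem with zero data.

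The main obstacle is the first step: establishing $L_{p,w_2}$-boundedness of the convolution operators when $\beta = 1 + \alpha \in (1,2)$. Unlike the sub-diffusion regime, $P_{1+\alpha}$ is sign-changing and no Mainardi-type subordination representation is available, so sharp weighted bounds must be extracted directly from the integral representation \eqref{eq0223_01}, requiring careful control of the symbol and all its required $\xi$-derivatives uniformly in the rescaling parameter.
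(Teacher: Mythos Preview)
Your approach is correct and takes a genuinely different route from the paper's. The paper proves Theorem \ref{thm_supdiff} exactly as it proves Theorem \ref{thm_subdiff}, namely via a Littlewood--Paley decomposition of the data: Lemma \ref{lem0220} gives frequency-localized bounds $\|(P_\beta \ast \psi_j)\ast f(t,\cdot)\|_{L_{p,w_2}} \lesssim (2^{-2\kappa j/\beta}t^{-\kappa}\wedge 1)\|f\|_{L_{p,w_2}}$ (and the analogue for $\widetilde{P}_\beta$), and then the argument of Proposition \ref{prop0201} sums over $j$ to produce $\|P_\beta\ast f\|_{L_{p,q,w}(T)} \lesssim \|f\|_{B^{-2(1+\mu)/(q\beta)}_{p,q,w_2}}$, which one applies to $(1-\Delta)u_0$ and $(1-\Delta)u_1$. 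Part (ii) is then obtained by conjugating with $(1-\Delta)^{1/2}$ rather than by reworking the kernel bounds in $H^{-1}_{p,w_2}$ as you propose.

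Your route bypasses the Littlewood--Paley summation entirely: you establish only the two endpoint operator bounds on $L_{p,w_2}$ (for $P_\beta$ and $D^2 P_\beta$, and similarly for $\widetilde{P}_\beta$), then invoke the continuous $K$-functional characterization \eqref{eq2306080355} with the scaling $\varepsilon = t^{1+\alpha}$. This is cleaner conceptually and mirrors the trace proof, but it requires the full Mikhlin condition on the unlocalized symbols $E_\beta(-|\xi|^2)$ and $|\xi|^2 E_\beta(-|\xi|^2)$ uniformly in the scaling parameter, whereas the paper only ever needs the localized symbols $\hat{\psi}_j E_\beta(-|\xi|^2 t^\beta)$, for which the support restriction makes the derivative estimates slightly more forgiving (one gains a factor $2^{-j|\gamma|}$ for free). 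Both routes ultimately rest on the same integral representation \eqref{eq0223_01} with $\cos(\delta/\beta)<0$, which you correctly identify as the crux. One small point: you do not explicitly address $\|DU\|_{L_{p,q,w}}$, but this follows immediately from your bounds on $\|U\|$ and $\|D^2U\|$ via the weighted Calder\'on--Zygmund equivalence $\|(1-\Delta)f\|_{L_{p,w_2}} \simeq \|f\|_{H^2_{p,w_2}}$ for $w_2\in A_p$.
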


Note that Theorem \ref{thm_subdiff} (respectively, Theorem \ref{thm_supdiff}) provides the optimality of the trace estimates for the solution spaces $\bH_{p, q, w}^{\alpha, 2}(T)$ and $\cH_{p, q, w}^{\alpha, 1}(T)$ (respectively, $\bH_{p, q, w}^{1 + \alpha, 2}(T)$ and $\cH_{p, q, w}^{1+ \alpha, 1}(T)$) in Theorems \ref{thm0111_01} and \ref{cor0111_01}.

\begin{remark}
	\label{rmk_230615}

\mbox{}
	
(i)
An unweighted version (i.e., $w = 1$) of Theorem \ref{thm_subdiff} (and Lemma \ref{lem0201}, Proposition \ref{prop0201}) can be found in \cite{MR4242958}. However, one of their assumptions must be restricted.
More precisely, the authors of \cite{MR4242958} claim that their results hold for all $\alpha \in (0, 1)$, but in fact, it is only possible for, in their setting, $\alpha \in (1/q, 1)$.
See (ii) below for more discussion about this.
On the other hand, with respect to the spatial variables, \cite{MR4242958} deals with more general operators than $\Delta$, so-called $\phi(\Delta)$.

(ii) One may try to obtain \eqref{eq0202_03} and \eqref{eq0224_01} for $\alpha \in (0, (1+\mu)/q]$. However, such estimates cannot be established if $\alpha \in (0, (1+\mu)/q)$.
Indeed, suppose that \eqref{eq0202_03} holds for $\alpha \in (0, (1+\mu)/q)$. If $u_0 \in C_0^{\infty}(\bR^d)$, then $U(t, \cdot) = P_{\alpha}(t, \cdot) \ast u_0(\cdot)$ is a classical solution, that is, sufficiently smooth solution.
Then for any $u_0 \in C_0^{\infty}(\bR^d)$, we must have
$$
\|u_0 \|_{L_{p, w_2}} \le N \| U \|_{\bH_{p, q, w}^{\alpha, 2}(T)} \le N  \| u_0 \|_{B_{p, q, w_2}^{2\theta}},
$$
 where $N$ is independent of $u_0$ and the first inequality is due to Lemma \ref{lem0517_2}.
This gives a contradiction since $\theta < 0$ for $\alpha \in (0, (1+\mu)/q)$. On the other hand, if $\alpha = (1+\mu)/q$, i.e., $\theta = 0$, the situation is more delicate. It is because, for example, the inequality $\|u_0 \|_{L_p}  \le N  \| u_0 \|_{B_{p, q}^{0}}$ may or may not hold depending on the relationship between the numbers $p$, $q$, and $2$.

(iii) In \cite{MR4186022} and \cite{MR4464544}, the authors proved the solvability of the homogeneous equations \eqref{eq0203_01} and \eqref{eq0203_02} for given initial values. They assumed $u_0 \in \cX_0$ (and $u_1 \in \cX_1$ for the $\beta \in (1, 2)$ case), but in view of maximal regularity theory, $\cX_0$ and $\cX_1$ are not appropriate initial spaces.
For example, if $q\alpha = 2(1+\mu)$, the initial value $u_0$ considered in \cite{MR4186022} is $u_0 \in \cX_0 = B_{p, q, w_2}^{1 + \varepsilon} \subsetneq B_{p, q, w_2}^{1}$ ($\varepsilon > 0$), 
but by Theorem \ref{thm_subdiff} it is possible to take functions from $B_{p,q,w_2}^1$ as initial values.
Furthermore, when it comes to the non-trivial weighted case, i.e., $w_2 \neq 1$, 
it can be observed that, in general, the set $\cX_0$ is strictly included in $B_{p, q, w_2}^{2 - \frac{2(1+\mu)}{q \alpha}}$ even when $q\alpha \neq 2(1+\mu)$ (in fact, $\cX_0$ is too small), which means that the solvability results with non-zero initial conditions in  \cite{MR4186022} and \cite{MR4464544} are not sufficiently general. 
In light of this, our Theorems \ref{thm_subdiff} and \ref{thm_supdiff} address and bridge this gap.
\end{remark}

\section{Proof of Trace and Extension theorem}
\label{sec_trace}

\subsection{Proof of Theorems \ref{thm0111_01} and \ref{cor0111_01}}
In this subsection, we prove the (initial) trace embedding for $\bH_{p, q, w}^{k+\alpha, 2}(\Omega_T)$ and $\cH_{p, q, w}^{k+\alpha, 1}(\Omega_T)$.
We fix a non-negative integer $k$ and $\alpha \in (0, 1)$. Recall that $w(t, x) = w_1(t)w_2(x) = t^{\mu}w_2(x)$ where $\mu \in (-1, q-1)$ and $w_2 \in A_p(\bR^d)$.
 
To prove Theorem \ref{thm0111_01}, we present an integral representation of $u \in C_0^{\infty}([0, \infty) \times \Omega)$ satisfying
\begin{equation}
	\label{eq230717_1}
\partial_t^{k+\alpha}u(t, x) = \partial_t^{k+1}I^{1-\alpha}\left( u(t, x) - \sum_{m=0}^k \frac{t^m}{m!}\partial_t^mu(0, x)\right)= f(t, x)
\end{equation}
(see Remark \ref{rmk0102_02}).
Take a non-negative $\eta = \eta(t) \in C^{\infty}(\bR)$ such that $\operatorname{supp}\eta \subset (-1, -1/2)$ and $\|\eta\|_{L_1(\bR)} = 1$.
For $\varepsilon > 0$, set $ \eta_{\varepsilon}(t) := \varepsilon^{-1/\beta} \eta(t\varepsilon^{-1/\beta})$ where $\beta := k + \alpha$.
Then for $(t, x) \in [0, \infty) \times \Omega$ and
\[
u^{(\varepsilon)}(t,x) := \int_{\bR_+ } \eta_\varepsilon(t-s) u(s,x) \, \mathrm{d}s,
\]
we write
$$
u(t, x) = u^{(\varepsilon)}(t, x) - \left( u^{(\varepsilon)}(t, x) - u(t, x) \right)
$$
$$
 = u^{(\varepsilon)}(t, x) - \int_{\bR_+} \left( u(s, x) - u(t, x)\right) \eta_{\varepsilon}(t-s)\,\mathrm{d}s
=: u^{(\varepsilon)}(t, x) - v(t, x; \varepsilon).
$$
Let $\gamma(\lambda)$ be a path from $t \in \bR_+ $ to $s \in \bR_+ $ defined by
$$
\gamma(\lambda) = (1-\lambda^{1/\beta})t + \lambda^{1/\beta} s, \quad \lambda \in [0,1].
$$
Since
$$
u(s, x) - u(t, x) = \int_0^1 (\nabla_{t, x} u)(\gamma(\lambda)) \cdot  \gamma'(\lambda) \, \mathrm{d}\lambda,
$$
we see that
\[
u(s, x) - u(t, x) = u\left(\gamma(1), x\right) - u\left( \gamma (0), x \right)  =  \int_0^1 u_t \left( \gamma \left( \lambda \right), x\right) \gamma'(\lambda)\,\mathrm{d}\lambda
\]
with
\[
\gamma'(\lambda) = \frac{\mathrm{d}}{\mathrm{d}\lambda}\left( \lambda^{1/\beta}(s-t) + t \right) = \frac{1}{\beta}\lambda^{1/\beta - 1}(s-t).
\]
Then,
\[
v(t, x) = v(t, x;\varepsilon) 
= \frac{1}{\beta}\int_{\bR_+}\int_0^1 u_s(\gamma(\lambda), x) \lambda^{ 1/\beta - 1} (s-t) \, \mathrm{d}\lambda \,   \eta_{\varepsilon}(t-s)\,\mathrm{d}s
\]
\[
 = \frac{1}{\beta} \int_0^{\varepsilon}  \lambda^{-1} \int_{\bR_+ } u_s(s, x) \,  \zeta \left( \frac{t-s}{\lambda^{1/\beta}} \right)\,\mathrm{d}s \, \mathrm{d}\lambda,
\]
where $\zeta(t) = -t \eta(t)$ and the last equality is due to the change of variables $\gamma(\lambda) \to s$, and then $\varepsilon \lambda \to \lambda$.
For a non-negative integer $n$ such that $n \le k$, by taking $n$-times differentiation with respect to $t$ and by integration by parts with the fact that $\operatorname{supp} \zeta \subset (-1, -1/2)$, we have
\begin{equation*}
%\label{eq0313_01}
\partial_t^n u (t, x) = \partial_t^n u^{(\varepsilon)}(t, x) -
\frac{1}{\beta} \int_0^{\varepsilon} \lambda^{-1}\int_{\bR_+ }   \partial_s^{n+1}u(s, x) \,  \zeta \left( \frac{t-s}{\lambda^{1/\beta}} \right)\,\mathrm{d}s \, \mathrm{d}\lambda
\end{equation*}
for $(t, x) \in [0, \infty) \times \Omega$.
Using the fact that $I^{1-\alpha} \partial_t g = \partial_t I^{1-\alpha} g$ for a sufficiently smooth $g$ with $g(0,x) = 0$ and \eqref{eq230717_1}, we have
\[
 \int_{\bR_+ } \partial_s^{n+1}u(s, y) \,  \zeta \left( \frac{t-s}{\lambda^{1/\beta}} \right)\,\mathrm{d}s = \int_{t+\frac{1}{2}\lambda^{1/\beta}}^{t+ \lambda^{1/\beta}} \partial_s I^{\alpha} I^{1-\alpha} \partial_s^{n+1}  u(s, x)  \zeta \left( \frac{t-s}{\lambda^{1/\beta}} \right)\,\mathrm{d}s
\]
\[
=\lambda^{-1/\beta} \int_{t + \frac{1}{2}\lambda^{1/\beta}}^{t + \lambda^{1/\beta}} I^{\alpha}\partial_s^{n+1}I^{1-\alpha} \left( u(s, x) - \sum_{m=0}^n \frac{s^m}{m!}\partial_s^m u (0, x) \right)   \zeta' \left( \frac{t-s}{\lambda^{1/\beta}} \right)\,\mathrm{d}s
\]
\[
= \lambda^{-1/\beta} \int_{t + \frac{1}{2}\lambda^{1/\beta}}^{t + \lambda^{1/\beta}}
\left( I^{\alpha}I^{k-n}f \left(s, x\right) + R_{n}\left(s, x\right) \right)
  \zeta' \left( \frac{t-s}{\lambda^{1/\beta}} \right)\,\mathrm{d}s, \quad (I^0f := f)
\]
where $R_{n}(s, x) = 0$ for $n = k$ and
\begin{equation}
	\label{eq230717_2}
R_{n}(s, x) = \sum_{m = n+1}^k N(m, n) s^{m-n} \partial_t^mu(0, x)
\end{equation}
for $0 \le n < k$.
Therefore, by evaluating $t = 0$ in the above, we have the following representation of $\partial_t^n u(0, x)$:
\begin{multline}
\label{equ0101_01}
\partial_t^n u(0, x) = \partial_t^n u^{(\varepsilon)}(0, x)
\\
- \frac{1}{\beta} \int_0^{\varepsilon}  \lambda^{-1-1/\beta} \int_{ \frac{1}{2}\lambda^{1/\beta}}^{ \lambda^{1/\beta}}  \left( I^{\alpha}I^{k-n}f \left(s, x\right) + R_{n}\left(s, x\right) \right)  \zeta' \left( \frac{-s}{\lambda^{1/\beta}} \right) \,\mathrm{d}s \, \mathrm{d}\lambda.
\end{multline}
Moreover, \eqref{equ0101_01} also holds for $u \in C_0^{\infty}([0, T] \times \Omega)$ by extending $u$ to $\{t > T\} \times \Omega$ properly and following the above argument with $\varepsilon < T^{\beta}$.

\begin{proof}[\textbf{Proof of Theorem \ref{thm0111_01}}]
By Proposition \ref{prop0102_03}, it is enough to prove \eqref{trace0110_1} for $u \in C_0^{\infty}([0, T] \times \bR^d)$ with $\partial_t^{k+\alpha}u = f$.
If we have $\partial_t^n u(0, \cdot) = U_{n, 0}(\varepsilon, x) + U_{n, 1}(\varepsilon, x)$ with sufficiently smooth $U_{n, 0}$ and $U_{n, 1}$, then by \eqref{eq2306080355} in Remark \ref{rmk_interpol} with $\sigma(x) = \partial_t^n u(0, \cdot)$, $s_0 = 0$ and $s_1 = 2$, it holds that
\[
\| \partial_t^n u(0, \cdot) \|_{B_{p, q, w_2}^{2\theta_n}} \le N \left( \int_0^{\infty} \left( \varepsilon^{-\theta_n} \| U_{n, 0}(\varepsilon, \cdot) \|_{L_{p, w_2}} \right)^q \, \frac{\mathrm{d}\varepsilon}{\varepsilon} \right)^{1/q}
\]
\[
+ N \left( \int_0^{\infty} \left( \varepsilon^{1-\theta_n} \| U_{n, 1}(\varepsilon, \cdot) \|_{H_{p, w_2}^2} \right)^q \, \frac{\mathrm{d}\varepsilon}{\varepsilon} \right)^{1/q} := N(A_n^{1/q} + B_n^{1/q}),
\]
where $N = N(d, p, [w_2]_{A_p}) > 0$.
It means that to obtain the trace estimate \eqref{trace0110_1}, it suffices to find such a proper decomposition $\partial_t^n u(0, x) = U_{n, 0}(\varepsilon, x) + U_{n, 1}(\varepsilon, x)$, and then, obtain estimates for $A_n$ and $B_n$.
We first prove the case where $n = k$.

\textit{$\bullet$ Step 1 -- Decomposition of $\partial_t^ku(0, x)$}:
For $x \in \bR^d$, we define 
\begin{multline*}
%							\label{eq230608_01}
U_{k, 0}(\varepsilon, x) :=
\\
\left\{
\begin{array}{lll}
\displaystyle - \frac{1}{\beta} \int_0^{\varepsilon}  \lambda^{-1-1/\beta} \int_{ \frac{1}{2}\lambda^{1/\beta}}^{ \lambda^{1/\beta}} I^{\alpha}f \left(s, x\right)    \zeta' \left( \frac{-\cdot}{\lambda^{1/\beta}} \right) \,\mathrm{d}s \, \mathrm{d}\lambda \, &\displaystyle\textrm{if} \,\,\, \varepsilon < T^{\beta},
\\
\displaystyle \partial_t^k u(0, x) \quad &\displaystyle\textrm{if} \,\,\, \varepsilon \ge T^{\beta}, 
\end{array}
\right.
\end{multline*}
and
\begin{equation*}
%							\label{eq230608_02}
U_{k, 1}(\varepsilon, x) :=							
 \left\{
\begin{array}{lll}
\displaystyle \partial_t^k u^{(\varepsilon)}(0, x) \quad &\displaystyle\textrm{if} \quad \varepsilon < T^{\beta},
\\
\displaystyle 0 \quad &\displaystyle\textrm{if} \quad \varepsilon \ge T^{\beta}. 
\end{array}
\right.
\end{equation*}
Then by \eqref{equ0101_01}, we obtain a decomposition of $\partial_t^nu(0, x)$, that is,
\begin{equation}
\label{eq0129_01}
\partial_t^k u(0, x) = U_{k, 0}(\varepsilon, x) + U_{k, 1}(\varepsilon, x)
\end{equation}
for any $(\varepsilon, x) \in (0, \infty) \times \bR^d$.

\textit{$\bullet$ Step 2 -- Estimates of $A_k$ and $B_k$ ($n = k$)}: Let
\[
A_k = \int_0^{\infty} \varepsilon^{-1-q\theta_k} \| U_{k, 0}(\varepsilon, \cdot) \|_{L_{p, w_2}}^q \,\mathrm{d}\varepsilon 
= \int_0^{ T^{\beta}} \ldots + \int_{ T^{\beta}}^{\infty} \ldots := A_{k, 1} + A_{k, 2}.
\]
Since $U_0(\varepsilon, x) = \partial_t^ku(0, x)$ for $\varepsilon \ge  T^{\beta}$, it is clear that
\[
A_{k, 2} \le N(q, \theta) T^{- \beta q \theta_k} \| \partial_t^ku(0, \cdot) \|_{L_{p, w_2}}^q.
\]
 Note that for $\varepsilon <  T^{\beta}$, by Minkowski's inequality and Fubini's theorem, we have
\[
 \| U_{k, 0}(\varepsilon, \cdot) \|_{L_{p, w_2}} \le N \int_0^{\varepsilon}  \lambda^{-1-1/\beta} \int_{\frac{1}{2}\lambda^{1/\beta}}^{\lambda^{1/\beta}} \| I^{\alpha}f(s, \cdot)\|_{L_{p, w_2}}\,\mathrm{d}s \, \mathrm{d}\lambda
\]
\[
\le N \int_0^{\varepsilon^{1/\beta}} \left( \int_{s^{\beta}}^{(2s)^{\beta}} \lambda^{-1-1/\beta}\,\mathrm{d}\lambda \right)\| I^{\alpha}f(s, \cdot)\|_{L_{p, w_2}}\,\mathrm{d}s
\]
\[
\le N(\beta) \int_0^{\varepsilon^{1/\beta}} s^{-1}  \| I^{\alpha}f(s, \cdot)\|_{L_{p, w_2}}\,\mathrm{d}s.
\]
Then by the change of variable $\varepsilon^{1/\beta} \to \varepsilon$,
\[
A_{k, 1} \le N\int_0^{T^{\beta}} \varepsilon^{-1-q\theta_k} \left(\int_0^{\varepsilon^{1/\beta}} s^{-1} \| I^{\alpha}f(s, \cdot)\|_{L_{p, w_2}}  \,\mathrm{d}s \right)^q \,\mathrm{d}\varepsilon
\]
\[
= N \int_0^{T} \varepsilon^{-1 -\beta q \theta_k} \left(\int_0^{\varepsilon} s^{-1} \| I^{\alpha}f(s, \cdot)\|_{L_{p, w_2}}  \,\mathrm{d}s \right)^q \,\mathrm{d}\varepsilon
\]
\[
= N \int_0^{T} \varepsilon^{-q} \left(\int_0^{\varepsilon} s^{-1} \| I^{\alpha}f(s, \cdot)\|_{L_{p, w_2}}  \,\mathrm{d}s \right)^q \varepsilon^{q -1 -\beta q \theta_k}  \,\mathrm{d}\varepsilon
\]
\[
\le N \int_0^{T} \varepsilon^{-1 -\beta q \theta_k } \| I^{\alpha}f(\varepsilon, \cdot)\|_{L_{p, w_2}}^q \,\mathrm{d}\varepsilon,
\]
where the last inequality is due to \eqref{eq0224_03} of Lemma \ref{lem0224} ($\alpha = 1$ in there) with $q -1 -\beta q \theta < q-1$.
Since $-1 - \beta q \theta_k = -q\alpha + \mu$, we have
\[
A_{k, 1} \le N(\alpha, q, \mu, k)\int_0^{T} \varepsilon^{ - q \alpha + \mu} \| I^{\alpha}f(\varepsilon, \cdot)\|_{L_{p, w_2}}^q \,\mathrm{d}\varepsilon.
\]
Then by Minkowski's inequality and \eqref{eq0224_03} again, the above integral is bounded by a constant times
\[
 \int_0^{T} \| f(s, \cdot) \|_{L_{p, w_2}}^q s^{\mu}\,\mathrm{d}s,
\]
and hence,
\[
A_k^{1/q} \le N \| f \|_{L_{p, q, w}(T)} + NT^{-\theta_k(k + \alpha)} \| \partial_t^n u(0, \cdot) \|_{L_{p, w_2}},
\]
where $N = N(\alpha, q, \mu, k) > 0$.
For $B_k$, note that
\[
B_k = \int_0^{\infty} \varepsilon^{-1 + q(1-\theta_k)} \| U_{k, 1}(\varepsilon, \cdot) \|_{H_{p, w_2}^2}^q \,\mathrm{d}\varepsilon = \int_0^{T^{\beta}} \varepsilon^{-1 + q(1-\theta_k)} \| U_{k, 1}(\varepsilon, \cdot) \|_{H_{p, w_2}^2}^q \,\mathrm{d}\varepsilon.
\]
Since
\[
\partial_t^k u^{(\varepsilon)}(0, x) = \varepsilon^{-(k+1)/\beta} \int_{\frac{1}{2}\varepsilon^{1/\beta}}^{\varepsilon^{1/\beta}} u(s, x) \partial_t^k\eta \left(\frac{-s}{\varepsilon^{1/\beta}} \right)\,\mathrm{d}s,
\]
we have
\[
\| U_1(\varepsilon, \cdot) \|_{H_{p, w_2}^2} \le N(k) \varepsilon^{-(k+1)/\beta} \int_{\frac{1}{2}\varepsilon^{1/\beta}}^{\varepsilon^{1/\beta}} \| u(s, \cdot) \|_{H_{p, w_2}^2}\,\mathrm{d}s,
\]
and by the change of variable $\varepsilon^{1/\beta} \to \varepsilon$ and \eqref{eq0224_03},
\[
B_k \le \int_0^{T^{\beta}} \varepsilon^{-1 + q(1-\theta_k)} \| U_1(\varepsilon, \cdot) \|_{H_{p, w_2}^2}^q \,\mathrm{d}\varepsilon \le N \int_0^{T} \varepsilon^{-q + \mu} \left( \int_{\frac{1}{2}\varepsilon}^{\varepsilon}  \| u(s, \cdot) \|_{H_{p, w_2}^2}\,\mathrm{d}s \right)^{q} \,\mathrm{d}\varepsilon
\]
\[
\le N \int_0^{T} \varepsilon^{ \mu}   \| u(\varepsilon, \cdot) \|_{H_{p, w_2}^2}^q \,\mathrm{d}\varepsilon = N \| |u| + |Du| + |D^2u|  \|_{L_{p, q, w}(T)}^q,
\]
where $N = N(\alpha, q, \mu, k) > 0$.
By combining the estimates for $A_k$ and $B_k$, we have
\begin{multline*}
% 	\label{eq2306281606}
\| \partial_t^k u(0, \cdot) \|_{B_{p, q, w_2}^{2\theta_k}} \le N \left\|  \left|f\right| +  \left|u \right| + \left| Du \right| + \left| D^2u \right| \right\|_{L_{p, q, w}(T)} 
\\
+ NT^{-\theta_k(k + \alpha)} \| \partial_t^k u(0, \cdot) \|_{L_{p, w_2}},
\end{multline*}
where we call that $f = \partial_t^{k+\alpha}u$.
Since $\partial_t^ku \in \bH_{p, q, w}^{\alpha, 0}(T)$, by applying \eqref{eq0513_03} in Lemma \ref{lem0525_1} to $\partial_t^ku(0, \cdot)$, 
\begin{multline}
	\label{eq2306281606}
\| \partial_t^k u(0, \cdot) \|_{B_{p, q, w_2}^{2\theta_k}} \le  N \| |f| +  |u| + |Du| + |D^2u| \|_{L_{p, q, w}(T)}
\\ + NT^{-\alpha}\| \partial_t^ku \|_{L_{p, q, w}(T)},
\end{multline}
where $N = N(d, \alpha, p, q, [w_2]_{A_p}, \mu, k) > 0$ and the last term in \eqref{eq2306281606} vanishes as $T \to \infty$.

\textit{$\bullet$ Step 3 -- Remaining case ($0 \le n < k$)}: 
Recall that
\[
R_n(s, x) = \sum_{m = n+1}^k N(m, n) s^{m-n} \partial_t^mu(0, x)
\]
and note that
\begin{equation}
	\label{eq2308010237}
\int_0^{\varepsilon}  \lambda^{-1-1/\beta} \int_{ \frac{1}{2}\lambda^{1/\beta}}^{ \lambda^{1/\beta}} R_n(s, x)     \zeta' \left( \frac{-s}{\lambda^{1/\beta}} \right) \,\mathrm{d}s  \mathrm{d}\lambda = \sum_{m=n+1}^k N(\beta, m, n)\varepsilon^{(m-n)/\beta} \partial_t^mu(0, x).
\end{equation}
To employ an induction argument, assume that, for any $m \in \{n+1, n+2, \ldots, k\}$, we have $U_{m, 0}(\varepsilon, x)$ and $U_{m, 1}(\varepsilon, x)$ such that $\partial_t^mu(0, x) = U_{m, 0}(\varepsilon, x) + U_{m, 1}(\varepsilon, x)$ and
\begin{multline}
	\label{eq230803_01}
 \int_0^{\infty} \left( \varepsilon^{-\theta_m} \| U_{m, 0}(\varepsilon, \cdot) \|_{L_{p, w_2}} \right)^q \, \frac{\mathrm{d}\varepsilon}{\varepsilon} 
+  \int_0^{\infty} \left( \varepsilon^{1-\theta_m} \| U_{m, 1}(\varepsilon, \cdot) \|_{H_{p, w_2}^2} \right)^q \, \frac{\mathrm{d}\varepsilon}{\varepsilon} 
\\
\le N  \| u \|_{ \bH_{p, q, w}^{k+\alpha, 2}(T)}^q,
\end{multline}
where $N = N(d, \alpha, p, q, [w_2]_{A_p}, \mu, k, m, T) \to N(d, \alpha, p, q, [w_2]_{A_p}, \mu, k, m)$ as $T \to \infty$.
By the proof above (\textit{Step 1} and \textit{Step 2}) we know that \eqref{eq230803_01} holds for $n = k$.
We now prove \eqref{eq230803_01} for $n < k$.
For $\varepsilon < T^{\beta}$, we set
\[
U_{n, 0}(\varepsilon, x) := - \frac{1}{\beta} \int_0^{\varepsilon}  \lambda^{-1-1/\beta} \int_{ \frac{1}{2}\lambda^{1/\beta}}^{ \lambda^{1/\beta}} I^{k-n}I^{\alpha}f(s, x)    \zeta' \left( \frac{-s}{\lambda^{1/\beta}} \right) \,\mathrm{d}s \, \mathrm{d}\lambda
\]
\[
 + \sum_{m=n+1}^k N(\beta, m, n) \varepsilon^{(m-n)/\beta}U_{m, 0}(\varepsilon, x)
\]
and
\[
U_{n, 1}(\varepsilon, x) := \partial_t^n u^{(\varepsilon)}(0, x) + \sum_{m=n+1}^k N(\beta, m, n)\varepsilon^{(m-n)/\beta}U_{m, 1}(\varepsilon, x),
\]
where $N(\beta, m, n)$ is the same as in \eqref{eq2308010237}, and note that $I^{k-n} I^\alpha = I^\alpha I^{k-n}$ and $U_{m,0}(\varepsilon,x) + U_{m,1}(\varepsilon,x) = \partial_t^m u(0,x)$ by the induction assumption for $m \geq n+1$.
For $\varepsilon \ge T^{\beta}$, we set $U_{n, 0}(\varepsilon, x) := \partial_t^nu(0, x)$ and $U_{n, 1}(\varepsilon, x) = 0$.
Then by \eqref{equ0101_01} and \eqref{eq2308010237}, we have a decomposition
\[
\partial_t^n u(0, x) = U_{n, 0}(\varepsilon, x) + U_{n, 1}(\varepsilon, x).
\]
By following \textit{Step 1} and \textit{Step 2}, we obtain
\begin{equation}
	\label{eq2306281607}
\| \partial_t^n u(0, \cdot) \|_{B_{p, q, w_2}^{2\theta_n}} \le  N \| u \|_{\bH_{p, q, w}^{k+\alpha, 2}(T)},
\end{equation}
where $N = N(d, \alpha, p, q, [w_2]_{A_p}, \mu, k, n, T) \to N(d, \alpha, p, q, [w_2]_{A_p}, \mu, k, n)$ as $T \to \infty$.
In particular, by the same calculation as in \textit{Step 2}, we reach
\[
 \int_0^{T^{\beta}} \left( \varepsilon^{-\theta_n} \| U_{n, 0}(\varepsilon, \cdot) \|_{L_{p, w_2}} \right)^q \, \frac{\mathrm{d}\varepsilon}{\varepsilon} \le N \int_0^T \varepsilon^{-1-\beta q \theta_n} \left( I^{k-n}I^{\alpha}\|f(\varepsilon, \cdot)\|_{L_{p, w_2}} \right)^q \,\mathrm{d}\varepsilon
\]
\[
+ N \sum_{m = n+1}^k  \int_0^{\infty}  \varepsilon^{-q\theta_n - 1}  \varepsilon^{q(m-n)/\beta}\| U_{m, 0}(\varepsilon, \cdot) \|_{L_{p, w_2}}^q \, \mathrm{d}\varepsilon.
\]
By applying Lemma \ref{lem0224} (to $I^{\alpha}\|f(\varepsilon, \cdot)\|_{L_{p, w_2}}$) $(k-n)$-times and using a relation $-\theta_n + (m-n)/\beta = -\theta_m$, the right-hand side of the above inequality is bounded by a constant times
\[
\int_0^T \varepsilon^{-q\alpha + \mu} \left( I^{\alpha}\|f(\varepsilon, \cdot)\|_{L_{p, w_2}} \right)^q \,\mathrm{d}\varepsilon +  \sum_{m = n+1}^k  \int_0^{\infty}  \varepsilon^{-q\theta_m - 1} \| U_{m, 0}(\varepsilon, \cdot) \|_{L_{p, w_2}}^q \, \mathrm{d}\varepsilon,
\]
\[
\le  N\| u \|_{\bH_{p, q, w}^{k+\alpha, 2}(T)}^q,
\]
where the last inequality is due to Lemma \ref{lem0224} and the induction assumption \eqref{eq230803_01} for $m=n+1,\ldots,k$.
Similarly, we obtain
\[
\int_0^{T^{\beta}} \varepsilon^{-1 + q(1-\theta_n)} \| U_{n, 1}(\varepsilon, \cdot) \|_{H_{p, w_2}^2}^q \,\mathrm{d}\varepsilon \le   N\| u \|_{\bH_{p, q, w}^{k+\alpha, 2}(T)}^q.
\]
Finally, by \eqref{eq230604_1} in Remark \ref{rmk230604_1} we have
\[
T^{(1+\mu)/q}\| \partial_t^n u(0, \cdot) \|_{L_{p, w_2}} \le N(q, \mu) \left( \|  \partial_t^n u \|_{L_{p, q, w}(T)} + T\|  \partial_t^{n+1} u \|_{L_{p, q, w}(T)}  \right),
\]
and then, 
\[
 \int_{T^{\beta}}^{\infty} \varepsilon^{-1-q\theta_n} \| U_{n, 0}(\varepsilon, \cdot) \|_{L_{p, w_2}}^q \,\mathrm{d}\varepsilon +  \int_{T^{\beta}}^{\infty} \varepsilon^{-1-q(1-\theta_n)} \| U_{n, 1}(\varepsilon, \cdot) \|_{H_{p, w_2}^2}^q \,\mathrm{d}\varepsilon
\]
\[
\le N\| u \|_{\bH_{p, q, w}^{k+\alpha, 2}(T)}^q,
\]
since $U_{n, 1}(\varepsilon, x) = 0$ for $\varepsilon \ge T^{\beta}$.
The theorem is proved.
\end{proof}

\begin{proof}[\textbf{Proof of Theorem \ref{cor0111_01}}]
By Proposition \ref{prop0102_03}, it is enough to prove \eqref{trace0110_3} for $u \in C_0^{\infty}([0, T] \times \bR^d)$.
For a given sufficiently smooth $u \in \cH_{p, q, w}^{k+\alpha, 1}(T)  $, by direct computation one verifies that $v:= (1 - \Delta)^{-1/2}u \in  \bH_{p, q, w}^{k+\alpha, 2}(T)$. 
Then by applying Theorem \ref{thm0111_01} to $v$, we have \eqref{trace0110_1} with $v$ in place of $u$.
Since $(1 - \Delta)^{-1/2}$ is an isometry from $B_{p, q, w_2}^{2\theta-1}$ to $B_{p, q, w_2}^{2\theta}$ (Remark \ref{rmk_interpol}), it holds that for $n = 0, 1, \ldots, k$,
\[
\|\cT_n v \|_{B_{p, q, w_2}^{2\theta}} = \| (1 - \Delta)^{-1/2}\cT_n u \|_{B_{p, q, w_2}^{2\theta}} = \|\cT_n u \|_{B_{p, q, w_2}^{2\theta - 1}}.
\]
Similarly, it also holds that $\| \partial_t^k v \|_{\bH_{p, q, w}^{\alpha, 0}(T)} =  \|  \partial_t^k u \|_{\cH_{p, q, w}^{\alpha, -1}(T)}$, $\|\partial_t^n v \|_{L_{p, q, w}(T)} = \|  \partial_t^n u \|_{\bH_{p, q, w}^{-1}(T)} $ for $n < k$, and $\| |v| + |Dv| + |D^2v| \|_{L_{p, q, w}(T)} = \| |u| + |Du| \|_{L_{p, q, w}(T)}$.
From the estimate \eqref{trace0110_1} for $v \in \bH_{p, q, w}^{k+\alpha, 2}(T)$ with the above relations, we obtain \eqref{trace0110_3} for $u \in \cH_{p, q, w}^{k+\alpha, 1}(T)$.
The theorem is proved.
\end{proof}

\begin{remark}
		\label{rmk230416}
In Theorems \ref{thm0111_01} and \ref{cor0111_01}, we prove that the initial traces only when the spatial domain $\Omega$ is $\bR^d$.
This is because the characterization of $B_{p, q, w_2}^s$ as an interpolation of $H_{p, w_2}^{s'}$, and the isomorphism of the operator $(1 - \Delta)^{\nu/2}$ on $B_{p, q, w_2}^s$ (and on $H_{p, w_2}^{s'}$) are clear when $\Omega = \bR^d$.
However, by following the proof of Theorem \ref{thm0111_01} line by line, one can consider $(X_0, X_1)$ instead of $(L_{p, w_2}(\bR^d), H_{p, w_2}^2(\bR^d))$ where $X_0$ and $X_1$ are Banach spaces which are continuously embedded in the same Hausdorff topological vector space $Z$.
For instance, if we take $(X_0, X_1) = (L_{p, w_2}(\Omega), H_{p, w_2}^2(\Omega))$ and $(X_0, X_1) = (H_{p, w_2}^{-1}(\Omega), H_{p, w_2}^1(\Omega))$ for general $\Omega \subset \bR^d$, we have Theorem \ref{thm230416} below.
One may also take homogeneous Bessel potential spaces for $X_0$ and $X_1$, for which the initial trace spaces are also homogeneous Besov spaces.
\end{remark}

Note that in the following theorem, if $w=1$ and $\partial\Omega$ is sufficiently smooth, the initial trace spaces are well-known Besov spaces (see \cite[p.204]{MR0781540}):
\[
\left( L_{p, w_2} \left( \Omega \right), H_{p, w_2}^2 \left( \Omega \right) \right)_{\theta_n, q} = B_{p, q}^{2\theta_n}(\Omega), \quad \left( H_{p, w_2}^{-1} \left( \Omega \right), H_{p, w_2}^1 \left( \Omega \right) \right)_{\theta_n, q} = B_{p, q}^{2\theta_n - 1}(\Omega).
\]
\begin{theorem}
		\label{thm230416}		
Let $\Omega$ be a domain in $\bR^d$, $\alpha \in (0, 1)$, $p, q \in (1, \infty)$, and $w(t, x) = w_1(t)w_2(x) = t^{\mu}w_2(x)$, where $\mu \in (-1, q-1)$ and $w_2 \in A_p(\bR^d)$.
Also, let $k$ and $n$ be non-negative integers with $n \le k$ and $\theta_n:= (k+\alpha-n-\frac{1+\mu}{q})/(k+\alpha)$.
If $\alpha > (1+\mu)/q$ (so that $\theta_n \in (0,1)$ for all $n=0,1,\ldots,k$), for each $n \in \{0,1,\ldots,k\}$,
 we have the following:
\begin{enumerate}[(i)]

\item The operator $\cT_n: \bH_{p, q, w}^{k+\alpha, 2}(\Omega_T) \to \left( L_{p, w_2} \left( \Omega \right), H_{p, w_2}^2 \left( \Omega \right) \right)_{\theta_n, q}$ with $\cT_nu = \partial_t^nu(0, x)$ for $u \in \bH_{p, q, w}^{k+\alpha, 2}(\Omega_T) \cap C^{\infty}([0, T] \times \Omega)$ is bounded and satisfies
\begin{equation}
	\label{eq2306281612}
\|\cT_n u \|_{\left( L_{p, w_2} \left( \Omega \right), H_{p, w_2}^2 \left( \Omega \right) \right)_{\theta_n, q}} \le N \| u \|_{ \bH_{p, q, w}^{k+\alpha, 2}(\Omega_T)},
\end{equation}
where $N = N(\alpha, q, \mu, k, n, T) > 0$.

\item The operator $\cT_n: \cH_{p, q, w}^{k+\alpha, 1}(\Omega_T) \to \left( H_{p, w_2}^{-1} \left( \Omega \right), H_{p, w_2}^1 \left( \Omega \right) \right)_{\theta_n, q}$ with $\cT_nu = \partial_t^nu(0, x)$ for $u \in \cH_{p, q, w}^{k+\alpha, 1}(\Omega_T) \cap C^{\infty}([0, T] \times \Omega)$ is bounded and satisfies
\begin{equation}
	\label{eq2306281613}
\|\cT_n u \|_{\left( H_{p, w_2}^{-1} \left( \Omega \right), H_{p, w_2}^1 \left( \Omega \right) \right)_{\theta_n, q}} \le N \| u \|_{ \cH_{p, q, w}^{k+\alpha, 1}(\Omega_T)},
\end{equation}
where $N = N(\alpha, q, \mu, k, n, T) > 0$.
\end{enumerate}
Moreover, $N$ in \eqref{eq2306281612} and \eqref{eq2306281613} are uniformly bounded for $T \ge 1$.
\end{theorem}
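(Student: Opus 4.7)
The plan is to follow the proofs of Theorems \ref{thm0111_01} and \ref{cor0111_01} essentially verbatim, making only two changes: (a) replace the Besov-space characterization \eqref{eq2306080355} with the $K$-functional definition of the real interpolation space, which is available for any pair $(X_0,X_1)$ of compatible Banach spaces, and (b) read all spatial norms as norms over $\Omega$ rather than over $\bR^d$. By the density result in Proposition \ref{prop0102_03}, it suffices to prove \eqref{eq2306281612} and \eqref{eq2306281613} for $u \in C^\infty([0,T]\times\Omega) \cap \bH_{p,q,w}^{k+\alpha,2}(\Omega_T)$ (respectively $\cap\,\cH_{p,q,w}^{k+\alpha,1}(\Omega_T)$).

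For part (i), I take the same temporal mollification and carry out exactly the same construction leading to the representation \eqref{equ0101_01} and the decomposition
\[
\partial_t^n u(0,x) = U_{n,0}(\varepsilon,x) + U_{n,1}(\varepsilon,x).
\]
Since this construction involves only the temporal mollifier $\eta_\varepsilon$ and the operators $I^\alpha, I^{k-n}$ acting in $t$, it produces functions of $x \in \Omega$ without any reference to the geometry of $\partial\Omega$. For the pair $(X_0,X_1)=(L_{p,w_2}(\Omega),H_{p,w_2}^2(\Omega))$, the $K$-method gives
\[
\|\partial_t^n u(0,\cdot)\|_{(X_0,X_1)_{\theta_n,q}}^q \le N\int_0^\infty\!\Bigl(\varepsilon^{-\theta_n}\|U_{n,0}(\varepsilon,\cdot)\|_{X_0} + \varepsilon^{1-\theta_n}\|U_{n,1}(\varepsilon,\cdot)\|_{X_1}\Bigr)^q\frac{\mathrm{d}\varepsilon}{\varepsilon},
\]
which is exactly $N(A_n^{1/q}+B_n^{1/q})$ from Step 2 of the proof of Theorem \ref{thm0111_01}. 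The bounds on $A_n$ and $B_n$ depend only on Lemma \ref{lem0224}, the estimate \eqref{eq0513_03}, and the Minkowski inequality applied to the spatial $L_{p,w_2}$ and $H_{p,w_2}^2$ norms; none of these steps use the structure of $\bR^d$ beyond the ability to integrate over $\Omega$ in $x$. The inductive step for $n<k$ uses \eqref{eq230604_1} of Remark \ref{rmk230604_1} to bound $\|\partial_t^n u(0,\cdot)\|_{L_{p,w_2}(\Omega)}$, which is stated for general $\Omega$.

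For part (ii), I use the same decomposition with $(X_0,X_1)=(H_{p,w_2}^{-1}(\Omega),H_{p,w_2}^1(\Omega))$. Since $u$ is smooth, all the expressions $I^{k-n}I^\alpha f$ and $\partial_t^n u^{(\varepsilon)}(0,\cdot)$ are well-defined classical functions of $x$, and one simply re-interprets the Minkowski-inequality steps: the spatial $H_{p,w_2}^{-1}(\Omega)$ norm of $U_{n,0}$ is bounded by combining the $L_{p,w_2}(\Omega)$ norms of the components $g_i,\tilde f \in L_{p,q,w}(\Omega_T)$ appearing in $\partial_t^{k+\alpha}u = D_i g_i + \tilde f$ (using the embedding $L_{p,w_2}(\Omega)\hookrightarrow H_{p,w_2}^{-1}(\Omega)$), while the $H_{p,w_2}^1(\Omega)$ norm of $U_{n,1}$ is bounded by $\|u(s,\cdot)\|_{L_{p,w_2}(\Omega)}+\|Du(s,\cdot)\|_{L_{p,w_2}(\Omega)}$, consistent with the norm $\|u\|_{\cH_{p,q,w}^{k+\alpha,1}(\Omega_T)}$. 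The inductive argument uses \eqref{eq230606_2} and Remark \ref{rmk230604_2} in place of their non-divergence analogs. The uniformity of $N$ for $T\ge 1$ follows from the same truncation $\varepsilon<T^\beta$ versus $\varepsilon\ge T^\beta$ as in Theorem \ref{thm0111_01}.

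The main obstacle, such as it is, is purely bookkeeping: one must verify that every spatial norm appearing in the estimation of $A_n$ and $B_n$ in the proof of Theorem \ref{thm0111_01} remains meaningful when $\bR^d$ is replaced by $\Omega$, and that for part (ii) the $H^{-1}_{p,w_2}(\Omega)$-norm of temporally mollified/integrated versions of $\partial_t^{k+\alpha}u$ are controlled by the $L_{p,q,w}(\Omega_T)$-norms of their $\bH^{-1}_{p,q,w}$-representatives. No new analytic ingredient is required, and there is no need for a reduction via $(1-\Delta)^{-1/2}$ as in the proof of Theorem \ref{cor0111_01}, which was tied to $\Omega=\bR^d$.
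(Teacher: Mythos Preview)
Your proposal is correct and follows essentially the same approach as the paper. The paper treats this theorem by reference: Remark~\ref{rmk230416} explains that the proof of Theorem~\ref{thm0111_01} goes through verbatim with $(L_{p,w_2}(\bR^d),H_{p,w_2}^2(\bR^d))$ replaced by any compatible couple $(X_0,X_1)$, and the paragraph following the theorem spells out for part~(ii) exactly the point you identify---that one writes $\partial_t^{k+\alpha}u = D_i g_i + f$ and pushes the spatial divergence through the purely temporal integrals defining $U_{n,0}$, so that $\|U_{n,0}(\varepsilon,\cdot)\|_{H_{p,w_2}^{-1}(\Omega)}$ is controlled by the $L_{p,w_2}(\Omega)$ norms of $I^\alpha I^{k-n}g_i$ and $I^\alpha I^{k-n}f$, with Lemma~\ref{lem1201_1} replacing Lemma~\ref{lem0525_1} for the tail term. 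Your observation that the $(1-\Delta)^{-1/2}$ reduction from the proof of Theorem~\ref{cor0111_01} is unavailable on a general $\Omega$, and must be bypassed by this direct argument, matches the paper's handling exactly.
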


Indeed, to obtain ($ii$), we need to estimate $\|U_0(\varepsilon, \cdot) \|_{H_{p, w_2}^{-1}(\Omega)}$, which follows from \eqref{equ0101_01}, \eqref{eq0129_01}, and
\[
-(k+\alpha) U_0(\varepsilon, x) = \int_0^{\varepsilon}  \lambda^{-1-1/\beta} \int_{\frac{1}{2}\lambda^{1/\beta}}^{\lambda^{1/\beta}} I^{\alpha}f(s, x) \,  I^{k-n}\left[ \zeta' \left( \frac{-\cdot}{\lambda^{1/\beta}} \right) \right]\,\mathrm{d}s \, \mathrm{d}\lambda
\]
\[
- D_i \int_0^{\varepsilon}  \lambda^{-1-1/\beta} \int_{\frac{1}{2}\lambda^{1/\beta}}^{\lambda^{1/\beta}} I^{\alpha}g_i(s, x) \, I^{\alpha}f(s, x) \,  I^{k-n}\left[ \zeta' \left( \frac{-\cdot}{\lambda^{1/\beta}} \right) \right]\,\mathrm{d}s \, \mathrm{d}\lambda,
\]
where $\partial_t^{k+\alpha}u = D_ig_i + f$ for some $g_i, f \in L_{p, q, w}(\Omega_T)$, $i=1, \ldots, d$.
To estimate $\|\partial_t^n u(0, \cdot) \|_{H_{p, w_2}^{-1}(\Omega)}$, we use Lemma \ref{lem1201_1}.

\begin{remark}
	\label{rmk_unweighted}
If $k = 0$ and there is no spatial weight, i.e., $w_2(x) = 1$, one can obtain the trace estimates \eqref{trace0110_1} and \eqref{trace0110_3} by calculating the well-known integral representations of the $B_{p, q}^{\nu}$-norm for $\nu > 0$.
Indeed, since there is no weight with respect to the $x$ variables (i.e., translation invariant with respect to $x$), one can use the following:
\begin{equation}
\label{eq2306291250}
\| f \|_{B_{p, q}^{\nu}} \simeq_{d, p} \|f\|_{L_p} + \left( \int_{\bR^d} |h|^{-\nu q} \| \Delta_h^2 f \|_{L_p}^q\,\frac{\mathrm{d}h}{|h|^d} \right)^{1/q}\quad (0 < \nu < 2),
\end{equation}
where $\Delta_h f (x)= f(x + h) - f(x) $ and $\Delta_h^2f(x) = \Delta_h\left(\Delta_h f \left( x \right) \right)$ for $x, h \in \bR^d$.
For instance, first suppose that $k=0$ and $u \in \bH_{p, q, w}^{\alpha, 2}(T)$ ($w(t, x) = t^{\mu}$) is sufficiently smooth. Note that, for any $s \in (0, T)$,
\[
\Delta_h^2 u(0, x) = -\left( u \left( s, x +  2h \right) - u \left( 0, x + 2 h \right) \right) + 2\left( u \left( s, x +  h \right) - u \left( 0, x + h \right) \right)
\]
\[
-\left( u \left( s, x \right) - u \left( 0, x  \right) \right) + \Delta_h^2 u(s, x).
\]
By multiplying $(t-s)^{-\alpha}$ and then taking the integral with respect to $s \in (0, t)$ on both sides, for any $t \in (0, T]$, we have
\[
| \Delta_h^2 u(0, x) | \le N(\alpha) \sum_{m = 0}^2 t^{\alpha - 1} \left| \int_0^t f(s, x + m h)\,\mathrm{d}s \right| + N(\alpha) t^{\alpha - 1} \left| \int_0^t \Delta_h^2u(s, x)\,\mathrm{d}s \right|,
\]
where $f = \partial_t^{\alpha}u$.
Then, $\| \Delta_h^2 u(0, \cdot ) \|_{L_p}$ is bounded by $N(\alpha)$ times
\[
 t^{\alpha - 1} \int_0^t \|f(s, \cdot)\|_{L_p} \,\mathrm{d}s +  t^{\alpha - 1} \int_0^t \|\Delta_h^2u(s, \cdot) \|_{L_p}\,\mathrm{d}s := A(t) + B(t)
\]
for $t \in (0, T]$.
Now we take $t = |h|^{2/\alpha}$ for $h \in \bR^d$ ($|h| \le T^{\alpha/2}$) and let $\theta = 1 - (1+\mu)/q\alpha > 0$.
By direct computation, for example, by performing a change of variables,
\[
\int_{ \{|h| \le T^{\alpha/2}\}} |h|^{-2\theta q} A(|h|^{2/\alpha})^q \,\frac{\mathrm{d}h}{|h|^d} = N \int_0^{T}  \left(t^{-1} \int_0^t \|f(s, \cdot)\|_{L_p}\,\mathrm{d}s \right)^q t^{\mu}\,\mathrm{d}t
\]
\[
\le N(d, \alpha, q, \mu, T)\int_0^{T} \|f(s, \cdot)\|_{L_p}^q t^{\mu}\,\mathrm{d}t,
\]
where the last inequality is due to Hardy's inequality.
Similarly, we also have
\[
\int_{ \{|h| \le T^{\alpha/2}\}} |h|^{-2\theta q} B(|h|^{2/\alpha})^q \,\frac{\mathrm{d}h}{|h|^d} \le N(d, \alpha, q, \mu, T)\int_0^{T} \|D^2 u(s, \cdot)\|_{L_p}^q t^{\mu}\,\mathrm{d}t,
\]
with the help of \eqref{eq0224_03} of Lemma \ref{lem0224}. It is clear that
\[
\int_{ \{|h| > T^{\alpha/2}\}} |h|^{-2\theta q} \| \Delta_h u(0, \cdot)\|_{L_p}^q \,\frac{\mathrm{d}h}{|h|^d} \le N(d, \alpha, q, \mu, T) \|u(0, \cdot)\|_{L_p}^q.
\]
Thus by Lemma \ref{lem0525_1}, we have
\[
\| u(0, \cdot) \|_{B_{p, q}^{2\theta}} \le N(d, \alpha, p, q, \mu, T) \| u \|_{\bH_{p, q, w}^{\alpha, 2}(T)}.
\]

Now we consider $k > 0$.
Let $u \in \bH_{p, q, w}^{k+\alpha, 2}(T)$ ($w(t, x) = t^{\mu}$) be sufficiently smooth.
We use \eqref{equ0101_01} where $1/\beta$ is replaced by $2/\beta$ (due to the scaling issue). Then by taking $\varepsilon = |h|$ in \eqref{equ0101_01}, for $\theta_n = ( k + \alpha - n - \frac{1+\mu}{q} ) / \left( k+\alpha \right)$ we obtain
\[
\| \partial_t^n u(0, \cdot) \|_{B_{p, q}^{2\theta_n}} \le N(d, \alpha, p, q, \mu, k, n, T) \| u \|_{\bH_{p, q, w}^{k + \alpha, 2}(T)}
\]
by estimating the norm in \eqref{eq2306291250} with the help of \eqref{eq0224_03} of Lemma \ref{lem0224}. We omit the details since it is similar to the estimation about $U_0$ and $U_1$ in the proof of Theorem \ref{thm0111_01}.
For the case $u \in \cH_{p, q, w}^{k+\alpha, 1}(T)$ ($w(t, x) = t^{\mu}$), we use $(1 - \Delta)^{-1/2}$; for example, see the proof of Theorem \ref{cor0111_01}.
\end{remark}

\subsection{Proof of Theorems \ref{thm_subdiff} and \ref{thm_supdiff}}

In this subsection, we only consider $k = 0$ or $1$, that is, $k + \alpha = \beta \in (0, 2)$. 
We start with the following lemmas.

\begin{lemma}
\label{lem0207}
Let $\mathfrak{T}$ be an operator defined on $\cS(\bR^d)$ whose Fourier multiplier $m$ is a bounded function on $\bR^d$, i.e., $\cF(\mathfrak{T} f)(\xi) = m(\xi)\cF(f)(\xi)$, $\xi \in \bR^d$. If there is a constant $ \mathfrak{D}(m) >0$ such that
\begin{equation}
	\label{eq0207_01}
	\sup_{r > 0} \left( r^{2|\gamma| - d} \int_{r < |\xi| < 2r} \left| D^{\gamma}m(\xi) \right|^2\,\mathrm{d}\xi \right)^{1/2} \le \mathfrak{D}(m)
\end{equation}
for any multi-index $\gamma$ with $|\gamma| \le d$, then for any $w_2 \in A_p$ with $p \in (1, \infty)$, we have
\begin{equation}
	\label{eq0207_02}
\| \mathfrak{T} f\|_{L_{p, w_2}} \le N(d, p, [w_2]_{A_p})\mathfrak{D}(m) \| f\|_{L_{p, w_2}}.
\end{equation}
\end{lemma}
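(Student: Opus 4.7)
The plan is to recognize that condition \eqref{eq0207_01} is a standard Hörmander-type Fourier multiplier condition, and to derive \eqref{eq0207_02} by showing that $\mathfrak{T}$ is a Calderón-Zygmund operator and then invoking the classical $A_p$-weighted boundedness theorem for such operators (the weighted Mihlin/Kurtz-Wheeden multiplier theorem).

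First, I would establish the unweighted $L^2$ bound. Taking $\gamma = 0$ in \eqref{eq0207_01} and letting $r$ vary shows (via a standard averaging argument on dyadic annuli) that $m \in L^{\infty}(\bR^d)$ with $\|m\|_{L^{\infty}} \leq N \mathfrak{D}(m)$, so by Plancherel $\|\mathfrak{T}f\|_{L^2} \leq N \mathfrak{D}(m)\|f\|_{L^2}$.

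Next, and this is the main technical step, I would show that the distributional kernel $K := \mathcal{F}^{-1} m$ satisfies the Hörmander regularity condition
\[
\sup_{y \neq 0} \int_{|x| > 2|y|} |K(x-y) - K(x)|\,\mathrm{d}x \leq N \mathfrak{D}(m).
\]
The route is a Littlewood-Paley decomposition: fix $\phi \in C_0^{\infty}(\bR^d)$ with $\operatorname{supp}\phi \subset \{1/2 \leq |\xi| \leq 2\}$ and $\sum_{j \in \bZ}\phi(2^{-j}\xi) = 1$ on $\bR^d \setminus \{0\}$, decompose $m = \sum_j m_j$ with $m_j(\xi) := m(\xi)\phi(2^{-j}\xi)$, and set $K_j := \mathcal{F}^{-1} m_j$. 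Using Cauchy-Schwarz together with \eqref{eq0207_01} up to $|\gamma| = d$ (which is more derivatives than the classical threshold $[d/2]+1$, so the estimate is rather generous), one obtains the pointwise decay
\[
|K_j(x)| + 2^{-j}|\nabla K_j(x)| \leq N \mathfrak{D}(m)\,2^{jd}(1 + 2^j|x|)^{-d-1}.
\]
Summing over $j$ in the regions $2^j|y| \lesssim 1$ (where one uses the gradient bound and $|K(x-y) - K(x)| \leq |y|\sup|\nabla K_j|$) and $2^j|y| \gtrsim 1$ (where one uses the size bound directly) yields the Hörmander integral condition on $K$. This is a textbook calculation, e.g., in Stein's \emph{Singular Integrals} or Grafakos's \emph{Classical Fourier Analysis}.

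With $L^2$-boundedness and the kernel Hörmander condition established, $\mathfrak{T}$ is a Calderón-Zygmund operator whose constants are dominated by $\mathfrak{D}(m)$. The estimate \eqref{eq0207_02} then follows immediately from the classical weighted boundedness of Calderón-Zygmund operators on $A_p$-weighted Lebesgue spaces, with constant $N(d,p,[w_2]_{A_p})\mathfrak{D}(m)$. The main obstacle is the kernel estimate described above; the $L^2$ step and the final citation are essentially free.
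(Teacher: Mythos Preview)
The paper does not prove this lemma; it simply cites it as the special case $s=2$, $n=l=d$ of Kurtz--Wheeden \cite[Theorem~1-(1)]{MR0542885}, with the remark that the linear dependence on $\mathfrak{D}(m)$ is seen by tracking constants through their proof. So you are not competing against a proof but against a citation, and in effect your proposal is a sketch of (part of) the Kurtz--Wheeden argument itself.

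That said, your sketch has a gap at the kernel step. The hypothesis \eqref{eq0207_01} controls $D^\gamma m$ only for $|\gamma|\le d$, so after localizing to $|\xi|\sim 2^j$ the best pointwise decay you can extract (via Cauchy--Schwarz on a shell of measure $\sim 2^{jd}$) is
\[
|K_j(x)| + 2^{-j}|\nabla K_j(x)| \le N\mathfrak{D}(m)\,2^{jd}(1+2^j|x|)^{-d},
\]
not exponent $-(d+1)$ as you wrote; that would require $d+1$ derivatives of $m$, one more than you have. With the correct exponent $-d$, the sum $\sum_j|\nabla K_j(x)|$ diverges logarithmically in the regime $2^j|x|>1$, so you do not obtain the pointwise Calder\'on--Zygmund gradient bound $|\nabla K(x)|\lesssim|x|^{-d-1}$. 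You can still recover the H\"ormander \emph{integral} condition on $K$ by the two-regime summation, but $L^2$-boundedness plus the H\"ormander integral condition is not the hypothesis of the Coifman--Fefferman weighted theorem you want to invoke at the end; $A_p$-weighted boundedness of singular integrals requires the stronger pointwise (or Dini-type) kernel regularity. Kurtz and Wheeden avoid this obstruction by proving the weighted multiplier bound directly through weighted Littlewood--Paley square-function estimates rather than by summing kernel bounds to a single Calder\'on--Zygmund kernel --- that is the ingredient missing from your route.
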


Lemma \ref{lem0207} is a particular case of \cite[Theorem 1-(1)]{MR0542885} with $s=2$ and $n=l=d$ in there.
In particular, in \cite[Theorem 1-(1)]{MR0542885}, the authors do not specify how the estimate \eqref{eq0207_02} depends on $\mathfrak{D}(m)$, but by keeping track of the role of $\mathfrak{D}(m)$ in its proof, one verifies that it is of the above form.

\begin{lemma}
\label{lem0201}
Let $\alpha \in (0, 1)$ and $f \in L_{p, w_2}$ with $p \in (1, \infty)$, $w_2 \in A_p$. Then for any $\kappa \in (0, \alpha)$, we have
\begin{equation}
\label{eq0131_01_2}
\| \left( P_{\alpha} \ast \psi_j \right) \ast f (t, \cdot)\|_{L_{p, w_2}} \le N(d, \alpha, \kappa, [w_2]_{A_p}) \left( 2^{-2\kappa j/\alpha} t^{-\kappa} \wedge 1 \right) \| f\|_{L_{p, w_2}}
\end{equation}
for $j =   0, \pm 1, \pm 2, \ldots$, and
\begin{equation}
\label{eq0131_01}
\| \left( P_{\alpha} \ast \Psi \right) \ast f (t, \cdot)\|_{L_{p, w_2}} \le N(d, \alpha, [w_2]_{A_p})  \| f\|_{L_{p, w_2}}.
\end{equation}
\end{lemma}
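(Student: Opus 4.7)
The plan is to identify $P_\alpha(t,\cdot)\ast\psi_j$ and $P_\alpha(t,\cdot)\ast\Psi$ as Fourier multiplier operators with symbols
\[
m_j(\xi) := E_\alpha(-|\xi|^2 t^\alpha)\hat\psi(2^{-j}\xi), \qquad m_\Psi(\xi) := E_\alpha(-|\xi|^2 t^\alpha)\hat\Psi(\xi),
\]
using $\hat P_\alpha(t,\xi) = E_\alpha(-|\xi|^2 t^\alpha)$ from Remark \ref{rmk_mittag}, and then to verify the H\"ormander--Mihlin condition \eqref{eq0207_01} for each symbol before invoking Lemma \ref{lem0207}. The essential scalar input I will need is the pointwise bound
\[
|E_\alpha^{(k)}(-v)| \le N_k(1+v)^{-k-1}, \qquad v \ge 0,\, k = 0, 1, 2, \ldots,
\]
which I will derive from the integral representation \eqref{eq0209_02} (differentiating in $v$ and substituting $r \mapsto r v^{1/\alpha}$ to track the $v$-dependence) together with continuity of $E_\alpha^{(k)}$ near $v=0$ (since $E_\alpha$ is entire).

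For \eqref{eq0131_01_2}, the natural reduction is the rescaling $\xi = 2^j \eta$, turning the symbol into $\tilde m(\eta) = E_\alpha(-\lambda|\eta|^2)\hat\psi(\eta)$ supported on $|\eta|\sim 1$ with $\lambda := 2^{2j}t^\alpha$. Expanding $D^\gamma\tilde m$ by the Fa\`a di Bruno and Leibniz rules, each term has the structure $c\,\lambda^k|\eta|^{2k-|\gamma_1|} E_\alpha^{(k)}(-\lambda|\eta|^2) D^{\gamma_2}\hat\psi(\eta)$ with $k \le |\gamma_1| \le 2k$. The elementary inequality $\lambda^k(1+\lambda)^{-k-1} \le (1+\lambda)^{-1}$ together with the derivative bound on $E_\alpha^{(k)}$ yields $|D^\gamma\tilde m(\eta)| \le N (1+\lambda)^{-1}$ uniformly on $|\eta|\sim 1$. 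Returning to $m_j$, only the annulus $r\sim 2^j$ contributes to the supremum in \eqref{eq0207_01}, and the $2^{-j|\gamma|}$ factors from the chain rule cancel the $r^{2|\gamma|-d}\cdot r^d \sim 2^{2j|\gamma|}$ weight, giving $\mathfrak{D}(m_j)\le N(1+\lambda)^{-1}$. Since $(1+\lambda)^{-1} \le \lambda^{-\kappa/\alpha} \wedge 1 = 2^{-2\kappa j/\alpha} t^{-\kappa} \wedge 1$ whenever $\kappa \le \alpha$, Lemma \ref{lem0207} delivers \eqref{eq0131_01_2}.

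For \eqref{eq0131_01}, the symbol $m_\Psi$ is compactly supported in $\{|\xi|\le 2\}$. Expanding $D^\gamma m_\Psi$ by the same chain rule and splitting the support according to whether $|\xi|^2 t^\alpha$ is $\le 1$ or $\ge 1$, I expect the pointwise bounds $|D^\gamma m_\Psi(\xi)| \le N t^{\alpha|\gamma|/2}$ on $\{|\xi| \le t^{-\alpha/2}\}$ and $|D^\gamma m_\Psi(\xi)| \le N t^{-\alpha}|\xi|^{-|\gamma|-2}$ on $\{|\xi| \ge t^{-\alpha/2}\}$. A short direct computation then shows $r^{2|\gamma|-d}\int_{r<|\xi|<2r}|D^\gamma m_\Psi|^2\,\mathrm{d}\xi \le N$ uniformly in $t>0$ and $r>0$: the low-frequency contribution is $\le N t^{\alpha|\gamma|}r^{2|\gamma|}\le N$ (since $r \le t^{-\alpha/2}$ there), and the high-frequency contribution is $\le N t^{-2\alpha}r^{-4}\le N$ (since $r\ge t^{-\alpha/2}$). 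Therefore $\mathfrak{D}(m_\Psi)\le N$ and \eqref{eq0131_01} follows.

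The main technical obstacle is the derivative estimate $|E_\alpha^{(k)}(-v)|\le N_k(1+v)^{-k-1}$; while intuitive from the $v^{-1}$ decay of $E_\alpha(-v)$, a clean justification needs care with the integral representation. A self-contained alternative that avoids all Mittag--Leffler bookkeeping is Bochner's subordination identity $E_\alpha(-v) = \int_0^\infty e^{-vs}\rho_\alpha(s)\,\mathrm{d}s$ for a probability density $\rho_\alpha$ on $(0,\infty)$, which gives $P_\alpha(t,\cdot) = \int_0^\infty G_{t^\alpha s}\,\rho_\alpha(s)\,\mathrm{d}s$ with $G_u$ the Gaussian heat kernel. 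Combining Minkowski's inequality with the standard weighted Littlewood--Paley bound $\|G_u\ast\psi_j\ast f\|_{L_{p,w_2}}\le N e^{-c 2^{2j}u}\|f\|_{L_{p,w_2}}$ (itself a one-line application of Lemma \ref{lem0207} to $e^{-u|\xi|^2}\hat\psi(2^{-j}\xi)$) reduces \eqref{eq0131_01_2} to the scalar bound $\int_0^\infty e^{-c 2^{2j}t^\alpha s}\rho_\alpha(s)\,\mathrm{d}s = E_\alpha(-c 2^{2j}t^\alpha)\le N(1+2^{2j}t^\alpha)^{-1}$, and \eqref{eq0131_01} to the normalization $\int_0^\infty\rho_\alpha\,\mathrm{d}s=1$.
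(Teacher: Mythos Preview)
Your approach is correct and follows the same overall strategy as the paper---verify the H\"ormander--Mihlin condition \eqref{eq0207_01} and invoke Lemma \ref{lem0207}---but the execution differs in two places worth noting.

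For \eqref{eq0131_01_2}, the paper obtains the two sides of the minimum by two separate arguments: the decay bound $(2^{-2j/\alpha}t^{-1})^\kappa$ comes from inserting the integral representation \eqref{eq0209_02} of $E_\alpha$ directly (as in your derivative bound), while the uniform bound $\le 1$ is obtained via the physical-space estimate $|D_\xi^\gamma\hat P(t,\xi)|\le N\||\cdot|^{|\gamma|}P(t,\cdot)\|_{L_1}$ together with the pointwise decay \eqref{eq0209_03} of $P_\alpha(1,\cdot)$. Your single bound $|E_\alpha^{(k)}(-v)|\le N_k(1+v)^{-k-1}$ handles both regimes at once and yields the slightly sharper $\mathfrak D(m_j)\le N(1+2^{2j}t^\alpha)^{-1}$; this is cleaner, provided you justify the derivative estimate carefully (your subordination alternative gives the shortest route here).

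For \eqref{eq0131_01}, the paper does \emph{not} verify the H\"ormander condition for $m_\Psi$: it factors $(P_\alpha\ast\Psi)\ast f=\Psi\ast(P_\alpha\ast f)$, uses that $\hat\Psi$ alone is a bounded multiplier, and then bounds $\|P_\alpha\ast f\|_{L_{p,w_2}}$ via the pointwise domination $P_\alpha\ast f\le N\cM f$ and the weighted maximal inequality. Your direct multiplier argument also works, but your stated pointwise bounds are not quite right: when $|\gamma|\ge 1$ and $t$ is small (so $t^{-\alpha/2}>2$), the Leibniz cross-terms $E_\alpha(-|\xi|^2t^\alpha)\,D^{\gamma_2}\hat\Psi(\xi)$ with $|\gamma_2|\ge 1$ live on $1\le|\xi|\le 2$ and are $\sim 1$, not $\le Nt^{\alpha|\gamma|/2}$. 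These terms nonetheless contribute $O(1)$ to $\mathfrak D(m_\Psi)$ since their support forces $r\sim 1$; a cleaner statement is simply $|D^\gamma m_\Psi(\xi)|\le N|\xi|^{-|\gamma|}1_{|\xi|\le 2}$ uniformly in $t$, which follows from your Fa\`a di Bruno computation and $v^k(1+v)^{-k-1}\le 1$. The paper's route sidesteps this bookkeeping entirely.
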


\begin{proof}
For simplicity, set $P_{\alpha} = P$ and $P_{\alpha} \ast \psi_j = P_j$ for $j =   0, \pm 1, \pm 2, \ldots$. By using the integral representation \eqref{eq0209_02} in Remark \ref{rmk_mittag} with the fact that $\hat{P}_j = \hat{P}\, \hat{\psi_j}$ and $\operatorname{supp} \hat{\psi}_j \subset \{2^{j-1} \le |\xi| \le 2^j \}$, we verify that
\begin{equation}
\label{eq0209_01}
\left| D_{\xi}^{\gamma} \hat{P}_j (t, \xi) \right| \le N(d, \alpha, \kappa, \gamma)2^{-j|\gamma|}  \left(  2^{-2j/\alpha} t^{-1} \right)^{\kappa} 1_{2^{j-1} \le |\xi| \le 2^{j+1}}
\end{equation}
for any multi-index $\gamma$ and for any $\kappa \in (0, \alpha)$. 
Indeed, for $\gamma = 0$ and for any $\kappa \in (0, \alpha)$, we have
\[
\left| \hat{P}_j(t, \xi) \right| \le N 1_{2^{j-1} \le |\xi| \le 2^{j+1}} \int_0^{\infty} \frac{r^{\alpha - 1}}{r^{2\alpha} + 2r^{\alpha} \cos{\alpha \pi} + 1} e^{-rt |\xi|^{2/\alpha}}\,\mathrm{d}r 
\]
\[
\le N1_{2^{j-1} \le |\xi| \le 2^{j+1}} \int_0^{1} r^{\alpha - 1} e^{-rt |\xi|^{2/\alpha}} \left( rt |\xi|^{2/\alpha} \right)^{\kappa}\left( rt |\xi|^{2/\alpha} \right)^{-\kappa} \,\mathrm{d}r 
\]
\[
+ N1_{2^{j-1} \le |\xi| \le 2^{j+1}} \int_1^{\infty} r^{-\alpha - 1} e^{-rt |\xi|^{2/\alpha}}  \left( rt |\xi|^{2/\alpha} \right)^{\kappa}\left( rt |\xi|^{2/\alpha} \right)^{-\kappa} \,\mathrm{d}r
\]
\[
\le N 1_{2^{j-1} \le |\xi| \le 2^{j+1}} \left(  2^{-2j/\alpha} t^{-1} \right)^{\kappa}  \left( \int_0^{1} r^{\alpha - \kappa - 1} \,\mathrm{d}r + \int_1^{\infty} r^{-\alpha - \kappa - 1} \,\mathrm{d}r \right)
\]
\[
\le N(d, \alpha, \kappa) 1_{2^{j-1} \le |\xi| \le 2^{j+1}}  \left(  2^{-2j/\alpha} t^{-1} \right)^{\kappa},
\]
where we use the fact that $e^{-rt |\xi|^{2/\alpha}} \left( rt |\xi|^{2/\alpha} \right)^{\kappa+m}$ is uniformly bounded in $(r, t, \xi)$ for any $m \ge 0$ and the last inequality is due to $\kappa \in (0, \alpha)$.
The same argument holds for $\gamma$ such that $|\gamma| \neq 0$, with the help of $|D_{\xi}^{\gamma_1} \hat{\psi}_j| \le N(d) 2^{-j|\gamma_1|}$ for any multi-index $\gamma_1$.
Then
$$
\sup_{r > 0} \left( r^{2|\gamma| - d} \int_{r < |\xi| < 2r} \left| D_{\xi}^{\gamma}\hat{P}_j(t, \xi) \right|^2\,\mathrm{d}\xi \right)^{1/2}
$$
$$
 = N 2^{-j|\gamma| -2\kappa j/\alpha} t^{-\kappa}  \sup_{2^{j-2} < r < 2^{j+1}} \left(  r^{2|\gamma| - d} \int_{r < |\xi| < 2r} 1_{2^{j-1} \le |\xi| \le 2^{j+1}} \,\mathrm{d}\xi \right)^{1/2}
$$
$$
\le N \left( 2^{-2j/\alpha}t^{-1} \right)^{\kappa}
$$
for $N = N(d, \alpha, \kappa, \gamma) > 0$.

On the other hand, since $|D_{\xi}^{\gamma} \hat{P} (t, \xi)| \le N \| |\cdot|^{|\gamma|} P(t, \cdot) \|_{L_1}$ and $P(1, x)$ satisfies \eqref{eq0209_03} in Remark \ref{rmk_mittag}, by direct calculation we get
$$
\left| D_{\xi}^{\gamma} \hat{P}_j (t, \xi) \right| \le N \sum_{|\gamma_1| \le |\gamma|} 1_{2^{j-1} \le |\xi| \le 2^{j+1}} 2^{-j |\gamma_1|} t^{ \left|    \left( \gamma \right| - \left| \gamma_1 \right| \right) \alpha/2}
$$
for any multi-index $\gamma$, and then,
$$
\sup_{r > 0} \left( r^{2|\gamma| - d} \int_{r < |\xi| < 2r} \left| D_{\xi}^{\gamma}\hat{P}_j(t, \xi) \right|^2\,\mathrm{d}\xi \right)^{1/2} \le N \sum_{|\gamma_1| \le |\gamma|}  t^{ \left(    \left| \gamma \right| - \left| \gamma_1 \right| \right) \alpha/2} 2^{j\left( \left| \gamma \right| - \left| \gamma_1 \right| \right)},
$$
where $N = N(d, \alpha, \gamma) > 0$.
Observe that if $t\le 2^{-2j/\alpha}$, the last summation in the above inequality is bounded by $ \sum_{|\gamma_1| \le |\gamma|} 1 = N(\gamma)$. Therefore, for any multi-index $\gamma$ with $|\gamma| \le d$, we have \eqref{eq0207_01} with $\hat{P}_j$ in place of $m$, where
$$
\mathfrak{D}(\hat{P}_j) \le N(d, \alpha, \kappa) \left(  2^{-2j/\alpha}t^{-1} \right)^{\kappa} \wedge 1.
$$
This along with Lemma \ref{lem0207} proves \eqref{eq0131_01_2}.

To prove \eqref{eq0131_01}, recall that $\hat{\Psi} \in C_0^{\infty}(\bR^d)$, $\operatorname{supp} \hat{\Psi} \subset \{|\xi| \le 2 \}$ and $\hat{\Psi} = 1$ on $\{|\xi| \le 1 \}$.
It is easily seen that $\hat{\Psi}$ satisfies \eqref{eq0207_01} with $\mathfrak{D}(\hat{\Psi}) = N(d)$ and then by Lemma \ref{lem0207},
\[
\| \left( P_{\alpha} \ast \Psi \right) \ast f (t, \cdot)\|_{L_{p, w_2}} = \|  \Psi \ast \left( P_{\alpha} \ast f \right)  (t, \cdot)\|_{L_{p, w_2}} \le N(d) \|   P_{\alpha} \ast f  (t, \cdot)\|_{L_{p, w_2}}.
\]
Also, by a suitable dyadic decomposition, we have $P_{\alpha} \ast f \le N(d, \alpha) \cM f$ where $\cM$ is the Hardy-Littlewood maximal operator.
Then by Hardy-Littlewood maximal function theorem with $A_p$-weights, we obtain \eqref{eq0131_01}. The lemma is proved.
\end{proof}

We prove the key estimate of this subsection in the following proposition.

\begin{proposition}
\label{prop0201}
Let $p, q \in (1, \infty)$, $w = t^{\mu}w_2(x)$ with $\mu \in (-1, q-1)$, $w_2 \in A_p$. Suppose $\alpha \in \left( \left(1 + \mu \right)/q, 1\right)$. Then, for $f \in C_0^{\infty}(\bR^d)$, we have
\begin{equation}
\label{eq0131_04}
\| P_{\alpha} \ast f \|_{L_{p, q, w}(T)} \le N \| f \|_{B_{p, q, w_2}^{-\frac{2(1+\mu)}{q\alpha}}},
\end{equation}
where $N = N(d, \alpha, p, q, [w_2]_{A_p}, \mu, T) > 0$.
\end{proposition}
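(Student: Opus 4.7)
The plan is to reduce \eqref{eq0131_04} to an estimate on each Littlewood--Paley piece of $f$ and then control the resulting weighted sum in time. Writing $f = \Psi \ast f + \sum_{j\ge 1}\psi_j \ast f$ as in Definition \ref{def_besov}, I introduce fattened cutoffs $\tilde{\Psi},\tilde{\psi}_j \in \cS(\bR^d)$ satisfying $\widehat{\tilde{\Psi}} \equiv 1$ on $\operatorname{supp}\hat{\Psi}$ and $\widehat{\tilde{\psi}}_j \equiv 1$ on $\operatorname{supp}\hat{\psi}_j$, so that $\tilde{\Psi}\ast\Psi = \Psi$ and $\tilde{\psi}_j\ast\psi_j = \psi_j$. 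This lets me rewrite $P_{\alpha}\ast\Psi\ast f = (P_{\alpha}\ast\tilde{\Psi})\ast(\Psi\ast f)$ and $P_\alpha\ast\psi_j\ast f = (P_\alpha\ast\tilde{\psi}_j)\ast(\psi_j\ast f)$. Since the symbols $\widehat{\tilde{\Psi}},\widehat{\tilde{\psi}_j}$ enjoy the same frequency-localized bounds used in the proof of Lemma \ref{lem0201}, the same argument produces, for any $\kappa\in(0,\alpha)$ and $t\in(0,T)$,
\[
\|P_\alpha(t,\cdot)\ast\Psi\ast f\|_{L_{p,w_2}} \le N\,\|\Psi\ast f\|_{L_{p,w_2}},
\]
\[
\|P_\alpha(t,\cdot)\ast\psi_j\ast f\|_{L_{p,w_2}} \le N\,c_j(t)\,\|\psi_j\ast f\|_{L_{p,w_2}},
\]
where $c_j(t) = \bigl(2^{-2\kappa j/\alpha} t^{-\kappa}\bigr)\wedge 1$.

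Integrating the first bound against $t^\mu$ over $(0,T)$ immediately yields $\|P_\alpha\ast\Psi\ast f\|_{L_{p,q,w}(T)} \le N T^{(1+\mu)/q}\,\|\Psi\ast f\|_{L_{p,w_2}}$, comfortably dominated by the right-hand side of \eqref{eq0131_04}. The substance of the argument lies in the high-frequency sum: setting $a_j := \|\psi_j\ast f\|_{L_{p,w_2}}$ and $s := -2(1+\mu)/(q\alpha)$, I need to show
\[
\int_0^T\Bigl(\sum_{j\ge 1}c_j(t)a_j\Bigr)^q t^\mu\,\mathrm{d}t \le N\sum_{j\ge 1}2^{sqj}a_j^q.
\]
I fix $\kappa\in((1+\mu)/q,\alpha)$ --- such $\kappa$ exists precisely because of the hypothesis $\alpha>(1+\mu)/q$ --- and dyadically decompose time via $I_k := (2^{-2(k+1)/\alpha},2^{-2k/\alpha}]$ for $k\in\bZ$. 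On $I_k$ one has $c_j(t) = 1$ for $j\le k$ and $c_j(t)\simeq 2^{-2\kappa(j-k)/\alpha}$ for $j>k$, while $\int_{I_k}t^\mu\,\mathrm{d}t\simeq 2^{sqk}$ since $sq = -2(1+\mu)/\alpha$. Thus the left side is bounded by $N\sum_{k\in\bZ}2^{sqk}b_k^q$, where $b_k := \sum_{1\le j\le k}a_j + \sum_{j>k\vee 0}2^{-2\kappa(j-k)/\alpha}a_j$.

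Setting $\tilde{a}_j := 2^{sj}a_j$ (extended by zero for $j\le 0$) and $\tilde{b}_k := 2^{sk}b_k$ converts the two halves of $\tilde{b}_k$ into discrete convolutions of $\tilde{a}$ on $\bZ$ with kernels $K^{(1)}_m = 2^{sm}\mathbf{1}_{m\ge 0}$ and $K^{(2)}_n = 2^{(s+2\kappa/\alpha)n}\mathbf{1}_{n<0}$. Both lie in $\ell^1(\bZ)$: the first because $s<0$, the second because $s+2\kappa/\alpha>0$, which is equivalent to $\kappa > -s\alpha/2 = (1+\mu)/q$. Discrete Young's inequality then gives $\|\tilde{b}\|_{\ell^q(\bZ)}\le N\|\tilde{a}\|_{\ell^q(\bZ)}$, i.e. $\sum_k 2^{sqk}b_k^q \le N\sum_{j\ge 1}2^{sqj}a_j^q \le N\|f\|_{B^s_{p,q,w_2}}^q$, which combined with the low-frequency estimate yields \eqref{eq0131_04}.

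The main obstacle is precisely this passage from the pointwise-in-$t$ pieces to the $L_q(t^\mu\,\mathrm{d}t)$ norm: a naive Minkowski inequality on $\|\sum_j c_j(t)a_j\|_{L_q(t^\mu\,\mathrm{d}t)}$ only produces $\sum_j 2^{sj}a_j$, an $\ell^1$ expression strictly larger than the $\ell^q$ expression $\bigl(\sum_j 2^{sqj}a_j^q\bigr)^{1/q}$ sitting inside the Besov norm. The dyadic-in-$t$ decomposition together with discrete Young's inequality is what restores the correct $\ell^q$ summability, and the two constraints $\kappa>(1+\mu)/q$ (so that the decay kernel is $\ell^1$-summable) and $\kappa<\alpha$ (so that Lemma \ref{lem0201} applies) must hold simultaneously --- exactly the window opened by the assumption $\alpha>(1+\mu)/q$.
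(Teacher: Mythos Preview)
Your proof is correct and follows essentially the same approach as the paper: Littlewood--Paley decomposition of $f$, application of Lemma \ref{lem0201} to obtain the pointwise-in-$t$ bound $c_j(t)=(2^{-2\kappa j/\alpha}t^{-\kappa})\wedge 1$, and the crucial choice $\kappa\in((1+\mu)/q,\alpha)$. The only technical difference is in how the weighted $\ell^1$-in-$j$/$L_q$-in-$t$ estimate is carried out: the paper splits the $j$-sum at each fixed $t$ according to $t\lessgtr 2^{-2j/\alpha}$, applies H\"older's inequality with auxiliary exponents $r,r'$, and then Fubini, whereas you dyadically decompose $(0,T)$ in $t$ and recognize the resulting expression as a discrete convolution to which Young's inequality applies---these are interchangeable devices yielding the same constraints on $\kappa$.
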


\begin{proof}
We set $P_{\alpha} = P$ for notational simplicity. Also, for $g \in \{ f, P\}$, we set $g_j = g \ast \psi_j$, $j = 1, 2, \ldots$, and $g_0 = g \ast \Psi$ (not $g \ast \psi_0$).
Observe that  $\hat{\Psi} = \hat{\Psi}(\hat{\Psi} + \hat{\psi}_1)$, $\hat{\psi}_1 = \hat{\psi}_1(\hat{\Psi} + \hat{\psi}_1 + \hat{\psi}_{2})$, and $\hat{\psi}_j = \hat{\psi}_j(\hat{\psi}_{j-1} + \hat{\psi}_j + \hat{\psi}_{j+1})$ for $j = 2, 3, \ldots$, by the construction of $\Psi$ and $\psi_j$. Then
\[
P \ast f = P_0\ast f_0 + P_1 \ast f_0 + \sum_{j=1}^{\infty} \sum_{i = j-1}^{j+1} P_i \ast f_j,
\]
and by Lemma \ref{lem0201}, for some $\kappa \in (0, \alpha)$ ($\kappa$ is to be specified below.)
\[
\| P(t, \cdot) \ast f(\cdot) \|_{L_{p, w_2}} \le \|  P_0 \left( t, \cdot \right) \ast f_0 \left( \cdot \right) \|_{L_{p, w_2}} + \| P_1(t, \cdot) \ast  f_0\left( \cdot \right) \|_{L_{p, w_2}}
\]
\[
+ \sum_{j=1}^{\infty} \sum_{i = j-1}^{j+1}  \| P_i(t, \cdot) \ast f_j(\cdot) \|_{L_{p, w_2}}
\]
\[
\le N  \left( \| |f_0| + |f_1| \|_{L_{p, w_2}} +  \sum_{j=1}^{\infty}  \left(   2^{-2\kappa j/\alpha} t^{-\kappa} \wedge 1  \right) \| f_j\|_{L_{p, w_2}}     \right),
\]
where $N = N(d, \alpha, \kappa, p, [w_2]_{A_p}) > 0$.
From the above inequalities, it suffices to estimate $\cI$ and $\mathcal{J}$ where
\[
\cI :=  \int_0^T \| |f_0| + |f_1| \|_{L_{p, w_2}}^q t^{\mu}\,\mathrm{d}t 
\]
and
\[
 \mathcal{J} :=  \int_0^T \left( \sum_{j=1}^{\infty}  \left(   2^{-2\kappa j/\alpha} t^{-\kappa} \wedge 1  \right) \| f_j\|_{L_{p, w_2}} \right)^q t^{\mu}\,\mathrm{d}t .
\]
It is clear that
\[
\cI \le N(\alpha, q, \mu, T) \left(  \| f_0 \|_{L_{p, w_2}}^q + \| f_1 \|_{L_{p, w_2}}^q \right),
\]
where $s = -2(1+\mu)/(q\alpha)$.
On the other hand, note that $\mathcal{J}$ is bounded by a positive constant times
\[
 \int_0^T \left( \sum_{j=1}^{\infty}  \| f_j\|_{L_{p, w_2}} 1_{t \le 2^{-2j/\alpha}} \right)^q t^{\mu}  \,\mathrm{d}t
 \]
 \[
   +  \int_0^T \left( \sum_{j=1}^{\infty}  2^{-2\kappa j/\alpha}t^{-\kappa} \| f_j\|_{L_{p, w_2}} 1_{t > 2^{-2j/\alpha}} \right)^q t^{\mu}  \,\mathrm{d}t
:=  \mathcal{J}_1 +  \mathcal{J}_2.
\]
Estimate of $ \mathcal{J}_1$:
Observe that, for $t > 2^{-2/\alpha}$, the integrand of $\mathcal{J}_1$ is always zero. For each $t \in (0, 2^{-2/\alpha}]$, there exits the largest positive integer $j_1 = j_1(t)$ such that $t \le 2^{-2j_1/\alpha}$.
Then by H\"older's inequality, for some $r <0$,
\[
 \left( \sum_{j=1}^{\infty}  \| f_j\|_{L_{p, w_2}} 1_{t \le 2^{-2j/\alpha}} \right)^q =   \left( \sum_{j=1}^{j_1(t)}   \| f_j\|_{L_{p, w_2}}  \right)^q 1_{t \le 2^{-2j/\alpha}}
 \]
 \[
\le  \left( \sum_{j=1}^{j_1(t)} \left( 2^{-r \frac{2j}{\alpha}} \right)^{q'}    \right)^{q/q'}
 \sum_{j=1}^{j_1(t)} \left( 2^{r \frac{2j}{\alpha}} \right)^{q}     \| f_j\|_{L_{p, w_2}}^q 1_{t \le 2^{-2j/\alpha}}
 \]
 \[
\le N   \left( 2^{-2j_1/\alpha} \right)^{qr}  \sum_{j=1}^{j_1} \left( 2^{r \frac{2j}{\alpha}}     \| f_j\|_{L_{p, w_2}}\right)^{q} 1_{t \le 2^{-2j/\alpha}}
\]
\[
 \le  N t^{qr}  \sum_{j=1}^{j_1} \left( 2^{r \frac{2j}{\alpha}} \right)^{q}     \| f_j\|_{L_{p, w_2}}^q 1_{t \le 2^{-2j/\alpha}},
\]
where $N = N(r, \alpha, q)$ and the last inequality is due to $t \le 2^{-2j_1/\alpha}$ and $r < 0$.
Then, by taking $r < 0$ such that $qr+ \mu > -1$, i.e., $-(1+\mu)/q < r < 0$ (recall that $-(1+\mu)/q < 0$ since $\mu > -1$),
\[
 \mathcal{J}_1 \le N \sum_{j=1}^{\infty} \left( \int_0^{2^{-2j/\alpha}} t^{qr+\mu}\,\mathrm{d}t \right) 2^{\frac{2j}{\alpha}qr} \| f_j\|_{L_{p, w_2}}^q  \le N \sum_{j=1}^{\infty}   2^{jqs}\| f_j\|_{L_{p, w_2}}^q,
\]
where $s = - 2(1+\mu)/(q\alpha)$ and $N = N(\alpha, q, \mu)$.

Estimate of $ \mathcal{J}_2$: By repeating the above process with $t > 2^{-2j/\alpha}$, 
where $j_2 = j_2(t)$ is the smallest positive integer such that $t > 2^{-2j_2/\alpha}$,
\begin{align*}
 &\left(\sum_{j=1}^{\infty}  2^{-2\kappa j/\alpha}t^{-\kappa} \| f_j\|_{L_{p, w_2}} 1_{t > 2^{-2j/\alpha}} \right)^q
 \\
  &\le t^{-\kappa q}\left( \sum_{j = j_2}^{\infty} \left( 2^{-\frac{2j}{\alpha}(\kappa+r')} \right)^{q'} \right)^{q/q'}  \sum_{j = j_2}^{\infty} 2^{\frac{2j}{\alpha}q r'} \| f_j \|_{L_{p, w_2}}^q 1_{t > 2^{-2j/\alpha}}
\\
 &= Nt^{-\kappa q} 2^{-\frac{2j_2}{\alpha}q(\kappa + r')} \sum_{j = j_2}^{\infty} 2^{\frac{2j}{\alpha}q r'} \| f_j \|_{L_{p, w_2}}^q 1_{t > 2^{-2j/\alpha}}
\end{align*}
for some $r' > -\kappa$ such that $q r' + \mu < -1$ and $N = N(\alpha,q,\mu)$.
Note that such $r'$ always exists since $\kappa < \alpha$ and $\alpha > (1+\mu)/q$. Then, from the fact that $t > 2^{-2j_2/\alpha}$,
\[
 \mathcal{J}_2 \le N \sum_{j=1}^{\infty} \left( \int_{2^{-2j/\alpha}}^{\infty} t^{qr + \mu}\,\mathrm{d}t \right) 2^{\frac{2j}{\alpha}qr} \| f_j \|_{L_{p, w_2}}^q = N \sum_{j=1}^{\infty} 2^{jqs}\| f_j\|_{L_{p, w_2}}^q,
\]
where $s = - 2(1+\mu)/(q\alpha)$ and $N = N(\alpha, q, \mu)$.
Thus, by taking $\kappa \in (0, \alpha)$ such that $\kappa > (1+\mu)/q $ (for example, $2\kappa = \alpha + (1+\mu)/q$) and combining the estimates for $\mathcal{I}$ and $\mathcal{J}$, we have
\[
\| P_{\alpha} \ast f \|_{L_{p, q, w}(T)}^q \le N \left( \| f_0 \|_{L_{p, w_2}}^q + \sum_{j=1}^{\infty}  2^{jqs}\| f_j\|_{L_{p, w_2}}^q \right),
\]
that is, \eqref{eq0131_04} with $N = N(d, \alpha, p, q, [w_2]_{A_p}, \mu, T)$. The proposition is proved.
\end{proof}

With the help of Proposition \ref{prop0201}, we prove Theorem \ref{thm_subdiff}, the extension theorem for $\bH_{p, q, w}^{\alpha, 2}(T)$ and $\cH_{p, q, w}^{\alpha, 1}(T)$.

\begin{proof}[\textbf{Proof of Theorem \ref{thm_subdiff}}]
($i$)
It is enough to find $U(t, x) \in \bH_{p, q, w}^{\alpha, 2}(T)$ satisfying \eqref{eq0202_03} for $u_0(x) \in C_0^{\infty}(\bR^d)$. Take $U$ as in \eqref{eq0131_03}, that is, $U(t, \cdot) = P_{\alpha}(t, \cdot) \ast u_0(\cdot)$. Then by applying Proposition \ref{prop0201} to $(1 - \Delta)u_0$, we have
\[
\| |U| + |DU| + |D^2U| \|_{L_{p, q, w}(T)} \le  N \| (1 - \Delta) u_0 \|_{B_{p, q, w_2}^{2\theta - 2}} =  N\| u_0 \|_{B_{p, q, w_2}^{2\theta}},
\]
where $N = N(d, \alpha, p, q, [w_2]_{A_p}, \mu, T)$. This implies \eqref{eq0202_03} since $U$ satisfies $\partial_t^{\alpha}U = \Delta U$ in $\bR^d_T$.

($ii$) Let $w_0 = (1-\Delta)^{-1/2}v_0 \in B_{p, q, w_2}^{2\theta}$. Then by ($i$), there is a solution $W \in \bH_{p, q, w}^{\alpha, 2}(T)$ to \eqref{eq0203_01} satisfying $W(0, \cdot) = w_0(\cdot)$ and 
\[
\| W \|_{\bH_{p, q, w}^{\alpha, 2}(T)} \le N \| w_0 \|_{B_{p, q, w_2}^{2\theta}} = N \| v_0 \|_{B_{p, q, w_2}^{2\theta-1}},
\]
where $N = N(d, \alpha, p, q, [w_2]_{A_p}, \mu, T)$. 
By taking $V = (1 - \Delta)^{1/2}W \in \cH_{p, q, w}^{\alpha, 1}(T)$, we obtain \eqref{eq0224_01}.
In particular, since $\alpha > (1+\mu)/q$, $U(0, \cdot) = u_0(\cdot)$ and $V(0, \cdot) = v_0(\cdot)$ are well-defined in the trace sense by Theorems \ref{thm0111_01} and \ref{cor0111_01}, respectively.
The theorem is proved.
\end{proof}

\begin{remark}
	\label{rmk_homogeneous}
Recall that in order for the initial trace $u(0, \cdot) \in L_{p, w_2}(\Omega)$ to make sense for $u \in \bH_{p, q, w}^{\alpha, 0}(\Omega_T)$, we need $\alpha \ge (1+\mu)/q$ with $\mu \in (-1, q-1)$.
On the other hand, if $u_0 \in L_{p, w_2}(\Omega)$, the initial value of the homogeneous solution $U = P_{\alpha}(t, \cdot) \ast u_0(\cdot)$ is well-defined regardless of the range of $\alpha \in (0, 1)$.
This is made possible by appropriately choosing the time integrability $q$ and the power $\mu$ of weight $w_1(t) = t^{\mu}$.
In fact, this is also one of the advantages of the weighted $L_{p, q}$-theory.
For example, we simply consider an unweighted space, i.e., $w(t, x) = 1$, and an initial value $u_0 \in L_{p}$.
For a given $\alpha \in (0, 1)$, we choose $q \in (1, \infty)$ such that $1/\alpha < q < 2/\alpha$, i.e., $1 < q\alpha < 2$.
Then, for $s = 1 - 2/(q\alpha) < 0$, we have
\[
\|u_0\|_{B_{p, q}^{s}} \le N \| u_0 \|_{L_{p}},
\]
which means that $u_0 \in L_{p} \subset B_{p, q}^{1 - 2/(q\alpha)}$. 
Together with the condition $\alpha > 1/q$, this ensures that the solution $U \in \cH_{p, q}^{\alpha, 1}(T)$ satisfies $U(0, \cdot) = u(\cdot)$ in the trace sense by Theorem \ref{cor0111_01} and Theorem \ref{thm_subdiff}-($ii$).
On the other hand, by direct calculation, one can see that
\[
\| U(t, \cdot) - u_0(\cdot) \|_{L_{p}(\Omega)} \to 0 \quad \textrm{as} \quad t \to 0.
\]
For another perspective, we refer the reader to \cite{MR3814402} and \cite{MR4225517}, which discuss the $L_{2}$-theory.
\end{remark}

The following is a version of Lemma \ref{lem0201} for the case of $\beta = 1+ \alpha \in (0, 1)$.

\begin{lemma}
\label{lem0220}
Let $\alpha \in (0, 1)$, $\beta = 1 + \alpha$ and $f \in L_{p, w_2}$ with $p \in (1, \infty)$, $w_2 \in A_p$. Then for any $\kappa_1 \in (0, \beta)$ and $\kappa_2 \in (0, \alpha)$, we have
\begin{equation}
\label{eq0220_01}
\| \left( P_{\beta} \ast \psi_j \right) \ast f (t, \cdot)\|_{L_{p, w_2}} \le N(d, \alpha, \kappa_1, [w_2]_{A_p}) \left( 2^{-2\kappa_1 j/\beta} t^{-\kappa_1} \wedge 1 \right) \| f\|_{L_{p, w_2}},
\end{equation}
\begin{equation}
\label{eq0220_02}
\| \left( \widetilde{P}_{\beta} \ast \psi_j \right) \ast f (t, \cdot)\|_{L_{p, w_2}} \le N(d, \alpha, \kappa_2, [w_2]_{A_p}) \left( 2^{-2j/\beta}2^{-2\kappa_2 j/\beta} t^{-\kappa_2} \wedge t \right) \| f\|_{L_{p, w_2}}
\end{equation}
for $j =   0, \pm 1, \pm 2, \cdots$, and
\begin{equation}
\label{eq0220_03}
\left\| \left( P_{\beta} \ast \Psi \right) \ast f (t, \cdot) \right\|_{L_{p, w_2}} \le N(d, \alpha, [w_2]_{A_p})  \| f\|_{L_{p, w_2}},
\end{equation}
\begin{equation}
\label{eq0220_04}
\left\| \left( \widetilde{P}_{\beta} \ast \Psi \right) \ast f (t, \cdot) \right\|_{L_{p, w_2}} \le N(d, \alpha, [w_2]_{A_p})  t \| f\|_{L_{p, w_2}}.
\end{equation}
\end{lemma}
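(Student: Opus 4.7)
The plan is to mirror the proof of Lemma \ref{lem0201}, working now with $\beta = 1+\alpha \in (1,2)$. The two Fourier multipliers to analyze are
\[
\hat{P}_\beta(t,\xi) = E_\beta(-|\xi|^2 t^\beta) \quad \text{and} \quad \cF(\widetilde{P}_\beta)(t,\xi) = tE_{\beta,2}(-|\xi|^2 t^\beta),
\]
the second identity being \eqref{eq0223_02}. The key input is the integral representation \eqref{eq0223_01} of the two-parameter Mittag-Leffler function $E_{\beta,c}$, used with $c=1$ for $P_\beta$ and $c=2$ for $\widetilde P_\beta$. Since $\beta\in(1,2)$, I would fix $\delta\in(\pi\beta/2,\pi)$; then $\cos(\delta/\beta)<0$, so that the integrand has the exponentially decaying factor $e^{r^{1/\beta}\cos(\delta/\beta)}$, and the denominator satisfies $r^2+2rv\cos\delta+v^2 \ge v^2\sin^2\delta>0$.

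For the high-frequency pieces \eqref{eq0220_01} and \eqref{eq0220_02}, the slow-variable trick used in Lemma \ref{lem0201} (inserting $1=(rv^{1/\beta})^\eta(rv^{1/\beta})^{-\eta}$ and splitting the $r$-integral) yields the pointwise bounds
\[
|E_\beta(-v)|\le N(\beta,\kappa_1)\,v^{-\kappa_1/\beta},\qquad \kappa_1\in(0,\beta),
\]
\[
|E_{\beta,2}(-v)|\le N(\beta,\kappa_2)\,v^{-(1+\kappa_2)/\beta},\qquad \kappa_2\in(0,\alpha),
\]
where the restriction $\kappa_2<\alpha$ is equivalent to $(1+\kappa_2)/\beta<1$, i.e.\ to staying strictly inside the rate of the leading asymptotic $E_{\beta,c}(-v)\sim v^{-1}/\Gamma(c-\beta)$. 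Applying the same reasoning to $\gamma$-th $\xi$-derivatives of the integrand, together with $|D_\xi^\gamma\hat\psi_j|\le N 2^{-j|\gamma|}$ and the support property $|\xi|\sim 2^j$, I expect to obtain
\[
\bigl|D_\xi^\gamma[\hat{P}_\beta\hat\psi_j]\bigr|\le N\,2^{-j|\gamma|}\cdot 2^{-2\kappa_1 j/\beta}t^{-\kappa_1}\,\mathbf 1_{\{2^{j-1}\le|\xi|\le 2^{j+1}\}}
\]
and the analogous estimate for $tE_{\beta,2}(-|\xi|^2 t^\beta)\hat\psi_j$ with factor $2^{-2(1+\kappa_2)j/\beta}t^{-\kappa_2}$. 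Substituting into \eqref{eq0207_01} and invoking Lemma \ref{lem0207} produces the main parts of \eqref{eq0220_01} and \eqref{eq0220_02}. The trivial sides of the minima ($\wedge 1$ and $\wedge t$) come from starting instead from the uniform bounds $|E_\beta(-v)|\le N$ and $|tE_{\beta,2}(-v)|\le Nt$.

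For the low-frequency pieces \eqref{eq0220_03} and \eqref{eq0220_04}, the multipliers $\hat{P}_\beta(t,\xi)\hat\Psi(\xi)$ and $tE_{\beta,2}(-|\xi|^2 t^\beta)\hat\Psi(\xi)$ are smooth and compactly supported in $|\xi|\le 2$. I would show that their multiplier norms $\mathfrak{D}(\cdot)$ are bounded uniformly in $t>0$ (respectively by $Nt$) by combining the uniform bounds $|E_{\beta,c}(-v)|\le N$ with the large-$v$ asymptotics $E_{\beta,c}^{(k)}(-v)=O(v^{-k-1})$, extracted by differentiating \eqref{eq0223_01} under the integral sign; all resulting integrands still carry the factor $e^{r^{1/\beta}\cos(\delta/\beta)}$ and are controlled by $v$-dependent powers of the denominator. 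These are enough to verify \eqref{eq0207_01} for all $|\gamma|\le d$ uniformly in $t$, whence Lemma \ref{lem0207} gives \eqref{eq0220_03} and \eqref{eq0220_04}.

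The main obstacle will be the bookkeeping for the derivative estimates on $E_{\beta,c}^{(k)}(-v)$ via \eqref{eq0223_01}, which, unlike the exponential representation \eqref{eq0209_02} available only for $\beta\in(0,1)$, now contains an oscillatory numerator $r\sin(\psi-\delta)+v\sin\psi$ together with a denominator vanishing only at $\delta\in\pi\bZ$. A careful choice of $\delta$ strictly between $\pi\beta/2$ and $\pi$ balances the decay from $e^{r^{1/\beta}\cos(\delta/\beta)}$ against the uniform positive lower bound on the denominator. Once these uniform-in-$v$ bounds on $E_{\beta,c}$ and its derivatives are secured, the remainder of the argument is a direct modification of the proof of Lemma \ref{lem0201}.
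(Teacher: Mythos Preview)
Your approach to \eqref{eq0220_01} and \eqref{eq0220_02} matches the paper's: mirror the proof of Lemma~\ref{lem0201}, replacing the representation \eqref{eq0209_02} (valid only for $\beta\in(0,1)$) by \eqref{eq0223_01} with $c=1$ and $c=2$, and exploit $\cos(\delta/\beta)<0$ for $\delta\in(\pi\beta/2,\pi]$ to control the $r$-integral. Your identification of the constraints $\kappa_1<\beta$ and $\kappa_2<\alpha$ is correct.

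For \eqref{eq0220_03} and \eqref{eq0220_04} you take a more laborious route than the paper. You propose verifying \eqref{eq0207_01} directly for the full symbols $\hat P_\beta\hat\Psi$ and $tE_{\beta,2}(-|\xi|^2t^\beta)\hat\Psi$, which requires uniform-in-$t$ control of all $\xi$-derivatives of $E_{\beta,c}(-|\xi|^2t^\beta)$ via differentiation under the integral in \eqref{eq0223_01}. The paper instead repeats the argument for \eqref{eq0131_01}: since $\hat\Psi$ itself satisfies \eqref{eq0207_01} with $\mathfrak D(\hat\Psi)=N(d)$, one has $\|(P_\beta*\Psi)*f\|_{L_{p,w_2}}\le N\|P_\beta*f\|_{L_{p,w_2}}$, and then the pointwise kernel bound \eqref{eq0209_03} (which holds for $\beta\in(0,2)$) gives $P_\beta*|f|\le N\mathcal M f$, so the weighted maximal theorem finishes \eqref{eq0220_03}. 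Then \eqref{eq0220_04} follows in one line from \eqref{eq0220_03} by $\widetilde P_\beta(t,\cdot)*\Psi*f=\int_0^t P_\beta(s,\cdot)*\Psi*f\,\mathrm ds$ and Minkowski. Your direct approach would work, but the paper's avoids the derivative bookkeeping on $E_{\beta,c}$ entirely for the low-frequency pieces.
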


\begin{proof}
The inequality \eqref{eq0220_01} can be obtained in the same way as in the proof of Lemma \ref{lem0201}.
The only difference is, since $\beta > 1$, we need to use the representation \eqref{eq0223_01} in Remark \ref{rmk_mittag} for two parametric Mittag-Leffler function $E_{\beta, 1}(= E_{\beta})$ instead of \eqref{eq0209_02}.
Note that when we investigate \eqref{eq0223_01}, the fact that $\cos(\delta/\beta) < 0$ for any $\delta \in (\frac{\pi \beta}{2}, \pi]$ is useful.
Similarly, we get \eqref{eq0220_02} with the help of the relation between the Fourier transform of $\widetilde{P}_{\beta}$ and two parametric Mittag-Leffler function $E_{\beta, 2}$, that is, \eqref{eq0223_02} in Remark \ref{rmk_mittag}.
We also get \eqref{eq0220_03} the exact same way as in the proof of \eqref{eq0131_01}.
Then \eqref{eq0220_04} directly follows from \eqref{eq0220_03}, since $\widetilde{P}_{\beta}(t, x) = \int_0^t P(s, x)\,\mathrm{d}s$. The lemma is proved.
\end{proof}

By following the proof of Theorem \ref{thm_subdiff} with the help of Lemma \ref{lem0220}, we prove Theorem \ref{thm_supdiff}.

\begin{proof}[\textbf{Proof of Theorem \ref{thm_supdiff}}]
We omit the proof since it is almost identical to the proof of Theorem \ref{thm_subdiff}, except we use Lemma \ref{lem0220} instead of Lemma \ref{lem0201}. We remark that we must have $\alpha > (1 + \mu)/q$ to be $\theta_1 \in (0, 1)$, but regarding $\theta_0$, we always have $\theta_0 \in (0, 1)$ for any $\alpha \in (0, 1)$ because $\mu < q-1$.
\end{proof}

\bibliographystyle{plain}

\def\cprime{$'$}

\end{document}